\newcommand{\nc}{\newcommand}
\nc{\les}{\lesssim}
\nc{\nit}{\noindent}
\nc{\nn}{\nonumber}
\nc{\D}{\partial}
\nc{\diff}[2]{\frac{d #1}{d #2}}
\nc{\diffn}[3]{\frac{d^{#3} #1}{d {#2}^{#3}}}
\nc{\pdiff}[2]{\frac{\partial #1}{\partial #2}}
\nc{\pdiffn}[3]{\frac{\partial^{#3} #1}{\partial{#2}^{#3}}}
\nc{\abs}[1] {\lvert #1 \rvert}
\nc{\cAc}{{\cal A}_c}
\nc{\cE}{{\cal E}}
\nc{\cF}{{\cal F}}
\nc{\cP}{{\cal P}}
\nc{\cV}{{\cal V}}
\nc{\cQ}{{\cal Q}}
\nc{\cGin}{{\cal G}_{\rm in}}
\nc{\cGout}{{\cal G}_{\rm out}}
\nc{\cO}{{\cal O}}
\nc{\Lav}{{\cal L}_{\rm av}}
\nc{\cL}{{\cal L}}
\nc{\cB}{{\cal B}}
\nc{\cZ}{{\cal Z}}
\nc{\cR}{{\cal R}}
\nc{\cT}{{\cal T}}
\nc{\cY}{{\cal Y}}
\nc{\cX}{{\cal X}}
\nc{\cXT}{{{\cal X}(T)}}
\nc{\cBT}{{{\cal B}(T)}}
\nc{\vD}{{\vec \mathcal{D}}}
\nc{\efield}{\mathcal{E}}
\nc{\vE}{{\vec \efield}}
\nc{\vB}{{\vec \mathcal{B}}}
\nc{\vH}{{\vec \mathcal{H}}}
\nc{\mR}{\mathcal R}
\nc{\mF}{\mathcal F}
\nc{\ty}{{\tilde y}}
\nc{\tu}{{\tilde u}}
\nc{\tV}{{\tilde V}}
\nc{\Pc}{{\bf P_c}}
\nc{\bx}{{\bf x}}
\nc{\bX}{{\bf X}}
\nc{\bXYZ}{{\bf XYZ}}
\nc{\bY}{{\bf Y}}
\nc{\bF}{{\bf F}}
\nc{\bS}{{\bf S}}
\nc{\dV}{{\delta V}}
\nc{\dE}{{\delta E}}
\nc{\TT}{{\Theta}}
\nc{\dPsi}{{\delta\Psi}}
\nc{\order}{{\cal O}}
\nc{\Rout}{R_{\rm out}}
\nc{\eplus}{e_+}
\nc{\eminus}{e_-}
\nc{\epm}{e_\pm}
\nc{\sgn}{\text{sgn}}
\nc{\eps}{\varepsilon}
\nc{\vnabla}{{\vec\nabla}}
\nc{\G}{\Gamma}
\nc{\w}{\omega}
\nc{\mh}{h}
\nc{\mg}{g}
\nc{\vphi}{\varphi}
\nc{\tlambda}{\tilde\lambda}
\nc{\be}{\begin{equation}}
	\nc{\ee}{\end{equation}}
\nc{\ba}{\begin{eqnarray}}
	\nc{\ea}{\end{eqnarray}}
	\nc{\nrm}{\|}
\nc{\g}{\gamma}
\nc{\ol}{\overline}
\newtheorem{theorem}{Theorem}[section]
\newtheorem{lemma}[theorem]{Lemma}
\newtheorem{prop}[theorem]{Proposition}
\nc{\pT}{\partial_T}
\nc{\pz}{\partial_z}
\nc{\pt}{\partial_t}
\nc{\la}{\langle}
\nc{\ra}{\rangle}
\nc{\infint}{\int_{-\infty}^{\infty}}
\nc{\halfwidth}{6.5cm}
\nc{\figwidth}{10cm}
\newcommand{\f}{\frac}
\nc{\nlayers}{L} \nc{\nsectors}{M}
\nc{\indicator}{\mathbf{1}}
\nc{\Rhole}{R_{\rm hole}}
\nc{\Rring}{R_{\rm ring}}
\nc{\neff}{n_{\rm eff}}
\nc{\Frem}{F_{\rm rem}}
\nc{\R}{\mathbb R}
\nc{\mJ}{\mathcal J}
\nc{\C}{\mathbb C}
\nc{\Z}{\mathbb Z}
\nc{\N}{\mathbb N}
\nc{\DD}{\Delta}
\nc{\cD}{\mathcal D}
\nc{\lnorm}{\left\|}
\nc{\rnorm}{\right\|}
\nc{\rnormp}{\right\|_{\ell^{p,\eps}}}
\nc{\rar}{\rightarrow} 
\nc{\argz}{\text{arg}\,z}
\def\norm[#1][#2]{\|#1\|_{#2}}
\theoremstyle{remark}
\begin{document}
	
	\begin{abstract}
		
		We prove dispersive bounds for fractional Schr\"odinger operators on $\mathbb R^n$ of the form $H=(-\Delta)^{\alpha}+V$ with $V$ a real-valued, decaying potential and $\alpha \notin\mathbb N$.  We derive pointwise bounds on the resolvent operators for all $0<\alpha<\frac{n}{2}$, a quantitative limiting absorption principle for $\f12<\alpha<\frac{n}{2}$, and establish global dispersive estimates in dimension $n\geq 2$ for the range $\frac{n+1}{4}\leq \alpha <\frac{n}2$.  
		
	\end{abstract}

	\title[Dispersive estimates for fractional  Schr\"odinger operators]{\textit{Dispersive estimates for fractional order Schr\"odinger operators} } 
	
	\author[Erdo\smash{\u{g}}an, Goldberg, Green]{M. Burak Erdo\smash{\u{g}}an, Michael Goldberg and William~R. Green}
	\thanks{  The first author was partially supported by the NSF grant  DMS-2154031 and Simons Foundation Grant 634269.  The second author is partially supported by Simons Foundation
		Grant 635369. The third author is partially supported by Simons Foundation
		Grant 511825. }
	\address{Department of Mathematics \\
		University of Illinois \\
		Urbana, IL 61801, U.S.A.}
	\email{berdogan@illinois.edu}
	\address{Department of Mathematics\\
		University of Cincinnati \\
		Cincinnati, OH 45221 U.S.A.}
	\email{goldbeml@ucmail.uc.edu}
	\address{Department of Mathematics\\
		Rose-Hulman Institute of Technology \\
		Terre Haute, IN 47803, U.S.A.}
	\email{green@rose-hulman.edu}

	\maketitle

	\section{Introduction}

The Schr\"odinger equation models how a quantum particle evolves in time.
A key feature of the free equation is dispersion: different frequency
components of the initial data (wave) propagate with different velocities,
so constructive interference from their interaction is always temporary. A wave packet built from a range of smilar frequencies will spread out and decrease in overall amplitude. On unbounded
domains such as $\mathbb{R}^n$, this causes the solution to decay in time
and to become smoother in an average sense. This decay estimate is not
just a fact about the linear equation; it is an important tool used to
control nonlinear equations.   For nonlinearities controlled by a monomial $|u|^{p-1}u$ with $p>1$,
the nonlinear terms
eventually become small relative to the linear terms because of this
decay.

Adding a background potential complicates the picture. The perturbed operator can
have a discrete spectrum, corresponding to bound states that oscillate in place without decay, and a
continuous spectrum, corresponding to states that scatter. The natural
question, studied since 
1970s, is whether the part of the evolution on the continuous spectrum
still satisfies the same dispersive bound as the free equation. Proving
this requires detailed estimates on the resolvent of the operator, both
near the bottom of the spectrum and at high energy, since the dispersive
bound is obtained from an oscillatory integral built from the resolvent.
This program is well understood for the classical (second order)
Schr\"odinger operator $-\Delta +V$, and has more recently been extended to higher
order operators such as $(-\Delta)^m + V$ for integer $m$.

This paper studies the case where the order is a non-integer real number
$\alpha$. Such fractional order operators arise in several settings: as
continuum limits of long-range lattice interactions, in models of
optical systems with fractional dispersion, and in Laskin's theory of
fractional quantum mechanics. The extension from integer to non-integer
order is not routine. When $m$ is an integer, there is an explicit
algebraic identity that writes the $m$th order resolvent as a finite sum
of classical Schr\"odinger resolvents, and the existing higher order
theory relies heavily on this identity. No such finite identity exists
for non-integer $\alpha$, so the resolvent must be analyzed directly. It turns out to have features with no integer order analogue: its low
energy expansion is more intricate than in the integer order case,
since it involves two power series on different scales, one in integer
powers of $\lambda$ and one in powers of $\lambda^{1/\alpha}$. The main results of this paper establish the expected
$L^1 \to L^\infty$ dispersive bound for fractional order Schr\"odinger
operators in a natural range of $\alpha$ and $n$, together with resolvent
expansions that should be of independent use.

We now turn to a more precise account of the problem. We  study properties of solutions to the family of Schr\"odinger equations on $(x,t)\in \mathbb R^{n}\times \mathbb R$:
	\begin{align}\label{eqn:schro pde}
		iu_t+H_\alpha u=0, \qquad H_\alpha:=(-\Delta)^\alpha+V,
	\end{align}
	where $V(x)$ is a bounded and decaying potential and $0<\alpha<\frac{n}2$ need not be an integer. When $\alpha=1$, this is the classical Schr\"odinger operator that arises in quantum mechanics to describe how the wave function of a quantum particle evolves in the presence of an external potential. For general $\alpha$, the fractional Laplacian $(-\Delta)^{\alpha}$ is defined as a Fourier multiplier operator with symbol $|\xi|^{2\alpha}$.  It behaves as one might expect with regard to the smoothness of functions but has non-local action when $\alpha$ is not an integer. For smooth compactly supported functions $f$ and an integer $m > \alpha$, there is an explicit representation
\[
(-\Delta)^\alpha f(x) = c_{m,\alpha} (-\Delta)^m \int_{\R^n} |x-y|^{2(m-\alpha) - n} f(y) dy.
\]
where the constant $c_{m,\alpha}$ is a ratio of certain Gamma function values.

Most dynamical behavior of solutions of \eqref{eqn:schro pde} can be traced back to properties of the resolvent of $H$, which itself is a perturbation of the resolvent of $(-\Delta)^\alpha$. We derive asymptotic descriptions of the resolvent of $(-\Delta)^\alpha$ in both low and high energy regimes. From these we obtain dispersive $L^1 \to L^\infty$ bounds for solutions of \eqref{eqn:schro pde} which are orthogonal to the finite dimensional space of bound states. In addition, the resolvent expansions we obtain are useful in the study of $L^p$-boundedness of the wave operators, which we establish in a companion paper \cite{EGGfracwave}.

	 Dispersive equations of this type arise in modeling diverse wave phenomena in physics and applied mathematics, including applications to  {\it nonlinear optics, water wave theory, anharmonic lattices, elastic rods, plasma physics and macroscopic ferromagnetic continua},   \cite{SulemSulem}.  In particular, fractional Schr\"odinger  (FS) equations have attracted attention in the physics literature, see for example \cite{Laskin,Laskin2}, where Laskin proposes a theory of fractional Quantum Mechanics.
	Posed on the real line, the fractional cubic non-linear Schr\"odinger (NLS) equation has appeared   in many recent articles.  A rigorous derivation of the equation can be found in \cite{kay} starting from a family of models describing charge transport in
bio polymers like  DNA. The starting point is a discrete NLS
  equation with general lattice interactions. A fractional cubic nonlinear Schr\"odinger equation with $\alpha\in(\frac12,1)$ appears as the continuum limit of the long-range interactions between quantum particles on the lattice. Whereas  allowing only the short-range interactions the authors obtain the standard cubic NLS. 
  In optics, the FS equation was first proposed by Longhi in 2015 for transverse light dynamics in optical cavities \cite{Longhi}, with the first experimental realization achieved in 2023 using programmable holograms to emulate fractional group-velocity dispersion in fiber optics \cite{LZMK}. These systems support various soliton families including gap solitons \cite{HD}, vortex solitons  \cite{XTHD}, and self-bound modes with competing cubic-quintic nonlinearities \cite{LMM}, where spontaneous symmetry breaking bifurcations exhibit unique features depending on the fractional power of the Laplacian.  Finally, see \cite{KC} for a collection of articles on the properties and applications of fractional dispersive equations. 

Recall that the solution of  the  free Schr\"{o}dinger equation ($\alpha=1, V=0$) on $\mathbb{R}^n$,
$$ e^{-it\Delta}f(x) = C_n \frac{1}{t^{n/2}}\int_{\mathbb{R}^n} e^{-i|x-y|^2/4t}f(y)\, dy,$$
satisfies three basic dispersive estimates:
\begin{enumerate}[i)]
\item $L^1\to L^\infty$ estimates:
$$\|e^{-it\Delta}f\|_{L^\infty(\mathbb{R}^n)} \lesssim t^{-n/2} \|f\|_{L^1(\mathbb{R}^n)},$$
\item Strichartz estimates:
$$\|e^{-it\Delta}f\|_{L_t^q(L_x^p)}\lesssim \|f\|_{L^2(\mathbb{R}^n)}, \quad \frac{2}{q}+\frac{n}{p}=\frac{n}{2}, \quad 2\leq p,q \leq \infty,\quad (n,q)\neq (2,2),$$
\item Kato smoothing estimates: For $\beta>\frac12$,
$$\big\|(1+|x|)^{-\beta} |\nabla|^{1/2}e^{-it\Delta} f\big\|_{L^2_{x,t}} \lesssim \|f\|_{L^2(\mathbb{R}^n)}.$$
\end{enumerate}
The three estimates are closely interrelated. While the $L^1\to L^\infty$ bound limits the amplitude of a wave function at one snapshot in time, Strichartz inequalities give a holistic bound of how large the amplitude can be over what volume of space, for how long.  Kato smoothing occurs because the higher-frequency (i.e. less smooth) part of a  solution moves with faster group velocity, hence it spends less time in any given bounded set.
 
Strichartz estimates were first obtained by Strichartz \cite{strichartz} via Fourier restriction theory of Stein. They can also be obtained using $L^1\to L^\infty$ estimates (see \cite{GVs}, \cite{keetao}), or (for perturbed equations) using Kato smoothing estimates \cite{RodSch}.
	
		Each of these bounds extend, with some modification, to the perturbed case when $V\not\equiv 0$ given appropriate restrictions on the potential.  We focus on the $L^1\to L^\infty$ bounds.  To illustrate we consider the perturbed Schr\"odinger operator $H_1=-\Delta+V $  on $\R^n$, where $V$ is a real-valued   potential satisfying $|V(x)|\leq C (1+|x|)^{-\beta}$ for some $\beta > 1$.  In this case, the spectrum of $H_1$ is composed of the absolutely continuous spectrum $[0,\infty)$ and finitely many non-positive eigenvalues (bound states), \cite{RS1}.  
		With $\lambda_j$ an eigenvalue of $H_1$, $P_j$ a projection onto the eigenspace associated to $\lambda_j$, and $P_{ac}$ the projection onto the absolutely continuous spectrum, we have
	\begin{align}\label{eqn:proj espace}
		e^{itH_1}f(x)=\sum_{j=1}^N e^{it\lambda_j}P_jf(x)+e^{itH_1}P_{ac}(H_1)f(x).
	\end{align}
	Since $|e^{it\lambda_j}| = 1$ does not decay at $t \to \infty$, the evolution $e^{itH_1}$ can not satisfy any of the dispersive estimates listed above unless the number of bound states, $N$, happens to be zero. We now turn to the remaining component involving the projection onto the absolutely continuous spectrum. For bounded, sufficiently decaying and smooth  real-valued potentials $V$, the time decay of $e^{itH_1}P_{ac}(H_1)$ is well studied. In general, it satisfies the same dispersive bound as the free propagator:
\be\label{eq:global} \| e^{itH_1}P_{ac}(H_1) \|_{L^1 \to L^\infty} \lesssim |t|^{-\frac{n}{2}}.
\ee
Interpolating this bound with the $L^2$ conservation law yields a family of $L^p \to L^{p'}$ estimates. These, combined with a $T^*T$ argument, lead to Strichartz estimates for the linear propagator as mentioned above. Such estimates are crucial in analyzing the stability of soliton or standing wave solutions to nonlinear Schr\"odinger equations.

 It is common to consider $e^{itH_1}P_{ac}(H_1)$ as an element of the functional calculus of the operator $H_1$. By Stone's formula, we have  
\begin{align}\label{eqn:Pac H1}
    e^{itH_1}P_{\mathrm{ac}}(H_1) = \int_{0}^\infty e^{it\lambda} \, d\mu_1(\lambda).
\end{align} 
The spectral measure, $\mu_1$,  can be constructed in terms of the resolvent operators. For any $z\in \C\setminus \R$,   
  we define the resolvent operator by
$$ R_V(z) = (H_1 - zI)^{-1},$$ 
where $I$ denotes the identity operator.
One studies the perturbed resolvent operators $R_V$  as perturbations of the free resolvent operator,  whose convolution kernel is given explicitly in terms of Hankel functions:
	$$
		R_0(z)(x,y)=\frac{i}{4}\bigg( \frac{z^{\f12}}{2\pi |x-y|} \bigg)^{\frac{n}2-1} H_{\frac{n}2}^{(1)} (z^{\f12}|x-y|).
	$$
	This allows for closed form expansions when $n$ is odd, while when $n$ is even one has logarithmic singularities that complicate the process.
	
  For $\lambda \in [0,\infty)$, we define the limiting resolvents by
$$R_V^\pm(\lambda) = \lim_{\epsilon \to 0^+} (H_1 - (\lambda \pm i\epsilon)I)^{-1}.
$$
These limits exist as operators between certain weighted $L^2$ spaces by the limiting absorption principle of Agmon \cite{agmon}. 
Each limiting resolvent possesses an integral kernel so its action may also be understood by 
$$
R_V^\pm(\lambda) f(x)=\int_{\R^n} R_V^\pm(\lambda)(x,y) f(y)\, dy.
$$
The spectral measure $\mu_1$   can be written in terms of the limiting resolvents as
$$ d\mu_1(\lambda)(x,y) = \frac{1}{2\pi i} \big( R_V^+(\lambda)(x,y) - R_V^-(\lambda)(x,y) \big) \, d\lambda.
$$
The linear propagator $e^{itH_1}P_{\mathrm{ac}}(H_1)(x,y)$ is then recovered from the Stone formula \eqref{eqn:Pac H1}.
For the $L^1\to L^\infty$ bounds it suffices to prove the following pointwise bound on the integral kernel
$$
\sup_{x,y\in \mathbb R^n} \bigg|\frac{1}{2\pi i} \int_0^\infty e^{it\lambda} \big( R_V^+(\lambda) - R_V^-(\lambda) \big)(x,y)\, d\lambda\bigg|\leq C_n |t|^{-\frac{n}{2}}.
$$

Typically, dispersive decay estimates  are proven by developing detailed expansions of the resolvent and its derivatives near the threshold $\lambda = 0$ (low energy) and for large $\lambda$ (high energy).  It is crucial to have 	detailed information on the resolvent operators and their derivatives, while also capturing cancellation in the difference between the limiting resolvents near the threshold, $\lambda=0$. 
 These expansions reduce the problem to oscillatory integral estimates. We assume that the threshold energy is neither an eigenvalue nor a resonance, to avoid a finite-rank component of the evolution with slower decay. The high-energy analysis generally requires some smoothness of the potential in higher dimensions $n \geq 4$. 	In addition,  the fact that the classical Schr\"odinger operator has no  positive eigenvalues is of great importance.
These types of bounds were first investigated by Journe, Soffer, and Sogge \cite{JSS}, and have their roots in time-decay estimates between weighted $L^2$ spaces due to Rauch, Jensen, Kato, and Murata \cite{Rau,JenKat,Jen,Mur}. For a comprehensive history and further references, see the survey by Schlag \cite{SchlagSurvey}.

	Recent work, for example \cite{fsy,GT4,egt,soffernew,EGGHighOrder,LSY,cheng1,cheng2}, has extended the analysis from the classical $\alpha=1$ Schr\"odinger operator to higher integer order operators $(-\Delta)^m+V$ where $m\in \mathbb N$.  Such operators provide a model of ``high dispersion" phenomena and have applications in optical phenomena such as the propagation of laser beams, \cite{K,KS}.   The analysis of these operators relies on   the spectral method described above; developing resolvent expansions and reducing to oscillatory integrals.
	
	In this case, one utilizes the splitting identity which relates the integer order resolvent operators to the classical, $\alpha=1$, Schr\"odinger resolvents (c.f. \cite{soffernew}):   for $z\in\C\setminus[0,\infty)$, 
	\be\label{eqn:Resol}
	\mR_0(z)(x,y):=((-\Delta)^m -z)^{-1}(x,y)=\frac{1}{ mz^{1-\frac1m} }
	\sum_{\ell=0}^{m-1} \omega_\ell R_0 ( \omega_\ell z^{\frac1m})(x,y)
	\ee
	where $\omega_\ell=\exp(i2\pi \ell/m)$ are the $m^{th}$ roots of unity, $R_0(z)=(-\Delta-z)^{-1}$ is the usual ($2^{nd}$ order) Schr\"odinger resolvent.   
	This follows by applying the algebraic identity
	$$
		\frac{1}{a^{2m}-b^{2m}}=\frac{1}{\prod_{\ell=0}^{m-1} (a^2-\omega_\ell b^2)} 
		=\frac{1}{mb^{2m-2}}\sum_{\ell=0}^{m-1}\frac{\omega_\ell}{a^2-\omega_\ell b^2}
	$$
	on the Fourier side. 
	
	This identity allows one to develop appropriate expansions for the resolvent operators by leveraging the more well-known $m=1$ resolvent operators.  Bounds on the derivatives of the resolvent operators and any cancellation between the limiting operators is vital in the low energy analysis while decay in the spectral parameter $\lambda$ is needed in the analysis of the high energy.  As in the classical case, one can reduce to an oscillatory integral with respect to the spectral measure corresponding to the difference of the limiting perturbed resolvent operators and obtain a natural decay rate of $|t|^{-n/(2m)}$,  see \cite{fsy,soffernew,GT4,egt,cheng1,cheng2} for example.
	In the higher integer order case, one must add the assumption that $H_m$ has no positive eigenvalues.  It can be shown that, even for a compactly supported, smooth potential $V$, that $H_m$ can have positive eigenvalues, while for $H_1$ sufficient decay on the potential rules out positive eigenvalues.

	When considering the non-integer order case, there is no hope of developing a finite term analogue of the splitting identity, \eqref{eqn:Resol}, that is crucial for the analysis of the higher integer order analysis.  This, along with the non-local nature of the fractional order operators, provides significant technical challenges.  Our main technical achievement is to develop detailed expansions on the resolvent operators, and their derivatives, in the non-integer order case in Section~\ref{sec:res} below.  From these expansions, we deduce properties of the spectral measure $d\mu_\alpha$ associated to an operator $H_\alpha$ and time decay estimates that generalize \eqref{eq:global}.

	\subsection{Main Results}
	We now consider the non-integer order Schr\"odinger operators $H=(-\Delta)^\alpha +V$, and no longer include a subscript.
	We assume that the potential does not introduce any embedded eigenvalues of positive energy, and that zero is a regular point of the spectrum for both $H$ and $(-\Delta)^\alpha$ (the last assumption being satisfied precisely when $2\alpha<n$).  Under these assumptions we show that the perturbed evolution $e^{itH}$ satisfies the same $L^1 \to L^\infty$ bounds as that of the free fractional Laplacian $e^{it(-\Delta)^\alpha}$ once any bound states are projected away.  

	The free propagator kernel is the Fourier transform in $\R^n$ of the complex exponential function $e^{it|\xi|^{2\alpha}}$.  By a scaling argument, it should have size $|t|^{-n/(2\alpha)}$ provided the Fourier transform of $e^{i|\xi|^{2\alpha}}$ is bounded. A stationary phase estimate shows that this occurs whenever $\alpha \geq 1$.  Thus when $\alpha \geq 1$ we seek a bound of the form
	$$
		\|e^{itH} P_{ac}(H)\|_{L^1\to L^\infty} \les |t|^{-\frac{n}{2\alpha}}.
	$$
	Here $P_{ac}$ is projection onto the absolutely continuous spectrum of $H$.

	For $\frac12 < \alpha < 1$, the convolution kernel of $e^{i(-\Delta)^\alpha}$ grows along with $|x-y|$ and oscillates, so smoothing of some order is needed in order to obtain a uniform bound.  More specifically, the Fourier transform of $e^{i|\xi|^{2\alpha}}|\xi|^{\gamma - n}$ is bounded  when $\frac12 < \alpha < 1$ and $0  < \gamma \leq n\alpha$.  The minimum amount of homogeneous smoothing required is of order $n(1-\alpha)$, and in that case the scaling considerations yield that $e^{it(-\Delta)^{\alpha}}(-\Delta)^{n(\alpha - 1)/2}$ satisfies an $L^1 \to L^\infty$ bound with size $|t|^{-\frac{n}{2}}$.  In the cases where $\frac12 < \alpha < 1$ we seek a bound of the form
	$$
		\|e^{itH}H^{\frac{n}{2}(1-\frac1\alpha)} P_{ac}(H)\|_{L^1\to L^\infty} \les |t|^{-\frac{n}{2}}.
	$$
Smoothing to higher order (i.e. choosing $\gamma < n\alpha$) results in a slower rate of time decay of the linear propagator.

The range of $\alpha$ and $n$ in our argument is governed by two considerations.  The assumption above that zero is a regular point of the spectrum of $(-\Delta)^\alpha$ is true when $0 < 2\alpha < n$ and false otherwise.  We also rely on a uniform bound for the kernel of the free resolvents $(-\Delta)^\alpha - (\lambda + i0))^{-1}$ for large $\lambda$. This bound will only be true when $\alpha \geq \frac{n+1}{4}$.  Consequently our results are stated for fractional Laplacian operators in the range $\frac{n+1}{4} \leq \alpha < \frac{n}{2}$.  The range is empty in one dimension, consists of the interval $[\frac34, 1)$ in two dimensions, and is contained in the half-line $\alpha \geq 1$ in all dimensions $n \geq 3$.

	Our main result(s) are
	
	\begin{theorem}\label{thm:main2}
		
		In dimension $n=2$, fix $\frac34\leq\alpha<1$.  Assume that $|V(x)|\les \la x\ra^{-\beta}$ for some $\beta>4$ and that $H$ has no embedded eigenvalues and zero is a regular point of the spectrum.  Then
		$$
			\|e^{itH}H^{1-\frac1\alpha} P_{ac}(H)\|_{L^1\to L^\infty} \les |t|^{-1}.
		$$
		
	\end{theorem}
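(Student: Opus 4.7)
The plan is to combine Stone's formula with a resolvent expansion and a high/low energy split of the spectral integral. Writing $H=(-\Delta)^\alpha+V$ and using the spectral representation on the absolutely continuous subspace, I would start from
$$
e^{itH}H^{1-\f1\alpha}P_{ac}(H)=\f{1}{2\pi i}\int_0^\infty e^{it\lambda}\,\lambda^{1-\f1\alpha}\bigl[R_V^+(\lambda)-R_V^-(\lambda)\bigr]\,d\lambda,
$$
where $R_V^\pm(\lambda)$ are the boundary values of the perturbed resolvent. Introducing a smooth cutoff $\chi(\lambda)$ supported in a neighborhood of zero, the task reduces to bounding the operator kernels of the low- and high-energy portions, tested against $f,g\in L^1(\R^2)$, by $C|t|^{-1}\|f\|_1\|g\|_1$.

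For high energies I would expand the perturbed resolvent in a finite Born series $R_V^\pm=\sum_{k=0}^{K}(-1)^k R_0^\pm(VR_0^\pm)^k+\text{remainder}$ and handle each term separately. Each term becomes an iterated oscillatory integral in position variables with the $\la x_j\ra^{-\beta}$ weights provided by $V$; the pointwise bound on the free resolvent kernel established earlier in the paper (valid exactly because $\alpha\ge\f{n+1}{4}=\f34$) controls both the kernel and its first $\lambda$-derivative uniformly in $\lambda$. A single integration by parts in $\lambda$ against $e^{it\lambda}$ yields the required $|t|^{-1}$ decay, and the weight $\lambda^{1-1/\alpha}$, being a \emph{negative} power at infinity, only helps the convergence. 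The tail from the Born remainder is absorbed via the quantitative limiting absorption principle available for $\f12<\alpha<\f{n}{2}$.

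For low energies I would use the symmetric resolvent identity
$$
R_V^\pm(\lambda)=R_0^\pm(\lambda)-R_0^\pm(\lambda)\,v\,\bigl(M^\pm(\lambda)\bigr)^{-1}v\,R_0^\pm(\lambda),\qquad v=|V|^{\f12},\ M^\pm(\lambda)=U+vR_0^\pm(\lambda)v,
$$
with $U=\sgn V$. The regularity of zero guarantees that $M^\pm(\lambda)^{-1}$ is bounded on a suitable weighted space as $\lambda\to 0^+$. I would then develop an asymptotic expansion of $R_0^\pm(\lambda)$ near $\lambda=0$ (for the two-dimensional fractional Laplacian this mixes non-integer powers of $\lambda$ with logarithmic terms) and of $M^\pm(\lambda)^{-1}$, and insert these into Stone's formula. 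Because $\lambda^{1-1/\alpha}$ is \emph{singular} at the origin, one must verify that the leading pieces of the expansion, combined with the smooth cutoff and the outer integration, produce sufficient cancellation so that one or two integrations by parts in $\lambda$ still yield the $|t|^{-1}$ bound.

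The hardest step will be the low-energy analysis. In two dimensions the free kernel of $((-\Delta)^\alpha-(\lambda\pm i0))^{-1}$ carries logarithms and $\lambda$-powers whose order depends on $\alpha$, and these have to be paired against the singular weight $\lambda^{1-1/\alpha}$ without losing the differentiability needed for integration by parts in the Stone integral. The regularity-of-zero hypothesis must be used to extract and cancel the most singular contribution of $M^\pm(\lambda)^{-1}$, and the polynomial decay rate $\beta>4$ must be seen to match the number of expansion terms required; striking that balance, and doing so uniformly in the parameter range $\f34\le\alpha<1$ where the nature of the expansion shifts, is the main technical hurdle I would expect to confront.
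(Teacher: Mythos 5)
Your overall architecture (Stone's formula, high/low energy split, finite Born series plus the limiting absorption principle at high energy, symmetric resolvent identity with invertibility of $M^\pm(\lambda)$ at low energy) coincides with the paper's. But there is a genuine gap at the heart of the high-energy argument: you treat the free resolvent kernels as amplitudes whose first $\lambda$-derivative is "controlled uniformly in $\lambda$" and claim that a single integration by parts against $e^{it\lambda}$ gives $|t|^{-1}$. The pointwise bounds of Proposition~\ref{prop:rep} control the function $F$ only \emph{after} the oscillatory factor $e^{i\lambda r}$ has been extracted: the full kernel $\mR_0^\pm(\lambda^{2\alpha})(x,y)=e^{i\lambda r}F(\lambda r)r^{2\alpha-2}$ has $\lambda$-derivative of size $r\,|F|$, and in the Born series the first and last legs carry $r_1=|x-z_1|$, $r_{k+1}=|z_k-y|$ with $x,y$ free (one takes a supremum over them), so these factors cannot be absorbed by the decay of $V$. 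Consequently the integration by parts must be performed against the \emph{combined} phase $e^{it\lambda^{2\alpha}+i\lambda R}$ (with $R=\sum_j r_j$, after the change of variable $\lambda\mapsto\lambda^{2\alpha}$), and for one sign of $t$ this phase has a critical point $\lambda_0=(-R/(2\alpha t))^{1/(2\alpha-1)}$ where no integration by parts is possible. The paper's proof splits into $\lambda\not\sim\lambda_0$, where $|\phi'(\lambda)|\gtrsim|t|\lambda^{2\alpha-1}$ makes one integration by parts work, and $\lambda\sim\lambda_0$, where Van der Corput gives only $|t|^{-1/2}$ and the full $|t|^{-1}$ is recovered by trading the surplus against $\lambda_0\gtrsim1$, $|t|\les R$, and a factor $r_{j_0}^{-1/2}$ extracted from the largest leg (and, for the Born tail, against weights $\la x\ra^{-1/2}\la y\ra^{-1/2}$ obtained by factoring out $e^{-i\lambda|x|}e^{-i\lambda|y|}$ and bounding $|x|-|x-x_1|$ by $\la x_1\ra$). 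None of this stationary-phase bookkeeping appears in your plan, and without it the claimed $|t|^{-1}$ bound does not follow.

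The same omission recurs at low energy: after inserting the symmetric resolvent identity one is left with integrals carrying the phase $e^{it\lambda^{2\alpha}+i\lambda(|x|\mp|y|)}$, which again has a critical point that can lie in the support of the cutoff, so "one or two integrations by parts" cannot close the estimate by themselves. The paper's Proposition~\ref{prop:low tail1} needs the quantitative derivative bounds $|a_{x,y}(\lambda)|\les\min(1,\lambda^{-1/2}(\la x\ra+\la y\ra)^{-1/2})$ and $|\partial_\lambda a_{x,y}|\les\lambda^{-3/2}(\la x\ra+\la y\ra)^{-1/2}$ (obtained from the improved small-argument bounds \eqref{Fbounds2} and the $L^2$ bounds on $M_\pm^{-1}$, $\partial_\lambda M_\pm^{-1}$, and $\lambda^{2\alpha-2}[M_+^{-1}-M_-^{-1}]$ of Lemma~\ref{lem:Minv 2}), precisely so that the Van der Corput contribution near $\lambda_0$ can be converted into $|t|^{-1}$ using $\lambda_0^{2\alpha-1}\sim(|x|\mp|y|)/|t|$. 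Your outline correctly identifies the low-energy regime as delicate, but the missing mechanism in both regimes is the same: the spatial oscillation of the resolvent kernels must be carried in the phase and handled by a stationary-phase argument, not absorbed into the amplitude.
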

	
	The two-dimensional result requires a certain amount of smoothing as discussed above.  In dimensions $n\geq 3$, we prove global bounds of the form

	\begin{theorem}\label{thm:main n}
		
		In dimensions $n\geq 3$, fix $\frac{n+1}4\leq\alpha<\frac{n}2$.  Assume that $|V(x)|\les \la x\ra^{-\beta}$ for some $\beta>n+4$ and that $H$ has no embedded eigenvalues and zero is a regular point of the spectrum.  Then
		$$
		\|e^{itH} P_{ac}(H)\|_{L^1\to L^\infty} \les |t|^{-\frac{n}{2\alpha}}.
		$$
		
	\end{theorem}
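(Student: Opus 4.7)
\textbf{Proof sketch for Theorem \ref{thm:main n}.}  The plan is the standard approach via the Stone formula combined with a resolvent expansion, split into low and high energy regimes.  Writing $R_V^\pm(\lambda)=(H-(\lambda\pm i0))^{-1}$ and $R_0^\pm(\lambda)=((-\Delta)^\alpha-(\lambda\pm i0))^{-1}$, Stone's formula gives
$$
e^{itH}P_{ac}(H) = \frac{1}{2\pi i}\int_0^\infty e^{it\lambda}\bigl[R_V^+(\lambda)-R_V^-(\lambda)\bigr]\,d\lambda.
$$
Choose a small $\lambda_0>0$ and a smooth cutoff $\chi$ with $\chi\equiv 1$ on $[0,\lambda_0]$ and supported in $[0,2\lambda_0]$; write the integral as $I_{\rm low}+I_{\rm high}$ corresponding to the factors $\chi(\lambda)$ and $1-\chi(\lambda)$.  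The goal is to prove each piece has kernel bounded by $|t|^{-n/(2\alpha)}$ uniformly in $x,y$.

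For the high-energy piece $I_{\rm high}$, expand via a finite Born series
$$
R_V^\pm(\lambda)=\sum_{k=0}^{K}(-1)^k R_0^\pm(\lambda)(VR_0^\pm(\lambda))^k+(-1)^{K+1}R_0^\pm(\lambda)(VR_0^\pm(\lambda))^K V R_V^\pm(\lambda) V R_0^\pm(\lambda),
$$
where $K$ is chosen large enough (in terms of $\beta$ and $n$) that the tail is absolutely integrable in the spatial variables.  Each finite-order term reduces to an iterated spatial integral in which the $\lambda$-dependence appears through $(k{+}1)$ free resolvent kernels.  The pointwise bounds on $R_0^\pm(\lambda)$ stated earlier in the paper (valid precisely because $\alpha\geq\frac{n+1}{4}$) give expressions of the form $a(\lambda;x-y)e^{\pm i\lambda^{1/(2\alpha)}|x-y|}$ with controlled amplitude and derivatives.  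One then applies stationary phase (or repeated integration by parts) to the scalar oscillatory integral
$$
\int_0^\infty e^{it\lambda\pm i\lambda^{1/(2\alpha)}\sum_{j=0}^{k}|x_j-x_{j+1}|}(1-\chi(\lambda))\,\tilde a(\lambda)\,d\lambda,
$$
obtaining decay $|t|^{-n/(2\alpha)}$ together with spatial weights $\prod_j V(x_j)\langle x_j-x_{j+1}\rangle^{-N_j}$ that are integrable under the hypothesis $\beta>n+4$.  The Born tail is handled by inserting the limiting absorption principle of the previous section, so that the factor $V R_V^\pm V$ acts boundedly between suitable weighted $L^2$ spaces, and then repeating the oscillatory analysis on the remaining $R_0^\pm$ factors.

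For the low-energy piece $I_{\rm low}$, use the symmetric resolvent identity with $v=|V|^{1/2}$, $U=\operatorname{sgn}(V)$, and $M^\pm(\lambda)=U+vR_0^\pm(\lambda)v$, namely
$$
R_V^\pm(\lambda)=R_0^\pm(\lambda)-R_0^\pm(\lambda)\,v\,M^\pm(\lambda)^{-1}\,v\,R_0^\pm(\lambda).
$$
The assumption that $0$ is regular for $H$ is precisely the statement that $M^\pm(0)$ is invertible on the appropriate subspace, and combined with continuity of $\lambda\mapsto M^\pm(\lambda)$ near $0$ (which follows from the same pointwise resolvent bounds) gives an absolutely convergent Neumann expansion for $M^\pm(\lambda)^{-1}$ on $[0,2\lambda_0]$.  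The free piece $R_0^+-R_0^-$ is just the spectral density of $(-\Delta)^\alpha$ and contributes the desired $|t|^{-n/(2\alpha)}$ bound by the original free estimate; the correction term reduces once more to an oscillatory $\lambda$-integral with amplitude controlled by $vR_0^\pm$ kernels and their derivatives, to which integration by parts applies (the spatial weights $v(x_j)\langle x_j\rangle^{s}$ are absorbed by the decay of $V$).

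The main technical obstacle will be the high-energy analysis: one needs uniform-in-$k$ control of the iterated spatial integrals so that the Born series converges at the kernel level, which requires distributing the spatial decay of $V$ against the (non-integrable near infinity) free resolvent kernels in a summable way.  This is where the restriction $\alpha\geq\frac{n+1}{4}$ is essential, since only then does $R_0^\pm(\lambda)$ satisfy the uniform large-$\lambda$ kernel bound, and where the decay assumption $\beta>n+4$ is used to close the estimate after extracting $n/(2\alpha)$ derivatives in $\lambda$ via integration by parts.  Once the kernel bound is in place for each Born term and the Neumann series for $M^\pm(\lambda)^{-1}$, summing the geometric series finishes the proof.
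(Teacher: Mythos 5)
Your overall architecture matches the paper's: Stone's formula, a high/low energy split, a finite Born series with the limiting absorption principle for the tail at high energy, and the symmetric resolvent identity with a Neumann expansion of $M^{\pm}(\lambda)^{-1}$ at low energy, with stationary phase/integration by parts driving the time decay. However, the low-energy part of your sketch has a genuine gap. Handling the correction term $\mathcal{R}_0^{\pm}vM^{\pm}(\lambda)^{-1}v\mathcal{R}_0^{\pm}$ by ``integration by parts'' alone cannot produce the rate $|t|^{-n/(2\alpha)}$: since $n>2\alpha$, this rate exceeds $|t|^{-1}$, and a bounded amplitude integrated against $e^{it\lambda^{2\alpha}}\lambda^{2\alpha-1}$ on $(0,\lambda_0)$ only yields $|t|^{-1}$ after one integration by parts. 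The extra smallness $\lambda^{n-2\alpha}$ near $\lambda=0$ must come from the $+/-$ cancellation: one has to decompose $\mathcal{R}_V^+-\mathcal{R}_V^-$ so that every term contains either $\mathcal{R}_0^+-\mathcal{R}_0^-$ or $M_+^{-1}-M_-^{-1}$, and then prove that $M_+^{-1}-M_-^{-1}=O(\lambda^{n-2\alpha})$ with corresponding derivative bounds (the paper's Lemma on $M^{-1}_{\pm}$). That estimate is precisely where the heavy decay $\beta>n+4$ enters; in your sketch you attribute $\beta>n+4$ to the high-energy Born series, which in fact needs only $\beta>\max\big(\tfrac{n+1}{2},\tfrac{n}{2\alpha}+\tfrac52\big)$.

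A second, related issue is your parenthetical claim that the spatial weights generated by $\lambda$-derivatives ``are absorbed by the decay of $V$.'' Derivatives hitting the outer resolvent kernels produce factors $|x-z_1|\sim|x|$ and $|z_2-y|\sim|y|$, which are not localized by $V$ and are unbounded in the supremum over $x,y$. The remedy (used in the paper at both energies) is to extract the phases $e^{i\lambda|x|}$, $e^{i\lambda|y|}$ so that the remaining amplitude only sees weights $\langle z_1\rangle$, $\langle z_2\rangle$, and then to analyze the combined phase $t\lambda^{2\alpha}+\lambda(|x|\mp|y|)$, splitting around its critical point and applying Van der Corput there; your sketch does this at high energy but omits it at low energy, where it is equally necessary. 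Finally, a minor point: the Born series you use is finite, so no uniform-in-$k$ control or ``summing of a geometric series'' over Born terms is needed (constants may grow with $k$); the only geometric series in the argument is the Neumann series for $M^{\pm}(\lambda)^{-1}$ on the low-energy interval.
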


	These results have been established before in the cases where $\alpha$ is an integer.  The case $\alpha = 1$, $n=3$ is particularly well known, with Theorem~\ref{thm:main n} being true for $\beta > 2$,~\cite{GoldRough} and scaling-critical conditions that approximate $\beta = 2$,~\cite{BG}.  Theorem~\ref{thm:main n} is true in cases with integer $\alpha \geq 2$ and scaling-critical conditions that include all $\beta > 2\alpha$,~\cite{EGGHighOrder}.

	We note that when $\alpha\notin \mathbb N$ that $(-\Delta)^{\alpha}$ is a non-local operator unlike the integer order operators.  However, it is still possible to a certain extent to use the integer-$\alpha$ cases as a guide to what occurs in general. Indeed, one might claim that Theorems~\ref{thm:main2} and~\ref{thm:main n} hold because such a heuristic is valid for a sufficient number of the leading order terms.    It is important to emphasize here that the resolvent of the fractional Laplacian displays some behavior that is completely absent when $\alpha$ is an integer, causing the heuristics to break down after a finite number of terms which is fortunately large enough to permit the dispersive estimates to pass through unscathed.  At high energy the culprit is the singularity at zero of the function $|\xi|^{2\alpha}$.  At low energy the asymptotic expansion of the free resolvent becomes a concatenation of two distinct power series, one in integer powers of $\lambda$ and one in powers of $\lambda^{1/\alpha}$, with logarithmic corrections if the two happen to coincide.  These expressions are more intricate than what occurs for $\alpha\in \mathbb N$, and additional care is required.
	
	To the best of our knowledge, there are no known results on global dispersive bounds for perturbed fractional Schr\"odinger operators.
	Cho, Ozawa, and Xia studied dispersive and Strichartz estimates for the free operator assuming initial data in distorted Besov spaces, \cite{COX}.  Further study of Strichartz estimates for related operators may be found in \cite{CL,GW}, for example.  Recently, Taira considered time decay estimates between weighted $L^2$ spaces for positive potentials, \cite{Taira}.  We consider related problems with the $L^p$ boundedness of the wave operators in a companion paper, \cite{EGGfracwave}.
	
	The lower bound on $\alpha$ arises due to growth of the resolvent in the spectral parameter of order $\lambda^{\frac{n+1}{2}-2\alpha}$ that we derive in Proposition~\ref{prop:R0 exp} below.  These uniform bounds may be of independent interest.  As in the integer order case, \cite{GV,EGGcounter}, we believe that some smoothness of the potential is required in general for dispersive bounds to hold if $n>4\alpha-1$.  The upper bound on $\alpha$ arises to avoid the existence of zero energy resonances for the free operator $(-\Delta)^{\alpha}$.  For a more thorough discussion of the history of dispersive estimates in the integer order case we refer to \cite{EGGHighOrder}.

	We note that one can apply standard arguments to deduce families of Strichartz estimates from the dispersive bounds in Theorems~\ref{thm:main2} and \ref{thm:main n}.
	
	The paper is organized as follows.  In Section~\ref{sec:res} we develop detailed expansions of the resolvent operators and prove a quantitative limiting absorption principle for the fractional Schr\"odinger operators.  Then in Section~\ref{sec:2d} we prove Theorem~\ref{thm:main2}, and in Section~\ref{sec:nd} we prove Theorem~\ref{thm:main n}.  Finally, in Section~\ref{sec:spec} we provide a characterization of the regularity of the threshold.
	
	\section{Resolvent estimates}\label{sec:res}
	
	For short-range potentials, such as those satisfying $|V(x)|\les \la x\ra^{-1-}$, we refer the reader to \cite{ZHZ} for a limiting absorption principle with uniform bounds on compact subsets of $(0,\infty)$ under the assumption that there are no embedded eigenvalues.  In particular, we have boundedness of the resolvents from $L^{2,\f12+}$ to $L^{2,-\f12-}$. 
	
	However we need more detailed information on the perturbed and free resolvents to study the dispersive estimates for the evolution.
	In this section we establish pointwise bounds on the limiting free resolvent operators, 
	$$
		\mR_0^\pm(\lambda)=\lim_{\epsilon\to 0^+} [(-\Delta)^{\alpha}-(\lambda\pm i\epsilon)]^{-1},
	$$
	and their derivatives.  These in turn we may use to understand the perturbed resolvent operators
	$$
		\mR_V^\pm(\lambda)=\lim_{\epsilon\to 0^+} [(-\Delta)^{\alpha}+V-(\lambda\pm i\epsilon)]^{-1}.
	$$
 	Specifically, we show
	
	\begin{prop}\label{prop:rep}

		Fix $\alpha>0$ and $n\in \mathbb N$ with $n>2\alpha$. Then,   we have the representations, with $r=|x-y|$, 
		$$
		\mR_0^+(\lambda^{2\alpha})(x,y)
		=  \frac{e^{i\lambda r}}{r^{n-2\alpha}} F(\lambda r ), \text{ and}$$ 
		\be\label{Frep}
		[\mR_0^+(\lambda^{2\alpha})-\mR_0^-(\lambda^{2\alpha})](x,y)= \lambda^{n-2\alpha}  \big[ e^{i\lambda r}F_+(\lambda r)+e^{-i\lambda r}F_-(  \lambda r)\big],
		\ee
		where, for all $0\leq N\leq \frac{n+1+4\alpha}2$, 
		\be\label{Fbounds}
		|\partial_\lambda^N F(\lambda r)|\les \lambda^{-N} \la  \lambda r \ra^{\frac{n+1}2 -2\alpha},\, \,\,\, \,\,\, 
		|\partial_\lambda^N F_\pm(\lambda r)|\les \lambda^{-N} \la  \lambda r\ra^{-\frac{n-1}2}.
		\ee 
		Further,  
		for all $1\leq N\leq \frac{n+1+4\alpha}2$ we have
		\be\label{Fbounds2}
		|\partial_\lambda^N F(\lambda r)|\les\lambda^{-N}(\lambda r)^{\min(1,n-2\alpha,2\alpha -)} \la  \lambda r \ra^{\frac{n+1}2 -2\alpha},\quad 
		|\partial_\lambda^N F_{\pm}(\lambda r)|\les\lambda^{-N}(\lambda r) \la  \lambda r \ra^{-\frac{n-1}2},
		\ee 
		which improves the estimate above for $\lambda r\les 1$.
		
	\end{prop}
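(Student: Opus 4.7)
The starting point is the Fourier representation
\[
\mR_0^\pm(\lambda^{2\alpha})(x,y)=\frac{1}{(2\pi)^n}\int_{\R^n}\frac{e^{i\xi\cdot(x-y)}}{|\xi|^{2\alpha}-\lambda^{2\alpha}\mp i0}\,d\xi,
\]
and the change of variable $\xi=\lambda\eta$ immediately shows that $\mR_0^+(\lambda^{2\alpha})(x,y)=\lambda^{n-2\alpha}K(\lambda r)$ for a one-variable function $K$ of $s=\lambda r$. Defining $F(s):=e^{-is}s^{n-2\alpha}K(s)$ produces the first representation $\mR_0^+(\lambda^{2\alpha})(x,y)=e^{i\lambda r}r^{2\alpha-n}F(\lambda r)$, so the whole problem reduces to bounding a one-variable function and its derivatives. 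The factor $\lambda^{-N}$ in each derivative estimate then arises just from $\partial_\lambda=r\,\partial_{(\lambda r)}$ together with the dimensionless character of the argument $\lambda r$ (in regions where $\lambda r\ge 1$ one rewrites $r\les \lambda^{-1}\la\lambda r\ra$, and in the complementary region $r=\lambda^{-1}\cdot(\lambda r)\les\lambda^{-1}$).

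For the spectral difference \eqref{Frep} I would apply Sokhotski--Plemelj and the identity $\delta(|\xi|^{2\alpha}-\lambda^{2\alpha})=\delta(|\xi|-\lambda)/(2\alpha\lambda^{2\alpha-1})$ to reduce to the sphere integral
\[
[\mR_0^+-\mR_0^-](\lambda^{2\alpha})(x,y)=\frac{i}{(2\pi)^{n-1}\,\alpha\,\lambda^{2\alpha-1}}\int_{|\xi|=\lambda}e^{i\xi\cdot(x-y)}\,d\sigma(\xi).
\]
The right-hand side is a constant multiple of $\lambda^{n-2\alpha}(\lambda r)^{-(n-2)/2}J_{(n-2)/2}(\lambda r)$. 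The Hankel asymptotic for $J_\nu$ writes the normalized Bessel function as $\la\lambda r\ra^{-(n-1)/2}\bigl(e^{i\lambda r}\omega_+(\lambda r)+e^{-i\lambda r}\omega_-(\lambda r)\bigr)$ with $\omega_\pm$ classical symbols of order zero, which is exactly the decomposition \eqref{Frep} together with the $F_\pm$ bound in \eqref{Fbounds}. For $\lambda r\les 1$ I would instead use the power series $J_\nu(z)=\sum c_k z^{2k+\nu}$ and the Taylor series of $e^{\pm iz}$ to extend $F_\pm$ smoothly through $s=0$; differentiating then picks up an extra factor of $s=\lambda r$, yielding the improvement \eqref{Fbounds2} for $F_\pm$.

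For the bound on $F$ itself, I would pass to the radial representation
\[
\mR_0^+(\lambda^{2\alpha})(x,y)=c_n\, r^{-(n-2)/2}\int_0^\infty\frac{\rho^{n/2}\,J_{(n-2)/2}(\rho r)}{\rho^{2\alpha}-\lambda^{2\alpha}-i0}\,d\rho,
\]
split $J_{(n-2)/2}(\rho r)$ into its two Hankel pieces carrying $e^{i\rho r}$ and $e^{-i\rho r}$, and treat each by contour deformation into the upper and lower half planes respectively. The simple pole at $\rho=\lambda$ contributes the leading $e^{i\lambda r}r^{2\alpha-n}$ term, while the remaining contour supplies the correction. The growth $\la\lambda r\ra^{(n+1)/2-2\alpha}$ reflects that at large $\rho$ the integrand is of order $\rho^{n/2-2\alpha}(\rho r)^{-1/2}$ with effective cutoff $\rho\sim 1/r$. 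Each $\lambda$-derivative falls on $\rho^{2\alpha}-\lambda^{2\alpha}$ and, with the contour parametrized near $\rho=\lambda$, contributes exactly one factor of $\lambda^{-1}$.

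The main obstacle will be handling the branch cut of $\rho^{2\alpha}$ in the complex plane, which is absent in the integer case and destroys the clean pole-plus-polynomial structure. After rotating the contour one picks up an integral along this branch cut in addition to the residue, and one must show that this branch-cut contribution satisfies the same bound $\la\lambda r\ra^{(n+1)/2-2\alpha}\,r^{2\alpha-n}$; this is precisely the mechanism responsible for the two interleaving power series in $\lambda$ and $\lambda^{1/\alpha}$ described in the introduction, and it is the place where non-integer $\alpha$ forces genuinely new work. The improved factor $(\lambda r)^{\min(1,\,n-2\alpha,\,2\alpha-)}$ in \eqref{Fbounds2} then follows by subtracting the $\lambda=0$ Riesz-potential kernel $c_{n,\alpha}\,r^{2\alpha-n}$ and expanding to first non-trivial order: the exponent $1$ accounts for the $e^{i\lambda r}$ oscillation, $n-2\alpha$ for the leading branch-cut correction, and $2\alpha$ for the expansion of the denominator at the pole, with a logarithmic loss exactly when two of these three exponents coincide.
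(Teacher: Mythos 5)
Your reduction by scaling to a single-variable function of $\lambda r$, and your treatment of the spectral difference \eqref{Frep} via the surface measure of the sphere, Bessel asymptotics for $\lambda r\gtrsim1$, and the power series for $\lambda r\les1$, are sound and essentially match the paper, which identifies $\mR_0^+-\mR_0^-$ with a multiple $c\lambda^{2-2\alpha}$ of the corresponding difference for the Laplacian and imports the known bounds on $F_\pm$. The genuine gap is in the main estimate on $F$: you explicitly defer the contribution that is not the residue at $\rho=\lambda$ --- what you call the branch-cut integral --- and this is precisely where the entire difficulty of the proposition lives. For non-integer $\alpha$ the symbol $|\xi|^{2\alpha}$ is not smooth at $\xi=0$, and that singularity produces a piece of the kernel decaying only like $(\lambda r)^{-n-2\alpha}$ relative to the natural scale; it is exactly this term that caps the number of admissible derivatives at $N\leq\frac{n+1+4\alpha}{2}$ (each $\partial_\lambda$ costs a factor $r$, and $r^{N}(\lambda r)^{n-2\alpha}\la\lambda r\ra^{-n-2\alpha}\les\lambda^{-N}\la\lambda r\ra^{\frac{n+1}{2}-2\alpha}$ only for $N$ in that range). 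Your outline never identifies this mechanism, gives no bound on the deferred contour/branch-cut piece, and does not explain where the restriction on $N$ would come from, so the estimates \eqref{Fbounds}--\eqref{Fbounds2} are asserted rather than proved at the decisive point. Two smaller issues: the Hankel splitting of $J_{(n-2)/2}(\rho r)$ into $e^{\pm i\rho r}$ symbols is only valid for $\rho r\gtrsim1$, so your contour deformation needs a separate argument on $\rho r\les1$ (and the downward rotation of the $e^{-i\rho r}$ piece must be checked against the location of the pole determined by the $\pm i0$); and the growth factor $\la\lambda r\ra^{\frac{n+1}{2}-2\alpha}$ actually comes from the residue at $\rho=\lambda$, not from the high-frequency tail with cutoff $\rho\sim1/r$ as you assert, which signals that the bookkeeping in your heuristic is not yet reliable.

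For comparison, the paper avoids contour deformation altogether: it splits the multiplier $(|\xi|^{2\alpha}-z^{2\alpha})^{-1}$ into a piece $h_{ctr}$ supported near $\xi=0$, a tail $h_{tail}$, and an annular piece $h_{ann}$ containing the singular sphere. The first two are handled by two elementary lemmas on Fourier transforms of functions with homogeneous derivative bounds (these produce exactly the $\la\rho\ra^{-n-2\alpha}$ and $\rho^{2\alpha-n}$ behaviors and, after subtracting the Riesz kernel $c_{n,\alpha}\rho^{2\alpha-n}$, the three competing exponents $1$, $n-2\alpha$, $2\alpha-$ in \eqref{Fbounds2}). The annular piece is compared to $\frac{1}{\alpha z^{2\alpha-2}}(-\Delta-z^2)^{-1}$, with the discrepancy $J(z,\xi)$ shown to be analytic across $|\xi|=|z|$ because the numerator vanishes to second order at $\zeta=1$; all oscillatory information is then inherited from the classical resolvent. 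If you want to salvage your contour approach, the work you must still do is exactly the analogue of the $h_{ctr}$ and $h_{tail}$ analysis: uniform bounds, with up to $\frac{n+1+4\alpha}{2}$ derivatives, on the rotated-contour integral including the origin of the $\rho$-half-line where $\rho^{2\alpha}$ is not smooth.
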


	For the low energy argument, we define $\log^-(y)=-\log(y)\chi_{\{0<y<1\}}$, and use the following expansions.

	\begin{prop}\label{prop:R0 exp}
		Fix $\alpha>0$ and $n\in \mathbb N$ with $n>2\alpha$.
		For $0<\lambda <1$,
		$$
			\mR_0^+(\lambda^{2\alpha})(x,y)
			=  \frac{C_\alpha}{|x-y|^{n-2\alpha}} + \mathcal E(\lambda,r)
		$$
		where
		$\mathcal E(\lambda, r)=O(\lambda^{n-2\alpha})$ when $4\alpha>n$, $\mathcal E(\lambda, r)=O(\lambda^{n-2\alpha}(1+\log^-(\lambda r)))$ when $n=4\alpha$ and 
		$\mathcal E(\lambda, r)=O(\lambda^{n-2\alpha}+\lambda^{2\alpha}r^{4\alpha-n})$ when $n>4\alpha$.
		
	\end{prop}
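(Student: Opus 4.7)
The plan is to apply the algebraic identity
\[
\frac{1}{s-\lambda^{2\alpha}-i0}=\frac1s+\frac{\lambda^{2\alpha}}{s(s-\lambda^{2\alpha}-i0)}
\]
with $s=|\xi|^{2\alpha}$ and invert the Fourier transform. Because $2\alpha<n$, the classical Riesz identity $\mathcal F^{-1}(|\xi|^{-2\alpha})(x)=C_\alpha|x|^{-(n-2\alpha)}$ identifies the first term as the claimed leading kernel, reducing the task to bounding
\[
\mathcal E(\lambda,r)=\lambda^{2\alpha}C_\alpha\int_{\R^n}\frac{\mR_0^+(\lambda^{2\alpha})(w,y)}{|x-w|^{n-2\alpha}}\,dw.
\]
Into this I would insert the pointwise bound from Proposition~\ref{prop:rep},
\[
|\mR_0^+(\lambda^{2\alpha})(w,y)|\les \frac{\la \lambda|w-y|\ra^{(n+1)/2-2\alpha}}{|w-y|^{n-2\alpha}},
\]
and split the $w$-integration into the near field $\{|w-y|\le 1/\lambda\}$ (where $\la\lambda|w-y|\ra\sim 1$) and the far field $\{|w-y|>1/\lambda\}$ (where the bound becomes $\lambda^{(n+1)/2-2\alpha}|w-y|^{-(n-1)/2}$).

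In the near field the integrand is $\les |x-w|^{-(n-2\alpha)}|w-y|^{-(n-2\alpha)}$, and the three cases in the conclusion correspond to the three regimes of Riesz composition: when $n>4\alpha$ the convolution is controlled by $|x-y|^{4\alpha-n}$ (yielding the $\lambda^{2\alpha}r^{4\alpha-n}$ term); when $n=4\alpha$ the critical convolution picks up the factor $\log^-(\lambda r)$; and when $n<4\alpha$ the cutoff at scale $1/\lambda$ renders the integral of order $\lambda^{n-4\alpha}$, which after the $\lambda^{2\alpha}$ prefactor contributes $\lambda^{n-2\alpha}$. In the regime $r\gg 1/\lambda$ the near-field ball is far from $x$, so $|x-w|\sim r$ throughout; a direct computation gives a contribution of size $r^{-(n-2\alpha)}\le\lambda^{n-2\alpha}$. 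In the far field, polar integration about $y$ (split according to whether $|w-y|$ exceeds $r$), together with the exponent arithmetic $\lambda^{2\alpha}\cdot\lambda^{(n+1)/2-2\alpha}\cdot\lambda^{(n-1)/2-2\alpha}=\lambda^{n-2\alpha}$, produces a uniform $\lambda^{n-2\alpha}$ contribution. Summing the near- and far-field estimates yields the three claimed bounds.

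The main obstacle is the case analysis: the Riesz composition changes qualitative behavior with the sign of $n-4\alpha$, and the critical case $n=4\alpha$ requires precise tracking of the logarithm. A secondary subtlety is that the far-field radial integral is only marginally convergent when $n$ is close to $4\alpha+1$; in that regime one should either dyadically decompose $|w-y|$ and use the bound $|F|\les 1$ (valid since $\alpha\ge (n+1)/4$ in our main range of interest) or supplement the $N=0$ pointwise estimate with the derivative bound~\eqref{Fbounds2} and an integration by parts in $\lambda$ to extract the missing decay.
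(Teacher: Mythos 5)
Your route is genuinely different from the paper's (which never leaves the Fourier side: it reuses the decomposition $h_{ctr}+h_{tail}+h_{ann}$ from the proof of Proposition~\ref{prop:rep}, identifies $C_\alpha r^{2\alpha-n}$ as the Fourier transform of the $|\xi|^{-2\alpha}$ piece of $h_{tail}$ in \eqref{eqn:ht def}, and reads the three cases off \eqref{eqn:htail} via Lemma~\ref{lem:FT}, the regime $\lambda r\gtrsim 1$ being immediate from the representation). However, as proposed your argument has a gap that is fatal precisely where the proposition is needed. The error term you define, $\mathcal E(\lambda,r)=\lambda^{2\alpha}C_\alpha\int_{\R^n}|x-w|^{2\alpha-n}\mR_0^+(\lambda^{2\alpha})(w,y)\,dw$, cannot be estimated by inserting absolute values: with the $N=0$ bound from \eqref{Fbounds}, the far-field integrand is $\lambda^{(n+1)/2-2\alpha}|x-w|^{2\alpha-n}|w-y|^{-(n-1)/2}$, which decays like $|w|^{2\alpha-n-(n-1)/2}$ as $|w|\to\infty$ and is integrable at infinity only when $\alpha<\frac{n-1}{4}$. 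This excludes all cases with $n\leq 4\alpha$ (the first two cases of the statement and the entire range $\alpha\geq\frac{n+1}{4}$ used for the dispersive estimates) and even part of the case $n>4\alpha$. Consequently the exponent arithmetic $\lambda^{2\alpha}\cdot\lambda^{(n+1)/2-2\alpha}\cdot\lambda^{(n-1)/2-2\alpha}=\lambda^{n-2\alpha}$, which tacitly assumes the $w$-integral is dominated by the scale $|w-y|\sim 1/\lambda$, is unjustified: with absolute values the integral is dominated by $|w|\to\infty$, i.e.\ it diverges.

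Your two suggested patches do not close this. Using $|F|\les 1$ (legitimate for $\alpha\geq\frac{n+1}{4}$) replaces the integrand by $|x-w|^{2\alpha-n}|w-y|^{2\alpha-n}$, whose integral over $\R^n$ still diverges at infinity exactly when $n\leq 4\alpha$. And ``integration by parts in $\lambda$'' is not available here: the proposition is a pointwise-in-$\lambda$ kernel bound, so there is no $\lambda$-integral to exploit. To make your reduction work one would have to (i) first justify the kernel identity itself, e.g.\ by keeping the spectral parameter with positive imaginary part so the resolvent kernel decays and then passing to the limit, and (ii) exploit the oscillation $e^{i\lambda|w-y|}$ inside the $w$-integral (stationary phase or radial integration by parts in $w$ against the non-oscillatory Riesz kernel) to recover convergence and the claimed $\lambda^{n-2\alpha}$ size. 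That is a substantive missing ingredient, not a technicality; your near-field analysis and the genuinely convergent sub-case $\alpha<\frac{n-1}{4}$ are fine, but the proof as written does not establish the proposition in the cases $n\leq 4\alpha$.
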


	Finally, we establish a limiting absorption principle for large energies.
	\begin{prop}\label{prop:LAP}
		
		Fix $\alpha>\f12$ and $n>2\alpha$.  Assume that $H$ has no embedded eigenvalues.  Then when $\lambda\gtrsim 1$, we have
		$$
			\|\la x\ra^{-\f12-}\mR_V^\pm(\lambda^{2\alpha}) \la y\ra^{-\f12 -} \|_{L^2\to L^2} \les  \lambda ^{1-2\alpha},
		$$
		provided that $|V(x)|\les \la x\ra^{-\beta}$ for some $\beta>1$.  Further, for any $j\in \N$, if $\beta>1+2j$, we have
		$$
			\|\la x\ra^{-j-\f12-}\partial_\lambda^j\mR_V^\pm(\lambda^{2\alpha}) \la y\ra^{-j-\f12 -} \|_{L^2\to L^2} \les  \lambda ^{1-2\alpha}.
		$$
	 
	\end{prop}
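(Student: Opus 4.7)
The strategy is to reduce the perturbed LAP to the free LAP via the symmetric resolvent identity, and to prove the latter from the kernel representation in Proposition~\ref{prop:rep}.

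\textbf{Free LAP.} The first step is to establish
$$\|\la x\ra^{-\tfrac12-}\mR_0^\pm(\lambda^{2\alpha})\la y\ra^{-\tfrac12-}\|_{L^2\to L^2}\les \lambda^{1-2\alpha},\qquad \lambda\gtrsim 1.$$
The intuition is that the symbol factors as $|\xi|^{2\alpha}-\lambda^{2\alpha}\approx 2\alpha\lambda^{2\alpha-1}(|\xi|-\lambda)$ near the singular sphere $|\xi|=\lambda$, so $\mR_0^\pm(\lambda^{2\alpha})$ inherits the LAP structure of the classical Laplacian resolvent $R_{-\Delta}^\pm(\lambda^2)$ after scaling by $\lambda^{2-2\alpha}$; composed with the classical Agmon bound $\lambda^{-1}$, this gives the desired $\lambda^{1-2\alpha}$. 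At the kernel level this is consistent with Proposition~\ref{prop:rep}, which supplies the oscillatory phase $e^{i\lambda r}$ and the amplitude bound $|F(\lambda r)|\les\la\lambda r\ra^{(n+1)/2-2\alpha}$, exactly reproducing the classical Hankel-type profile up to the extra factor $\lambda^{2-2\alpha}$. I would either make this comparison precise via a partition of unity separating the frequency region $||\xi|-\lambda|\les\lambda/2$ (where the symbols are truly comparable) from the smoother exterior region, or establish the claim directly by first bounding $[\mR_0^+-\mR_0^-](\lambda^{2\alpha})$ through its identification with the spectral measure $\lambda^{2-2\alpha}\,dE_{-\Delta}(\mu)/d\mu|_{\mu=\lambda^2}$, and then recovering $\mR_0^\pm$ from the imaginary part via Kato's $H$-smooth framework.

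\textbf{Perturbation and derivatives.} Split $V=V_1V_2$ with $|V_i|\les\la x\ra^{-\beta/2}$ and apply the symmetric (Birman--Schwinger) resolvent identity
$$\mR_V^\pm(\lambda^{2\alpha})=\mR_0^\pm(\lambda^{2\alpha})-\mR_0^\pm(\lambda^{2\alpha}) V_1\big(I+V_2\mR_0^\pm(\lambda^{2\alpha}) V_1\big)^{-1}V_2\mR_0^\pm(\lambda^{2\alpha}).$$
Since $\beta>1$, the factors $V_i$ absorb the weights $\la x\ra^{-1/2-}$, and the free LAP yields $\|V_2\mR_0^\pm V_1\|_{L^2\to L^2}\les\lambda^{1-2\alpha}$. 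For $\lambda\ge\lambda_0$ sufficiently large this is $<\tfrac12$, so the Neumann series converges; for $\lambda\in[1,\lambda_0]$ the operator $V_2\mR_0^\pm V_1$ is compact on $L^2$ and the Fredholm alternative applies, with the no-embedded-eigenvalue hypothesis ruling out $-1$ as an eigenvalue. Continuity in $\lambda$ and a standard compactness argument then give a uniform bound. For the $j$-th derivative bound, Proposition~\ref{prop:rep} implies $\|\la x\ra^{-j-\frac12-}\partial_\lambda^j\mR_0^\pm\la y\ra^{-j-\frac12-}\|\les\lambda^{1-2\alpha}$ since each $\partial_\lambda$ applied to $e^{i\lambda r}F(\lambda r)/r^{n-2\alpha}$ produces at most an extra factor of $r$, absorbed by the added weights via $r\le\la x\ra+\la y\ra$. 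Differentiating the symmetric identity $j$ times via Leibniz and $\partial_\lambda A^{-1}=-A^{-1}(\partial_\lambda A)A^{-1}$ produces a finite sum of such products; the decay $\beta>1+2j$ is exactly what is needed to distribute the weights across all the $V_i$ factors in each term.

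\textbf{Main obstacle.} The technical heart is the free LAP. The naive pointwise bound $|\mR_0^\pm(\lambda^{2\alpha})(x,y)|\les \lambda^{(n+1)/2-2\alpha}/r^{(n-1)/2}$ fed into Schur's test gives only $\lambda^{(n+1)/2-2\alpha}$, losing a factor of $\lambda^{(n-1)/2}$ compared to the sharp rate $\lambda^{1-2\alpha}$. Recovering this factor requires genuinely exploiting the oscillation of $e^{i\lambda r}$, i.e. a Stein--Tomas/restriction type input, and the cleanest route is to transfer the classical Agmon LAP through the symbol factorization described above rather than work from the kernel formula directly. Once that step is in hand, the rest of the argument is a standard perturbative bootstrap.
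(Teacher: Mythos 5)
Your overall architecture matches the paper's: compare $\mR_0^\pm(\lambda^{2\alpha})$ with the rescaled classical resolvent $\frac{1}{\alpha}\lambda^{2-2\alpha}R_0^\pm(\lambda^2)$ (this is exactly the factorization $|\xi|^{2\alpha}-\lambda^{2\alpha}\approx\alpha\lambda^{2\alpha-2}(|\xi|^2-\lambda^2)$ near the sphere that you propose), import Agmon's bound $\lambda^{-1}$ for the main piece, control the frequency-localized error terms by pointwise kernel bounds of size $\lambda^{1-2\alpha}r^{1-n}$ together with the weighted $L^2$ boundedness of fractional integrals (Lemma~2.3 of \cite{Jen}), and then run the symmetric resolvent identity with a Neumann series for large $\lambda$ and compactness/absence of embedded eigenvalues on bounded energy ranges (the paper cites \cite{ZHZ} for that step, which also supplies the Agmon-type bootstrap you implicitly need to rule out $-1$ as an eigenvalue of the Birman--Schwinger operator). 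Your ``main obstacle'' diagnosis is also the right one: the pointwise bound $\lambda^{\frac{n+1}{2}-2\alpha}r^{\frac{1-n}{2}}$ plus Schur cannot give $\lambda^{1-2\alpha}$, so the oscillatory main piece must be transferred to the classical LAP rather than estimated by absolute values.

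There is, however, a genuine gap in your treatment of the derivative bounds for the free resolvent. You claim that Proposition~\ref{prop:rep} gives
$\|\la x\ra^{-j-\f12-}\partial_\lambda^j\mR_0^\pm(\lambda^{2\alpha})\la y\ra^{-j-\f12-}\|_{L^2\to L^2}\les\lambda^{1-2\alpha}$
because each $\partial_\lambda$ produces at most a factor of $r$ that the extra weights absorb. But after absorbing $r^j$ you are left estimating an oscillatory kernel of the same pointwise size as the resolvent itself, and by your own observation a pointwise-kernel/Schur argument then loses a factor $\lambda^{\frac{n-1}{2}}$; so as written this step does not yield $\lambda^{1-2\alpha}$ for any $n\geq2$. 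The fix is to apply your transfer mechanism to the derivatives as well, exactly as the paper does: write $\partial_\lambda^j\mR_0^\pm(\lambda^{2\alpha})$ as $\partial_\lambda^j$ of $\frac1\alpha\lambda^{2-2\alpha}R_0^\pm(\lambda^2)$ (for which the weighted derivative LAP with weights $\la x\ra^{-j-\f12-}$ is classical) plus $\partial_\lambda^j$ of the error terms ($h_{ctr}$, $h_{tail}$, $J$, and \eqref{eq:free error}), whose kernels are non-oscillatory and are bounded via Lemma~\ref{lem:FT} by $\lambda^{1-2\alpha-j}r^{1-n}$, hence acceptable by the fractional integral bound. With that correction, and with the Agmon bootstrap for the compact energy range delegated to \cite{ZHZ} as the paper does, your argument closes; the weight bookkeeping $\beta>1+2j$ in the differentiated resolvent identity is handled exactly as you indicate.
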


To prove these representations we use the following bounds.
\begin{lemma}\label{lem:small xi}
	
	Let $g$ be compactly supported on $\R^n$, and smooth on $\mathbb R^n\setminus\{0\}$, with $|\nabla^k g(\xi)|\les |\xi|^{\gamma-k}$ for some $\gamma>-n$ and $k=0,1,\dots$ for $\xi\neq 0$. Then
	$|\widehat g(x)|\les \la x\ra^{-n-\gamma}$.  In particular, $\widehat g\in L^1$ if $\gamma>0$.
	
	Furthermore, for $j\geq 1$ we have $|\nabla^j \widehat{g}(x)|\les \la x\ra^{-n-j-\gamma}$.
	
\end{lemma}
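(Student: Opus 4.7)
The plan is to use a dyadic (Littlewood--Paley type) decomposition near the singularity at $\xi=0$, combined with an integration-by-parts argument to capture polynomial decay in $|x|$, and the trivial $L^1$ bound on frequency scales where IBP would fail. The hypothesis $|\nabla^k g(\xi)|\les |\xi|^{\gamma-k}$ is precisely what allows both bounds on each dyadic piece, while $\gamma>-n$ is exactly the condition ensuring that $g\in L^1$ and hence that $\widehat g$ is bounded.

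First I would split $g=g_1+g_2$ with a smooth cutoff, localizing $g_1$ in a small ball around the origin and placing $g_2\in C_c^\infty(\R^n\setminus\{0\})$. Then $\widehat{g_2}$ is Schwartz and satisfies a stronger bound than claimed, so the task reduces to estimating $\widehat{g_1}$. Next, decompose $g_1=\sum_{k\geq 0}g_{1,k}$ with $g_{1,k}$ supported on the annulus $|\xi|\sim 2^{-k}$ via a scaled bump. The hypothesis gives $\|\nabla^l g_{1,k}\|_\infty\les 2^{k(l-\gamma)}$, which integrated against the annular volume $\sim 2^{-kn}$ yields $\|\nabla^l g_{1,k}\|_1\les 2^{k(l-\gamma-n)}$. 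From this one reads off two estimates on $\widehat{g_{1,k}}(x)$: the trivial one $|\widehat{g_{1,k}}(x)|\les 2^{-k(n+\gamma)}$, and for any $N\geq 0$ the IBP bound $|\widehat{g_{1,k}}(x)|\les |x|^{-N}2^{k(N-n-\gamma)}$.

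For $|x|\geq 1$, pick an integer $N>n+\gamma$ and use the IBP bound on $k\leq \log_2|x|$ (the geometric sum is dominated by its largest term and gives $|x|^{-(n+\gamma)}$) and the trivial bound on $k>\log_2|x|$ (a geometric sum convergent since $n+\gamma>0$, again of order $|x|^{-(n+\gamma)}$). For $|x|\leq 1$, $g\in L^1$ gives $\|\widehat g\|_\infty\les 1$; since $\la x\ra\sim 1$ there, one concludes $|\widehat g(x)|\les \la x\ra^{-n-\gamma}$. For the derivative statement I would use $\nabla^j\widehat g = \widehat{(i\xi)^j g}$ and verify by the Leibniz rule that the function $h(\xi)=\xi^j g(\xi)$ (in any multi-index of size $j$) satisfies $|\nabla^k h(\xi)|\les |\xi|^{\gamma+j-k}$, i.e.\ the same hypothesis with $\gamma$ replaced by $\gamma+j$; the first part then gives $|\nabla^j\widehat g(x)|\les \la x\ra^{-n-\gamma-j}$.

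The main obstacle is essentially the bookkeeping of the dyadic sum: choosing the crossover scale $2^k\sim|x|$ and picking $N>n+\gamma$ so that the IBP geometric series is summable in the correct direction while the trivial tail converges because $n+\gamma>0$. Once the split between the two bounds is placed correctly, all constants are independent of $k$ thanks to the scale-invariant form of the hypothesis, and the rest of the argument is routine.
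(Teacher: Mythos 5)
Your proof is correct and follows essentially the same strategy as the paper: a split of frequency space at the scale $|\xi|\sim |x|^{-1}$ (the paper uses a single cutoff $\chi(\xi|x|)+\widetilde\chi(\xi|x|)$ where you sum dyadic shells, which is only a bookkeeping difference), the trivial $L^1$ bound on the low frequencies, integration by parts more than $\gamma+n$ times on the high frequencies, and the identical reduction for derivatives via noting that $\xi^{\alpha}g$ satisfies the hypothesis with $\gamma+|\alpha|$ in place of $\gamma$.
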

\begin{proof}
	The bound is clear for $|x|\les 1$.  For $|x|\gtrsim 1$, we write
	$$
	\widehat g(x)=\int_{\R^n} e^{-2\pi i x\cdot \xi}g(\xi)[\chi(\xi |x|)+\widetilde \chi(\xi |x|)]\, d\xi
	$$
	When $|\xi| |x|\les 1$, the bound is clear by converting to polar coordinates. For $|\xi| |x|\gtrsim 1$, we integrate by parts $k>\gamma +n$ times to bound by
	$$
	\int_{|\xi|\gtrsim |x|^{-1} } |\xi|^{\gamma-k} |x|^{-k}  \,d\xi \les |x|^{-\gamma-n}.
	$$
	The claim for derivatives follows because $|\xi|^j g$ satisfies the hypotheses with $\gamma+j$ in place of $\gamma$.

\end{proof}

\begin{lemma}\label{lem:FT}
	
	Let $g$ be a smooth function, supported  away from zero on $\mathbb R^n$, that satisfies $|\nabla^k g(\xi)|\les |\xi|^{\gamma-k}$ for some $\gamma<0$ and $k=0,1,2,\dots$. Then $\widehat{g}$ is a smooth function on $\mathbb \R^n\setminus\{0\}$ satisfying 
	$$
		|\nabla^N\widehat{g}(x)|\les  \left\{
		\begin{array}{ll}
			|x|^{-\gamma-n-N} & \text{if }\gamma+N>-n,\\
			|\log |x|\,| & \text{if }\gamma+N=-n,\\
			1 & \text{if }\gamma+N<-n.
		\end{array}
		\right. 
	$$
	Morever for $|x|\gtrsim 1$, $|\nabla^N\widehat{g}(x)|\les |x|^{-M}$ for all $M,N$.

\end{lemma}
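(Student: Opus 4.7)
The proof follows a standard dyadic frequency decomposition. First, I would reduce estimating $\nabla^N\widehat{g}$ to estimating the Fourier transform of the symbol itself: since $\nabla^N\widehat{g}(x) = \widehat{h}(x)$ for $h(\xi)=(-2\pi i\xi)^N g(\xi)$, Leibniz gives $|\nabla^k h(\xi)|\les |\xi|^{\gamma+N-k}$, and $h$ is again smooth and supported away from zero. So it suffices to prove the case $N=0$ with an exponent $\gamma'\in\R$ (possibly non-negative) replacing $\gamma$.

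Next, pick a smooth partition of unity $1=\sum_{j\geq j_0}\phi_j(\xi)$ on the support of $g$, with $\phi_j$ supported in the shell $|\xi|\sim 2^j$. Set $g_j = g\phi_j$; then $|\nabla^k g_j(\xi)|\les 2^{j(\gamma'-k)}$ uniformly on a set of volume $\sim 2^{jn}$, so for any integer $K\geq 0$
$$
|\widehat{g_j}(x)|\les \min\bigl(2^{j(n+\gamma')},\, |x|^{-K}\,2^{j(n+\gamma'-K)}\bigr),
$$
the first estimate being trivial and the second arising from $K$ integrations by parts, legitimate because $g_j$ has compact support.

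Summing over $j$, I would split at $2^{j^*}\sim |x|^{-1}$, using the first bound for $j\leq j^*$ and the second (with $K>n+\gamma'$) for $j>j^*$. For $|x|$ small this produces $|x|^{-(n+\gamma')}$ when $n+\gamma'>0$, $|\log|x||$ when $n+\gamma'=0$, and $O(1)$ when $n+\gamma'<0$, matching the three cases of the statement once $\gamma+N$ is substituted back for $\gamma'$. For $|x|\gtrsim 1$, a single integration by parts on the full integral gives $|\widehat{g}(x)|\les |x|^{-K}\|\nabla^K g\|_1$, and the $L^1$ norm is finite for $K>\gamma+n$; since $K$ is arbitrary, we obtain faster-than-polynomial decay at infinity.

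Smoothness on $\R^n\setminus\{0\}$ is inherited from the series $\widehat{g}=\sum_j \widehat{g_j}$: each $\widehat{g_j}$ is entire by Paley--Wiener, and the dyadic estimates above, applied at any order $N$ with a sufficiently large $K$, show that $\sum_j \nabla^N\widehat{g_j}$ converges uniformly on compact subsets of $\R^n\setminus\{0\}$. The only mildly delicate point is the accounting in the borderline case $n+\gamma'=0$, where the number of dyadic scales between $j_0$ and $j^*$ supplies the logarithmic factor; otherwise this is a routine symbolic-calculus estimate with no serious obstacle.
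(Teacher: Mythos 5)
Your proof is correct and is essentially the paper's argument: reduce to $N=0$ by multiplying $g$ by $\xi^N$, split frequency space at $|\xi|\sim |x|^{-1}$ with the trivial bound below that scale and repeated integration by parts above it (you merely discretize this into dyadic shells, the count of borderline scales producing the logarithm), together with rapid decay for $|x|\gtrsim 1$ coming from $\nabla^K g\in L^1$ once $K>n+\gamma$ --- exactly the scheme of Lemma~\ref{lem:small xi} that the paper invokes. The only small caveat is that, since $g$ itself need not be integrable, the ``integration by parts on the full integral'' for $|x|\gtrsim 1$ is best justified either by summing your dyadic bounds over all $j$ or by working with $\nabla^K g\in L^1$ as the paper does; this is a cosmetic fix, not a gap.
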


\begin{proof}
	
	Noting that $\nabla^k   g\in L^1$ for sufficiently large $k$, we have, up to a distribution $u$ supported at zero, $\widehat g$ is a continuous function satisfying $|\widehat{g}(x)|\les |x|^{-M}$ for all $M$ and $x\neq 0$.  Since $g$ decays at infinity, $u=0$.  Similarly, $\nabla^k |\xi|^N  g\in L^1$ for sufficiently large $k$, so the derivatives also decay rapidly at infinity.
	
	To obtain the bounds for small $|x|\les 1$, we repeat the argument of Lemma~\ref{lem:small xi} above for $|\xi|^Ng(\xi)$, bounding the integral $1\les |\xi|\les |x|^{-1}$ directly and integrating by parts sufficiently many times when $|x||\xi|\gtrsim 1$.
	
\end{proof}

\begin{proof}[Proof of Proposition~\ref{prop:rep}]

For a complex number $z$ with $0 < \argz < \frac{\pi}{\alpha}$, the convolution kernel of $\mR_0(z^{2\alpha})$ is the Fourier transform of the smooth function $\frac{1}{|\xi|^{2\alpha} - z^{2\alpha}}$.  This is a radial function which can be rescaled as
$|z|^{-2\alpha}\big(\big|\frac{\xi}{|z|}\big|^{2\alpha} - (\frac{z}{|z|})^{2\alpha}\big)^{-1}$.  It follows that
\[
\mR_0(z^{2\alpha})(x,y) = |z|^{n-2\alpha} F_{\argz}(|z| r)
\]
where $r = |x-y|$.   Then by the definition of $F(\lambda r)$ in Proposition~\ref{prop:rep},
\be \label{eq:F from Fargz}
F(\lambda r) = (\lambda r)^{n-2\alpha}e^{-i\lambda r} \lim\limits_{\argz \to 0^+} F_{\argz}(\lambda r).
\ee

To determine properties of $F_{\argz}$, it suffices to assume that $|z| = 1$.  Let $|z| = 1$ with $0 < \argz < \frac{\pi}{\alpha}$.  We divide the function $h(\xi) = \frac{1}{|\xi|^{2\alpha} - z^{2\alpha}}$ into three pieces:
\[
\frac{\chi(4 |\xi|)}{|\xi|^{2\alpha} - z^{2\alpha}} + \frac{1 -\chi(\frac{|\xi|}{2})}{|\xi|^{2\alpha} - z^{2\alpha}} + \frac{\chi(\frac{|\xi|}{2}) - \chi(4 |\xi|) }{|\xi|^{2\alpha} - z^{2\alpha}} := h_{ctr}(\xi) + h_{tail}(\xi) + h_{ann}(\xi).
\]

We consider the Fourier transform for large $\rho$ first, where $\rho$ here denotes the Fourier variable. The $h_{ctr}$ piece is supported in the disk $|\xi| < \frac12$ and is smooth except for a polynomial singularity at the origin. 
We write
$$
	h_{ctr}(\xi)=-z^{-2\alpha}\chi(4|\xi|)+\bigg[\frac{1}{|\xi|^{2\alpha} - z^{2\alpha}}+z^{-2\alpha}\bigg]\chi(4|\xi|).
$$
Note that the first term is Schwartz, and the second term satisfies the hypotheses of Lemma~\ref{lem:small xi} with $\gamma=2\alpha$.
It follows that, uniformly in $\arg z$, 
\be\label{eqn:hctr}
	|\nabla^k\widehat{h_{ctr}}(\rho) |\les \la \rho \ra^{-n-2\alpha-k}, \qquad k=0,1,2,\dots
\ee
The existence of the limit as $\arg z\to 0^+$ is clear.  Using this bound in \eqref{eq:F from Fargz} with $\rho=\lambda r$, we conclude that the contribution of $h_{ctr}$ to $F(\lambda r)$ satisfies
$$
	|\partial_\lambda^{N}F_{ctr}(\lambda r)| \les r^{N} (\lambda r)^{n-2\alpha}\la \lambda r\ra^{-n-2\alpha}\les \lambda^{-N} \la \lambda r\ra ^{\frac{n+1}{2}-2\alpha}.
$$
provided that $N\leq \frac{n+1+4\alpha}{2}$.  Further, for  $\lambda r\les 1$ we have 
$$
	|\partial_\lambda^{N}F_{ctr}(\lambda r)| \les  \lambda^{-N}  (\lambda r )^{n-2\alpha+N}.
$$

The $h_{tail}(\xi)$ piece is supported in the region $|\xi| > 2$, with bounds on its derivatives
$|\nabla^k h_{ctr}(\xi)| \les |\xi|^{-2\alpha -k}$.  It follows that for any choice of $M$ and $N$, we have
\[
|\nabla^N\widehat{h_{tail}}(\rho)| \les \rho^{-M} \quad \text{for } \rho >1,
\]
uniformly in $\argz$.

When $\rho<1$, noting that
$n - 2\alpha > 0$, the small $\rho$ behavior of $F_{\argz}(\rho)$ is dominated by the contribution of $h_{tail} $.  In more detail,
\be\label{eqn:ht def}
	h_{tail}(\xi) = \frac{1}{|\xi|^{2\alpha}} -\frac{\chi(\frac{|\xi|}{2})}{|\xi|^{2\alpha}} + z^{2\alpha}\frac{1-\chi(\frac{|\xi|}{2})}{|\xi|^{2\alpha}(|\xi|^{2\alpha}-z^{2\alpha})}.
\ee
The first term contributes a constant multiple of $\rho^{2\alpha-n}$ to $\widehat{h_{tail}}$.
The second term's contribution to $\widehat{h_{tail}}$ is bounded with bounded derivatives by Lemma~\ref{lem:small xi}.
The last term behaves like $|\xi|^{-4\alpha}$ for large $\xi$.  By Lemma~\ref{lem:FT} the  $N^{th}$ derivative of its Fourier transform for $\rho<1$ may be bounded by
\be\label{eqn:htail}
	 \left\{
	\begin{array}{ll}
		1 & 4\alpha-N>n\\
		|\log \rho| & 4\alpha-N=n\\
		\rho^{4\alpha-N-n} & 4\alpha-N <n
	\end{array}
	\right.
\ee
Hence we conclude that for $\rho<1$,
\be\label{eq:htail d}
	|\nabla^N \widehat{h_{tail}}(\rho)|\les 
		\rho^{2\alpha-n-N}.
\ee
Using these bounds, we conclude that the contribution of $h_{tail}$ to $F(\lambda r)$ in \eqref{eq:F from Fargz} satisfies the required bounds
$$
	|\partial_\lambda^{N}F_{tail}(\lambda r)| \les \lambda^{-N} \la \lambda r\ra ^{\frac{n+1}{2}-2\alpha}.
$$
For small $\lambda r$, we may improve the bounds by writing (for $\rho<1$)
$$
	\widehat{h_{tail}}(\rho)=\frac{c_{n,\alpha}}{\rho^{n-2\alpha}}+h_{t2}(\rho),
$$
where
$$
	|\nabla^N h_{t2}(\rho)|\les 
		1 +		\rho^{4\alpha-N-n-}.
$$
So that, from \eqref{eq:F from Fargz}, for $N\geq1$ we have (for $\lambda r<1$)
$$
	|\partial_\lambda^{N}F_{tail}(\lambda r)|\les r^N+\lambda^{-N}(\lambda r)^{n-2\alpha}+\lambda^{-N}(\lambda r)^{2\alpha -}\les  \lambda^{-N}(\lambda r)^{\min(1,n-2\alpha,2\alpha -)}.
$$

We now turn to the contribution of $h_{ann}(\xi)$, which becomes singular as $\argz \to 0^+$, so more care is needed here.  We compare its behavior to a multiple of the resolvent of the Laplacian in order to take advantage of that operator's well known properties.

We write
\[
\frac{1}{|\xi|^{2\alpha} - z^{2\alpha}} = \frac{1}{\alpha z^{2\alpha - 2}(|\xi|^2 - z^2)} + J(z, \xi),
\]
where
\[
J(z,\xi) = \frac{\alpha z^{2\alpha -2}(|\xi|^2 - z^2) - (|\xi|^{2\alpha} - z^{2\alpha})}{\alpha z^{2\alpha - 2}(|\xi|^{2\alpha} - z^{2\alpha})(|\xi|^2 - z^2)}
= \frac{\alpha \big((\frac{|\xi|}{z})^2 - 1\big) - \big((\frac{|\xi|}{z})^{2\alpha}-1\big)}{\alpha z^{2\alpha} \big((\frac{|\xi|}{z})^{2\alpha} - 1\big) \big((\frac{|\xi|}{z})^2 - 1\big)} .
\]

Let $\zeta = \frac{|\xi|}{z}$, which allows some simplification to
\[
J(z, \xi) = \frac{\alpha(\zeta^2 - 1) - (\zeta^{2\alpha} - 1)}{\alpha z^{2\alpha}(\zeta^{2\alpha} - 1)(\zeta^2 - 1)}.
\]
With some abuse of notation, we denote this as $J(z,\zeta)$.
The support of $h_{ann}(\xi)$ consists of an annulus contained in the region $\frac14 \leq |\xi| \leq 4$, and we are still assuming $|z| = 1$.  Thus $\frac14 \leq |\zeta| \leq 4$, and the argument of $\zeta$ is exactly $-\argz$.  Restricting $|\argz| < \frac{\pi}{2\alpha}$ ensures that $J(z, \zeta)$ is a meromorphic function of $\zeta$ in the region corresponding to the support of $h_{ann}(\xi)$, with a possible pole at $\zeta = 1$.

The denominator of $J(z,\zeta)$ vanishes precisely at order $(\zeta - 1)^2$.  Expanding the numerator in a Taylor series around $\zeta =1$ yields the result
\[
\alpha(\zeta^2 - 1) - (\zeta^{2\alpha} - 1) = 2\alpha(1 - \alpha)(\zeta - 1)^2 + O(\zeta - 1)^3,
\]
hence $J(z, \zeta)$ is actually holomorphic in that region. It follows that $J(z, \xi)$ is real-analytic in the support of $\chi(\frac{|\xi|}{2}) - \chi(4|\xi|)$ and varies in an analytic way with $z$ within the range $|\argz| < \frac{\pi}{2\alpha}$.  Now we have the bound
\[
\bigg|\mathcal{F}\bigg[\bigg(\chi\big(\frac{|\cdot|}{2}\big) - \chi(4|\cdot|)\bigg)J(z, \cdot)\bigg](r)\bigg| \les \la \rho\ra ^{-M}
\]
for any $M < \infty$, and furthermore the constants are uniform over $|\argz| < \frac{\pi}{2\alpha}$.  The argument similarly applies to derivatives.  Its contribution to $F(\lambda r)$ in \eqref{eq:F from Fargz} follows as in the argument for $h_{ctr}$.  

The remaining contribution of $h_{ann}(\xi)$ to $F(\lambda r)$ comes from the Fourier transform of
\[
\frac{\chi(\frac{|\xi|}{2}) - \chi(4|\xi|)}{\alpha z^{2\alpha - 2}(|\xi|^2 - z^2)} =
\frac{1}{\alpha z^{2\alpha - 2}(|\xi|^2 - z^2)} 
- \frac{\chi(4|\xi|)}{\alpha z^{2\alpha - 2}(|\xi|^2 - z^2)}
- \frac{1 - \chi(\frac{|\xi|}{2})}{\alpha z^{2\alpha - 2}(|\xi|^2 - z^2)}.
\]
We consider cases when $\rho>1$ and $\rho<1$ separately, considering $\rho>1$ first.
The first term on the right side is a constant multiple of the free resolvent of the Laplacian.  The limit as $\argz \to 0^+$ is known to exist and its kernel is of the form (for $\rho>1$)
$$
	\frac{e^{i\rho}}{\rho^{n-2}} F_1(\rho), \qquad  |\partial_\rho^N F_1(\rho)|\les 
	\la \rho\ra^{\frac{n-3}{2}-N}
$$
for all $N \geq 0$.  Its contribution to $F(\lambda r)$ in \eqref{eq:F from Fargz} satisfies \eqref{Fbounds} when $\lambda r\gtrsim 1$.  The second term is smooth and compactly supported, so its Fourier transform has Schwarz decay for large $\rho$.  The third term is similar to $h_{tail}$, and also gives rise to a kernel with Schwarz decay for large $\rho$ (but a singularity as $\rho \to 0$).  This implies the claim for the contribution of $F_{ann}$ when $\lambda r\gtrsim 1$.

Now, for $\lambda r\les 1$ we consider the left hand side directly.  Its Fourier transform is the resolvent kernel of the Laplacian $(-\Delta - z^2)^{-1}$ mollified by a Schwarz function constructed from $\widehat{\chi}(\rho)$.  As $\argz \to 0^+$, the resolvent kernels converge to the limit $(-\Delta - (1 + i0))^{-1}(r)$.  Then the mollification ensures that the resulting function of $\rho$ is smooth at the origin.  So once again, for $\rho \les 1$ the contribution to $F_{\argz}(\rho)$ is bounded and has bounded derivatives, uniformly up to the limit as $\argz$ approaches zero.  This also satisfies the improved bounds for $\lambda r\les 1$ as in the argument for $h_{ctr}$.

Finally, we observe that $\mR_0^+(\lambda^{2\alpha})- \mR_0^-(\lambda^{2\alpha})$ is $c\lambda^{2-2\alpha}$ times the analogous difference of resolvents of the Laplacian. Both are scaled restrictions in frequency space to the sphere $\{|\xi| = \lambda\}$. The functions $F_+$ and $F_-$ are exactly the same as their counterparts for $(-\Delta - (\lambda^2 \pm i0))^{-1}$, which are known to satisfy the bound  \eqref{Fbounds}, \cite{EGWaveOp2}.

For small $\lambda r$, to establish the second inequality in \eqref{Fbounds2}, we recall the proof of Lemma~2.4 in \cite{EGWaveOp2}.  We may write
$\chi(r)F_{\pm}(r)=\widetilde F(r)/(2\cos r)$ where $\widetilde F(r)$ is entire with bounded derivatives and $\cos(r)\geq \f12$.  From here it is clear that
$$
	|\partial_\lambda^N F_\pm(\lambda r)|\les r^N \la \lambda r\ra^{\frac{1-n}{2}-N},
$$
which implies the claim.

\end{proof}

\begin{proof}[Proof of Proposition~\ref{prop:R0 exp}]
	
	The claim for $\lambda r\gtrsim 1$ follows from the representation in Proposition~\ref{prop:rep}.
	
	When $\lambda r\les 1$, from the proof of Proposition~\ref{prop:rep} we see that the contribution of $h_{ann}$ and $h_{ctr}$ may be bounded by $\lambda^{n-2\alpha}$.  From \eqref{eqn:ht def}, we see that the first term in $h_{tail}$ contributes the Green's function $r^{2\alpha-n}$, the second term contributions $\lambda^{n-2\alpha}$ while the last term's contribution depends on the relative size of $4\alpha$ and $n$.  By \eqref{eqn:htail} (with $N=0$), we see that the contribution is $\lambda^{n-2\alpha}$ when $n<4\alpha$, $\lambda^{n-2\alpha}|\log(\lambda r)|$ when $n=4\alpha$ and $\lambda^{2\alpha}r^{4\alpha-n}$ when $n>4\alpha$.  When $n=4\alpha$ since $\lambda \les 1$ we bound $|\log(\lambda r)| $ by $\log^-$.
	
\end{proof}

\begin{proof}[Proof of Proposition~\ref{prop:LAP}]
	
	We first consider when $V=0$.  The proof of Proposition~\ref{prop:R0 exp} implies that $\lim_{\epsilon\to 0^+} \mR_0(\lambda^{2\alpha}+i\epsilon)$ exists and is equal to $\frac{1}{\alpha}\lambda^{2-2\alpha} R_0^+(\lambda^2)$ plus an error term which consists of the contributions of the Fourier transforms of $h_{ctr}$, $h_{tail}$, $J(z,\xi)$, and
	\be\label{eq:free error}
		- \frac{\chi(4|\xi|)}{\alpha z^{2\alpha - 2}(|\xi|^2 - z^2)}
		- \frac{1 - \chi(\frac{|\xi|}{2})}{\alpha z^{2\alpha - 2}(|\xi|^2 - z^2)}.
	\ee
	By the limiting absorption principle for the classical Schr\"odinger operator, the main piece, $\frac{1}{\alpha}\lambda^{2-2\alpha} R_0^+(\lambda^2)$, satisfies the claim.  To finish the proof it suffices to show that the Fourier transform of the remaining terms are bounded by $\rho^{1-n}$.  Indeed, the contribution to $\mR_0^+(\lambda^{2\alpha})$ is bounded by $\lambda^{n-2\alpha}(\lambda r)^{1-n}=\lambda^{1-2\alpha}r^{1-n}$, which satisfies the claim by boundedness of the fractional integral operators, see Lemma~2.3 in \cite{Jen}.
	
	The contribution $J(z,\xi)$ and the first term in \eqref{eq:free error}  satisfy the claim because they are Schwartz.  From  \eqref{eqn:hctr}, $h_{ctr}(\rho)\les \la \rho\ra^{-n-2\alpha}\les \rho^{1-n}$, so its contribution  to the free resolvent is bounded by $\lambda^{1-2\alpha}r^{1-n}$.  Since $h_{tail}$ has Schwartz decay for large $\rho$ and is at worst $\rho^{2\alpha-n}\les \rho^{1-n}$ for small $\rho$, since $\alpha>\f12$, its contribution is also bounded by $\lambda^{1-2\alpha}r^{1-n}$.  As in the analysis of $h_{tail}$ the second term in \eqref{eq:free error} contributes a Schwartz decay for large $\rho$. For small $\rho$ it contributes either $\rho^{2-n}\les \rho^{1-n}$ for $n>2$ and $|\log \rho|\les \rho^{1-n}$ when $n=2$.  This establishes the claim for $\mR_0^\pm(\lambda^{2\alpha})$.
	
	We now turn to $\mR_V^\pm(\lambda^{2\alpha})$. As in the classical case this follows from the claim for the free resolvent by utilizing the symmetric resolvent identity
	\be\label{eqn:symm resolv id}
		\mR_V^\pm(\lambda^{2\alpha})=\mR_0^\pm(\lambda^{2\alpha})-\mR_0^\pm(\lambda^{2\alpha})v [U+v\mR_0^\pm(\lambda^{2\alpha})v]^{-1}v\mR_0^\pm(\lambda^{2\alpha}),
	\ee
	where
	$v=|V|^{\f12}$, $U=\sgn(V)$ and 
	and by establishing uniform bounds on $[U+v\mR_0^\pm(\lambda^{2\alpha})v]^{-1}$ on $L^{2}$.
	A uniform bound on compact intervals was established in \cite{ZHZ} by applying Agmon's method.  And for large $\lambda$ it's simpler by noting that	
	$\|v\mR_0^\pm(\lambda^{2\alpha})v\|_{L^{2}\to L^{2}}\leq C_V\lambda^{1-2\alpha}<\f12$ provided $\lambda$ is large enough since $\alpha>\f12$.  
	
	For the derivatives, the claim follows from the resolvent identity and the corresponding claims for $\mR_0^\pm(\lambda^{2\alpha})$.  The contribution of the free Schr\"odinger resolvent is well-known.  For the error term we consider the contribution of the second term in \eqref{eq:free error}.  The Fourier transform for large $\rho$ has Schwartz decay and hence satisfies the claim.  For small $\rho$, by Lemma~\ref{lem:FT} it's $j^{th}$ derivative is bounded by $\rho^{2-n-j}$, whose contribution to the $j^{th}$ derivative of the free resolvent by the chain rule is bounded by 
	$$
		\lambda^{n-2\alpha}r^j (\lambda r)^{2-n-j}\chi(\lambda r)\les \lambda^{1-2\alpha -j}r^{1-n}.
	$$
	Since $j\geq 1$ this maps $L^{2,\f12+}\to L^{2,-\f12-}$ with smaller operator norm.  The contribution of $h_{tail}$ can be handled similarly using \eqref{eq:htail d} for small $\rho$ and the Schwartz decay for large $\rho$.
	The contribution of the other terms are simpler.

	The claim for the derivatives of the perturbed resolvent follows from \eqref{eqn:symm resolv id} noting that
	$$
		\partial_\lambda [U+v\mR_0^\pm(\lambda^{2\alpha})v]^{-1}=[U+v\mR_0^\pm(\lambda^{2\alpha})v]^{-1} v\partial_\lambda \mR_0^\pm(\lambda^{2\alpha})v
		[U+v\mR_0^\pm(\lambda^{2\alpha})v]^{-1},
	$$
	and its iterates for higher derivatives. This requires $|v(x)|\les \la x\ra^{-\f12-j-}$.
	
\end{proof}

\section{Proof of Theorem~\ref{thm:main2}}\label{sec:2d}

Employing the Stone's formula, Theorem~\ref{thm:main2} follows by proving
$$
	\sup_{L\geq 1}\sup_{x,y\in\R^2}\bigg|\int_0^\infty e^{it\lambda^{2\alpha}} \lambda^{4\alpha-3} \chi(\lambda/L) [\mR_V^+-\mR_V^-](\lambda^{2\alpha})(x,y)\, d\lambda \bigg| \les \frac{1}{|t|}.
$$
This will be done in two subsections by addressing high energies, when $\lambda\gtrsim 1$, and low energies, when $0<\lambda \ll 1$, separately.

\subsection{High energy}
In this subsection we prove the following proposition.

\begin{prop}\label{prop:2d hi}
	
	Fix $\frac34\leq \alpha<1$ and assume that $|V(x)|\les \la x\ra^{-\beta}$ for some $\beta>\f52$ and that $H$ has no embedded eigenvalues.  Then,
	$$
		\sup_{L\geq 1}\sup_{x,y\in\R^2}\bigg|\int_0^\infty e^{it\lambda^{2\alpha}} \lambda^{4\alpha-3}\widetilde \chi(\lambda) \chi(\lambda/L) \mR_V^\pm(\lambda^{2\alpha})(x,y)\, d\lambda \bigg| \les \frac{1}{|t|}.
	$$
	
\end{prop}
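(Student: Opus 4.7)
The plan is to apply the symmetric resolvent identity~\eqref{eqn:symm resolv id}, writing
$$\mR_V^\pm(\lambda^{2\alpha})=\mR_0^\pm(\lambda^{2\alpha})-\mR_0^\pm(\lambda^{2\alpha})v\, M^\pm(\lambda)\, v\mR_0^\pm(\lambda^{2\alpha}),$$
with $M^\pm(\lambda):=[U+v\mR_0^\pm v]^{-1}$ uniformly bounded on $L^2$ for $\lambda\gtrsim 1$ by the proof of Proposition~\ref{prop:LAP}, and estimating the free and perturbation pieces separately.

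For the free piece, substituting the representation $\mR_0^\pm(\lambda^{2\alpha})(x,y)=e^{\pm i\lambda r}F(\lambda r)/r^{n-2\alpha}$ from Proposition~\ref{prop:rep} with $n=2$ turns the integral into an oscillatory integral in $\lambda$ with phase $\Phi_\pm(\lambda)=t\lambda^{2\alpha}\pm\lambda r$ and smooth amplitude controlled by $\lambda^{4\alpha-3}\la\lambda r\ra^{3/2-2\alpha}r^{2\alpha-2}$. Either $\Phi_\pm$ has no critical point in the support of $\widetilde\chi\chi(\cdot/L)$, in which case iterated non-stationary integration by parts gives decay faster than $|t|^{-1}$; or the critical point $\lambda_0\sim(r/t)^{1/(2\alpha-1)}$ falls in $[1,\infty)$, and then stationary phase with $|\Phi_\pm''(\lambda_0)|\sim t\lambda_0^{2\alpha-2}$ and the amplitude bound at $\lambda_0$ yields a contribution of order $\lambda_0^{\alpha-1/2}r^{-1/2}/\sqrt{t}\sim|t|^{-1}$, using that $(\alpha-\tfrac12)/(2\alpha-1)=\tfrac12$. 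This reproduces the known $L^1\to L^\infty$ bound for $e^{it(-\Delta)^\alpha}(-\Delta)^{\alpha-1}$ at high frequency.

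For the perturbation piece, integrate by parts once in $\lambda$ using $e^{it\lambda^{2\alpha}}=\partial_\lambda(e^{it\lambda^{2\alpha}})/(2i\alpha t\lambda^{2\alpha-1})$ to extract $1/|t|$; the boundary terms vanish by the cutoffs. Distributing the derivative via the product rule and the identity $\partial_\lambda M^\pm=-M^\pm(v\partial_\lambda\mR_0^\pm v)M^\pm$, the residual integrand is estimated by combining the pointwise bounds on $F$ and $\partial_\lambda F$ from Proposition~\ref{prop:rep}, the uniform bound $\|M^\pm\|_{L^2\to L^2}\les 1$, and the operator-norm estimate $\|v\partial_\lambda\mR_0^\pm v\|_{L^2\to L^2}\les\lambda^{1-2\alpha}$ (whose proof parallels that of Proposition~\ref{prop:LAP} and requires $\beta>\tfrac{5}{2}$), which in turn gives $\|\partial_\lambda M^\pm\|_{L^2\to L^2}\les\lambda^{1-2\alpha}$.

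The main obstacle is to secure $\lambda$-integrability at infinity after IBP. The prefactor $\lambda^{2\alpha-2}$ arising from $\lambda^{4\alpha-3}/\lambda^{2\alpha-1}$ is only marginally integrable, so each term produced by the derivative must supply extra $\lambda$-decay: the polynomial gain $\lambda^{2\alpha-3}$ from differentiating $\lambda^{2\alpha-2}$, the factor $\lambda^{-1}$ from $\partial_\lambda F$, and the factor $\lambda^{1-2\alpha}$ from $\partial_\lambda M^\pm$ each close the estimate because $\alpha<1$. The technical subtlety is that when $\partial_\lambda$ hits an oscillatory factor $e^{\pm i\lambda r}$ it produces a spatial factor of $r$, which must be absorbed via oscillatory cancellation in the $z_1,z_2$ integrations (through non-stationary phase in $\lambda$ away from the critical point); tracking this uniformly in $x,y$ and $L$ is the technical core of the argument.
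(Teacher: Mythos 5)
Your treatment of the free term is fine, but the way you handle the perturbation term has a genuine gap: a single application of the symmetric resolvent identity \eqref{eqn:symm resolv id} does not supply enough decay in $\lambda$ for the high\,--energy integral to converge. With $n=2$ and $\alpha\in[\frac34,1)$, the best you can do with the outer resolvents is $|\mR_0^\pm(\lambda^{2\alpha})(x,z_1)|\les \la \lambda r_1\ra^{3/2-2\alpha}r_1^{2\alpha-2}\les \lambda^{3/2-2\alpha}r_1^{-1/2}$, and the spatial factor $r_1^{-1/2}$ is exactly what is needed to produce the weights $\la x\ra^{-1/2}\la y\ra^{-1/2}$; after pairing with the prefactor $\lambda^{4\alpha-3}$ and the $O(1)$ bound on $M_\pm^{-1}$, the amplitude of your perturbation term is only $O(1)$ in $\lambda$, not $O(\lambda^{-2})$. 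Consequently your bookkeeping does not close: $\lambda^{2\alpha-2}$ is \emph{not} ``marginally integrable'' on $[1,\infty)$ (its exponent is $\geq -\frac12$), the term in which $\partial_\lambda$ hits $M^\pm$ produces at best $\lambda^{2\alpha-2}\cdot\lambda^{1-2\alpha}\cdot\lambda^{3-4\alpha}=\lambda^{2-4\alpha}$, which diverges logarithmically at the endpoint $\alpha=\frac34$, and the term in which $\partial_\lambda$ hits $e^{\pm i\lambda r_j}$ gains no power of $\lambda$ at all while producing unbounded factors $r_1\approx|x|$, $r_2\approx|y|$. Those factors cannot be ``absorbed via oscillatory cancellation in the $z_1,z_2$ integrations'': the large parameters $|x|,|y|$ are not integration variables. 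They must be kept in the $\lambda$-phase, i.e.\ one integrates by parts against the full phase $t\lambda^{2\alpha}+\lambda(|x|+|y|)$ away from its stationary point, and near the stationary point (which you never address for the perturbation piece) one needs Van der Corput together with the observation that $\lambda_0\gtrsim1$ forces $|t|\les|x|+|y|$, so the weights $\la x\ra^{-1/2}\la y\ra^{-1/2}$ upgrade $|t|^{-1/2}$ to $|t|^{-1}$. Even granting that phase mechanics, the away-from-critical-point piece still requires $\int_1^\infty \lambda^{1-2\alpha}|\!\sup|\partial_\lambda a|\,d\lambda$ to converge, which fails when the amplitude and its derivative carry no $\lambda$-decay.

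This is precisely why the paper does not stop at one resolvent-identity split: it iterates the Born series, $\mR_V=\sum_{k=0}^{2K}(-\mR_0V)^k\mR_0+(\mR_0V)^K\mR_V(V\mR_0)^K$ with $K$ large. The finitely many Born terms are handled directly with the full-phase integration by parts/Van der Corput argument (Lemma~\ref{lem:Born hi}), while in the tail the $2K-1$ inner resolvents each contribute a factor $\lambda^{1-2\alpha}\leq\lambda^{-1/2}$ through the limiting absorption principle of Proposition~\ref{prop:LAP}, yielding $|\partial_\lambda^j a_{x,y}(\lambda)|\les\lambda^{-2}\la x\ra^{-1/2}\la y\ra^{-1/2}$ and hence an integrable amplitude (Lemma~\ref{lem:tail hi}). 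If you try to repair your scheme by splitting $M_\pm^{-1}=U+O(\lambda^{1-2\alpha})$ and iterating to gain more powers of $\lambda^{1-2\alpha}$, you are re-deriving exactly this Born expansion. A minor additional point: the derivative bound you invoke, $\|v\,\partial_\lambda\mR_0^\pm v\|_{L^2\to L^2}\les\lambda^{1-2\alpha}$, needs the heavier weights $\la x\ra^{-3/2-}$ (so $\beta>3$, cf.\ the $j=1$ case of Proposition~\ref{prop:LAP}), not $\beta>\frac52$ as you state.
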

In the high energy argument we don't utilize any cancellation from the difference of the `+' and `-' limiting resolvents.  We drop the superscript and note that the arguments presented handle both cases.
As usual, we iterate the resolvent identity to form a Born series
$$
	\mR_V=\sum_{k=0}^{2K} (-\mR_0V)^k\mR_0+ (\mR_0V)^K\mR_V (V\mR_0)^K,
$$
and bound the finite Born series terms and the tail separately.
We first consider the contribution of the $k^{th}$ Born series term:
\begin{lemma}\label{lem:Born hi}
	
	Fix $k>0$ and $\frac34 \leq \alpha<1$.  If $|V(x)|\les \la x\ra^{-\beta}$ for some $\beta>2\alpha$,  then
	$$
		\sup_{L\geq 1}\sup_{x,y\in\R^2}\bigg|\int_0^\infty e^{it\lambda^{2\alpha}} \lambda^{4\alpha-3}\widetilde \chi(\lambda) \chi(\lambda/L) [(\mR_0V)^k\mR_0](\lambda^{2\alpha})(x,y)\, d\lambda \bigg| \les \frac{1}{|t|}.
	$$
	
\end{lemma}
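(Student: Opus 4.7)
Substituting the representation $\mR_0(\lambda^{2\alpha})(x,y) = e^{i\lambda r}r^{2\alpha-2}F(\lambda r)$ from Proposition~\ref{prop:rep} into each of the $k+1$ resolvents of the iterated kernel, the Born term becomes
$$
[(\mR_0 V)^k\mR_0](\lambda^{2\alpha})(x,y) = \int_{\R^{2k}}\bigg(\prod_{j=1}^k V(x_j)\bigg)\bigg(\prod_{j=0}^k r_j^{2\alpha-2}\bigg) e^{i\lambda R}\prod_{j=0}^k F(\lambda r_j)\, d\vec x,
$$
where $x_0:=x$, $x_{k+1}:=y$, $r_j=|x_j-x_{j+1}|$, and $R=\sum_{j=0}^k r_j$. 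Plugging this into the $\lambda$-integral and exchanging the order of integration (legitimate because $\chi(\lambda/L)$ enforces compact $\lambda$-support), it suffices to bound the spatial integral of $|I(\vec r;t,L)|$, where
$$
I(\vec r;t,L) := \int_0^\infty e^{i(t\lambda^{2\alpha}+\lambda R)}\lambda^{4\alpha-3}\widetilde\chi(\lambda)\chi(\lambda/L)\prod_{j=0}^k F(\lambda r_j)\, d\lambda.
$$

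\textbf{The oscillatory $\lambda$-integral.} The phase $\phi(\lambda)=t\lambda^{2\alpha}+\lambda R$ has derivative $\phi'(\lambda)=2\alpha t\lambda^{2\alpha-1}+R$ and a stationary point $\lambda_*=(R/(2\alpha|t|))^{1/(2\alpha-1)}$ that lies in the support of $\widetilde\chi$ only when $t<0$ and $R\gtrsim|t|$. I would split $[1,\infty)$ into the non-stationary regions where $|2\alpha t\lambda^{2\alpha-1}|\gg R$ or $|2\alpha t\lambda^{2\alpha-1}|\ll R$ (in which cases $|\phi'(\lambda)|\gtrsim\max(|t|\lambda^{2\alpha-1},R)$) and a transitional region of width $\sim\lambda_*$ around $\lambda_*$. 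In the non-stationary regions, repeated integration by parts using $\partial_\lambda(e^{it\lambda^{2\alpha}})=2\alpha it\lambda^{2\alpha-1}e^{it\lambda^{2\alpha}}$ together with the derivative bounds $|\partial_\lambda^N F(\lambda r)|\les\lambda^{-N}\la\lambda r\ra^{(3-4\alpha)/2}$ from Proposition~\ref{prop:rep} produces the required $|t|^{-1}$ decay at the cost of at most polynomial factors of $\la\lambda r_j\ra$. In the transitional region, the direct length-times-amplitude bound combined with $|F(\lambda r_j)|\les\la\lambda r_j\ra^{(3-4\alpha)/2}$ yields a contribution that optimizes to the same $|t|^{-1}$ order after balancing $\lambda_*^{4\alpha-2}\cdot\lambda_*$ against the stationary-phase normalization $(|t|\lambda_*^{2\alpha-2})^{-1/2}$.

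\textbf{Spatial integration.} The previous step produces a pointwise bound of the form
$$
|I(\vec r;t,L)|\les |t|^{-1}\prod_{j=0}^k w(r_j),
$$
uniformly in $L\ge 1$, where $w(r)$ is a mild weight consisting of at most polynomial growth in $\la r\ra$ paired with integrable singularities. The remaining integral
$$
\int_{\R^{2k}}\bigg(\prod_{j=1}^k\la x_j\ra^{-\beta}\bigg)\bigg(\prod_{j=0}^k r_j^{2\alpha-2}w(r_j)\bigg)\, d\vec x
$$
is then bounded uniformly in $x,y$ by iterating convolution-type estimates for products of Green's-function kernels against the polynomially decaying potential, using the hypothesis $\beta>2\alpha$ (so $\beta>3/2$) in dimension two to control each $z$-integration at infinity and the bound $2\alpha-2>-2$ to handle the local singularities.

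\textbf{Main obstacle.} The most delicate case is the endpoint $\alpha=\tfrac34$, where the factor $F$ carries no decay in $\la\lambda r\ra$ (since $\tfrac{n+1}{2}-2\alpha=0$) and the spectral multiplier $\lambda^{4\alpha-3}=1$ gives no help. There the full $|t|^{-1}$ gain must be extracted from the phase via integration by parts while the spatial decay needed to carry out the $\vec x$ integrations comes entirely from the Green's-function weight $r_j^{-1/2}$ together with the polynomial decay of $V$. The transitional region around $\lambda_*$ is particularly sensitive because the factors $\la\lambda_* r_j\ra$ can be large; balancing the number of integrations by parts against the size of $\lambda_*$ relative to $R/|t|$ is the key analytic step. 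A separate uniformity point for the cutoff $\chi(\lambda/L)$ is handled by observing that each integration by parts introduces only $L^{-1}\chi'(\lambda/L)$ terms supported near $\lambda\sim L$, which are absorbed by the same non-stationary estimate.
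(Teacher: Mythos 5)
Your plan follows essentially the same route as the paper's proof: insert the representation of Proposition~\ref{prop:rep}, split the $\lambda$-integral at the stationary point $\lambda_0\approx (R/(2\alpha|t|))^{1/(2\alpha-1)}$, integrate by parts in the nonstationary regime using $|\phi'(\lambda)|\gtrsim |t|\lambda^{2\alpha-1}$, apply Van der Corput with $|\phi''(\lambda)|\approx|t|\lambda^{2\alpha-2}$ near $\lambda_0$, and then close the spatial integrals using the decay of $V$ and the integrable Green's-function singularities. The only point needing care (which you correctly flag in your obstacle paragraph, and which is exactly the paper's move) is that near $\lambda_0$ a pure length-times-amplitude bound does not suffice: one must use the Van der Corput factor and absorb the leftover $\lambda_0^{\alpha-\f12}\approx(R/|t|)^{\f12}$ into the Green's-function weight $r_{j_0}^{2\alpha-2}$ of the longest segment $r_{j_0}\approx R$, bounding $\la\lambda r_{j_0}\ra^{\f32-2\alpha}\les(\lambda r_{j_0})^{\f32-2\alpha}$ and the remaining $\la\lambda r_j\ra$ factors by one, so that the growing weight is exactly cancelled rather than merely "polynomial growth" handled by the decay of $V$.
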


\begin{proof}
	Define $z_0:=x$ and $z_{k+1}:=y$ and let $r_j=|z_j-z_{j-1}|$ for $j\geq 1$.  Using Proposition~\ref{prop:rep} we need to control
	\be\label{eqn:hi BS}
		\int_{\R^{2k}}\int_0^\infty e^{it\lambda^{2\alpha}+i\lambda R} \lambda^{4\alpha-3}\widetilde \chi(\lambda) \chi(\lambda/L)
		\prod_{j=1}^{k+1} \frac{F(\lambda r_j) }{r_j^{2-2\alpha}} \prod_{i=1}^k V(z_i)  d\lambda\, d\vec z,
	\ee
	where $R=\sum_{j=1}^{k+1}r_j$ and $d\vec z=dz_1dz_2\cdots dz_{k}$.
	
	If $t>0$ there is no critical point of the phase in the support of the integral.  If $t<0$, the critical point of the phase is at 
	$$
		\lambda_0=\bigg(\frac{-R}{2\alpha t}\bigg)^{\frac{1}{2\alpha -1}}.
	$$
	We first consider the case when $\lambda \not \sim \lambda_0$ when $t<0$, or when $t>0$ and $\lambda \gtrsim 1$.  In these cases we may integrate by parts using
	$$
		e^{it\lambda^{2\alpha}+i\lambda R}= \frac{1}{2it\alpha \lambda^{2\alpha-1}+iR}
		\frac{d}{d\lambda} e^{it\lambda^{2\alpha}+i\lambda R}.
	$$
	Note that in the cases being considered, the denominator in magnitude is $\gtrsim |t|\lambda^{2\alpha-1}$.  This is easy to see when $t>0$, when $t<0$ we note that the two terms in the denominator are not comparable in size, so we use the first one.   Now,
	\begin{multline*}
		|\eqref{eqn:hi BS}|=\bigg| \int_{\R^{2k}}\int_0^\infty e^{it\lambda^{2\alpha}+i\lambda R} \frac{d}{d\lambda} \bigg[\frac{ \lambda^{4\alpha-3}\widetilde \chi(\lambda) \chi_{\lambda\not \sim \lambda_0}(\lambda) \chi(\lambda/L)}{2it\alpha \lambda^{2\alpha-1}+iR}
		\prod_{j=1}^{k+1} \frac{F(\lambda r_j) }{r_j^{2-2\alpha}} \prod_{i=1}^k V(z_i)  \bigg]d\lambda\, d\vec z
		\bigg|\\
		\les \frac{1}{|t|} \int_{\R^{2k}}\int_1^\infty \lambda^{2\alpha-3}
		\prod_{j=1}^{k+1} \frac{\la \lambda r_j\ra ^{\frac32-2\alpha}}{r_j^{2-2\alpha}}
		\prod_{i=1}^k |V(z_i)  |
		d\lambda\, d\vec z \les \frac{1}{|t|}\int_{\R^{2k}} \frac{1}{r_j^{2-2\alpha}}
		\prod_{i=1}^k |V(z_i)  | \, d\vec z\les \frac{1}{|t|}.
	\end{multline*}
	Since $\frac34\leq \alpha<1$, we ignore the $\la \lambda r_j\ra$ contribution and the $\lambda$ integral is finite.  Under the decay conditions on $V$ and the conditions on $\alpha$, the spatial integrals are finite.

	Next we consider when $t<0$ and $1\les \lambda \sim \lambda_0$.    Let $r_{j_0}=\max_{1\leq j\leq k+1}r_j$ and apply Van der Corput.   Note that $|\frac{d^2}{d\lambda^2}  (t\lambda^{2\alpha}+\lambda R)|\approx |t\lambda_0^{2\alpha-2}|$, so that
	\begin{multline*}
			|\eqref{eqn:hi BS}|\les  \int_{\R^{2k}} |t\lambda_0^{2\alpha-2}|^{-\f12} \int_{\lambda \sim \lambda_0}  \bigg|
			 \frac{d}{d\lambda} \bigg[ \lambda^{4\alpha-3}\widetilde \chi(\lambda)  \chi(\lambda/L)
			\prod_{j=1}^{k+1} \frac{F(\lambda r_j) }{r_j^{2-2\alpha}} \prod_{i=1}^k V(z_i)  \bigg] \bigg|d\lambda\,
			d\vec z\\
			\les \int_{\R^{2k}} |t\lambda_0^{2\alpha-2}|^{-\f12} \int_{\lambda \sim \lambda_0}  \bigg|\lambda^{4\alpha-4}\widetilde \chi(\lambda)  \chi(\lambda/L)
			\prod_{j=1}^{k+1} \frac{\la \lambda r_j\ra^{\f32-2\alpha} }{r_j^{2-2\alpha}} \prod_{i=1}^k V(z_i)  \bigg|d\lambda\,
			d\vec z\\
			\les \int_{\R^{2k}} |t\lambda_0^{2\alpha-2}|^{-\f12} \int_{\lambda \sim \lambda_0}  \bigg|\frac{\lambda^{2\alpha-\f52}}{r_{j_0}^\f12}\widetilde \chi(\lambda)  \chi(\lambda/L)
			\prod_{j=1, j\neq j_0}^{k+1} \frac{1 }{r_j^{2-2\alpha}} \prod_{i=1}^k V(z_i)  \bigg|d\lambda\,
			d\vec z\\
			\les \int_{\R^{2k}} |t|^{-\f12}  \bigg|\frac{\lambda_0^{\alpha-\f12}}{r_{j_0}^\f12}
			\prod_{j=1, j\neq j_0}^{k+1} \frac{1 }{r_j^{2-2\alpha}} \prod_{i=1}^k V(z_i)  \bigg|
			d\vec z\les \frac{1}{|t|} .
	\end{multline*}
	Here we used that $\la \lambda r_j\ra ^{\f32-2\alpha}\les 1$ for any $j\neq j_0$ and $\la \lambda r_{j_0}\ra ^{\f32-2\alpha}\les (\lambda r_{j_0}) ^{\f32-2\alpha}$, and the spatial integrals are bounded as above.
	
\end{proof}

We now consider the final term in the Born series.

	\begin{lemma}\label{lem:tail hi}
		
		Fix $K>0$ large enough, $\frac34 \leq \alpha<1$, and assume that $|V(x)|\les \la x\ra^{-\beta}$ for some $\beta>\f52$.  Then, if $H$ has no embedded eigenvalues,
		$$
		\sup_{L\geq 1}\sup_{x,y\in\R^2}\bigg|\int_0^\infty e^{it\lambda^{2\alpha}} \lambda^{4\alpha-3}\widetilde \chi(\lambda) \chi(\lambda/L) [(\mR_0V)^K\mR_V(V\mR_0)^K](\lambda^{2\alpha})(x,y)\, d\lambda \bigg| \les \frac{1}{|t|}.
		$$
		
	\end{lemma}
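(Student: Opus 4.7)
The plan is to follow the two-regime strategy of Lemma~\ref{lem:Born hi}—integration by parts away from the critical point of $\phi(\lambda)=t\lambda^{2\alpha}+\lambda R'$, and Van der Corput near it—while replacing the pointwise bounds on the middle $\mR_V$ factor with the weighted-$L^2$ limiting absorption principle of Proposition~\ref{prop:LAP}.

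First I would expand each of the $2K$ outer free resolvents using Proposition~\ref{prop:rep}, $\mR_0(\lambda^{2\alpha})(a,b)=e^{i\lambda|a-b|}F(\lambda|a-b|)/|a-b|^{2-2\alpha}$.  The product of the resulting phases combines with $e^{it\lambda^{2\alpha}}$ to give $\phi(\lambda)=t\lambda^{2\alpha}+\lambda R'$ with $R'=\sum_{j=1}^{K}r_j+\sum_{l=1}^{K}s_l$, exactly the phase of Lemma~\ref{lem:Born hi}, whose critical point is $\lambda_0=(-R'/(2\alpha t))^{1/(2\alpha-1)}$ when $t<0$.  Partition the $\lambda$-domain into (i) the off-critical region ($t>0$ or $\lambda\not\sim\lambda_0$), where I would integrate by parts once using the denominator $2i\alpha t\lambda^{2\alpha-1}+iR'$ of magnitude $\gtrsim|t|\lambda^{2\alpha-1}$, and (ii) the near-critical region ($t<0$, $\lambda\sim\lambda_0$), where Van der Corput applies with $|\partial_\lambda^2\phi|\sim|t|\lambda^{2\alpha-2}$, as in the Born-series proof.

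The spatial integration over intermediate points is recast as the $L^2$-pairing
\[
G(\lambda,x,y)=\la \Phi_x(\lambda,\cdot),\,\mR_V(\lambda^{2\alpha})\,\Psi_y(\lambda,\cdot)\ra,
\]
with $\Phi_x(\lambda,w):=[(\mR_0V)^K](\lambda^{2\alpha})(x,w)$ and $\Psi_y(\lambda,z):=[(V\mR_0)^K](\lambda^{2\alpha})(z,y)$.  Proposition~\ref{prop:LAP} at $j=0$ then yields $|G|\les\lambda^{1-2\alpha}\|\la w\ra^{\f12+}\Phi_x\|_{L^2}\|\la z\ra^{\f12+}\Psi_y\|_{L^2}$.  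When integration by parts produces a $\lambda$-derivative on $\mR_V$, I would substitute $\partial_\lambda\mR_V(\lambda^{2\alpha})=2\alpha\lambda^{2\alpha-1}\mR_V(\lambda^{2\alpha})^2$ and control $\mR_V^2$ by Proposition~\ref{prop:LAP} at $j=1$ with weights $\la\cdot\ra^{\f32+}$.  The extra spatial decay required for $\Phi_x$ is supplied by the outer $(K-1)$-fold Born chain: using the 2D high-energy bound $|\mR_0(\lambda^{2\alpha})(a,b)|\les|a-b|^{-1/2}$ (valid for $\alpha\in[\f34,1)$, $\lambda\gtrsim1$) from Proposition~\ref{prop:rep} together with $\beta>\f52$, iterated Schur-type estimates give $|\Phi_x(\lambda,w)|\les\la w\ra^{-3-}$ for $|w|\gg1$ uniformly in $x$ and $\lambda\gtrsim1$, hence $\|\la w\ra^{\f32+}\Phi_x\|_{L^2(\R^2)}\les1$ and analogously for $\Psi_y$.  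Bounds on $\partial_\lambda\Phi_x$ use the improved derivative estimate $|\partial_\lambda F|\les\lambda^{-1}\la\lambda r\ra^{\f32-2\alpha}$ of Proposition~\ref{prop:rep}, each differentiation of an $F$-factor contributing a gain of $\lambda^{-1}$.

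The principal obstacle is securing $L$-uniform integrability of the surviving $\lambda$-integral.  A single integration by parts naïvely leaves an integrand of order $\lambda^{-1}$ from the term in which $\partial_\lambda$ hits $\mR_V$, yielding a divergent $\log L$ contribution.  This is remedied by retaining the $\lambda^{(2K-2)(1-2\alpha)}$ decay provided by the $(K-1)$ outer factors on each side: writing $V=vUv$ with $v=|V|^{\f12}$ and $U=\sgn(V)$, each composition of the form $v\mR_0 v$ or $v\mR_V v$ has $L^2\to L^2$ operator norm $\les\lambda^{1-2\alpha}$ by Proposition~\ref{prop:LAP} at $j=0$.  Choosing $K$ sufficiently large drives the $\lambda$-integrand to $\lambda^{-1-\eps}$, delivering the claimed $|t|^{-1}$ bound.
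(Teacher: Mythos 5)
Your toolkit is the right one (the LAP of Proposition~\ref{prop:LAP} for the middle $\mR_V$, integration by parts/Van der Corput in $\lambda$, and large $K$ to generate $\lambda$-decay), but the way you set up the phase creates a genuine structural gap. You propose to expand all $2K$ outer free resolvents, strip off every phase $e^{i\lambda r_j}$, and work with $\phi(\lambda)=t\lambda^{2\alpha}+\lambda R'$ where $R'$ is the sum of \emph{all} segment lengths, exactly as in Lemma~\ref{lem:Born hi} -- while simultaneously estimating the spatial integrals through the pairing $\la \Phi_x,\mR_V\Psi_y\ra$ and Proposition~\ref{prop:LAP}. These two steps are incompatible: the stationary-phase analysis with critical point $\lambda_0=(-R'/2\alpha t)^{1/(2\alpha-1)}$ must be carried out at \emph{fixed} intermediate spatial variables (since $R'$ and the cutoff $\chi_{\lambda\sim\lambda_0}$ depend on them), but at fixed intermediate points the middle factor is the pointwise kernel $\mR_V(\lambda^{2\alpha})(w,z)$, for which no pointwise bound is available -- the LAP is only a weighted $L^2\to L^2$ statement, and using it forces you to integrate out the variables adjacent to $\mR_V$ at each fixed $\lambda$ before any pointwise-in-space oscillation argument. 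The paper's proof resolves exactly this tension by extracting only the external phases: it writes $a_{x,y}(\lambda)=\lambda^{4\alpha-3}\widetilde\chi\,\chi(\lambda/L)e^{-i\lambda|x|}e^{-i\lambda|y|}[(\mR_0V)^K\mR_V(V\mR_0)^K](x,y)$, so the surviving oscillation is $e^{it\lambda^{2\alpha}+i\lambda(|x|+|y|)}$ with a critical point depending only on $|x|+|y|$, and the residual phases $e^{-i\lambda(|x|-r_1)}$ stay inside the amplitude, where a $\lambda$-derivative costs only $\la z_1\ra$ (absorbed by $\beta>\f52$). That device is absent from your outline, and without it you also cannot close the near-critical regime: there the paper needs $|\partial_\lambda^j a_{x,y}|\les\lambda^{-2}\la x\ra^{-\f12}\la y\ra^{-\f12}$ together with $|t|\les|x|+|y|$ (a consequence of $\lambda_0\gtrsim1$ for the phase $\lambda(|x|+|y|)$) to upgrade the Van der Corput $|t|^{-\f12}$ to $|t|^{-1}$; your amplitude bounds are claimed uniform in $x$ (e.g. $\|\la w\ra^{\f32+}\Phi_x\|_{L^2}\les1$) with no decay in $x,y$, and with your $R'$ the relation $|t|\les R'$ does not give $|t|\les|x|+|y|$, so the $|t|^{-1}$ bound does not follow "as in the Born-series proof."

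Two smaller points: the claimed pointwise decay $|\Phi_x(\lambda,w)|\les\la w\ra^{-3-}$ is stronger than what $\beta>\f52$ and the kernel bound $|a-b|^{-\f12}$ can deliver (the last factor only contributes $\la w\ra^{-\beta/ {1}}$-type decay through $V$, and the paper instead extracts decay in $x$, namely $\la x\ra^{-\f12}$, which is what the argument actually needs); and the substitution $\partial_\lambda\mR_V(\lambda^{2\alpha})=2\alpha\lambda^{2\alpha-1}\mR_V(\lambda^{2\alpha})^2$ followed by "control $\mR_V^2$ by Proposition~\ref{prop:LAP} at $j=1$" is not how that proposition is used: composing two limiting resolvents $L^{2,\f12+}\to L^{2,-\f12-}$ does not match weights, and the proposition's $j=1$ statement bounds $\partial_\lambda\mR_V$ directly, which is the estimate you should invoke.
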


	\begin{proof}
		
		Let
		$$
			a_{x,y}(\lambda)=\lambda^{4\alpha-3}\widetilde \chi(\lambda) \chi(\lambda/L) e^{-i\lambda |x|}e^{-i\lambda |y|} [(\mR_0V)^K\mR_V(V\mR_0)^K](\lambda^{2\alpha})(x,y)
		$$
		We claim that
		$$
			|\partial_\lambda^j a_{x,y}(\lambda)|\les \lambda^{-2} \la x\ra^{-\f12} \la y\ra^{-\f12}, \qquad j=0,1.
		$$
		For $j=0$, using the limiting absorption principle in Proposition~\ref{prop:LAP} for both $\mR_V$ and $\mR_0$, we have
		$$
			|a_{x,y}(\lambda)|\les \lambda^{4\alpha-3} (\lambda^{1-2\alpha})^{2K-1} \|\mR_0(x,\cdot) V(\cdot)\la \cdot\ra^{\f12+}\|_{L^2}\| \la \cdot\ra^{\f12+} V(\cdot)\mR_0(\cdot,y)\|_{L^2}
			\les \lambda^{-2}\la x\ra^{-\f12}\la y\ra^{-\f12}.
		$$
		Where in the last inequality we used \eqref{Fbounds} to see that
		$$
			|\mR_0(\lambda^{2\alpha})(x,x_1)V(x_1)\la x_1\ra^{\f12+}|\les \frac{\la \lambda |x-x_1|\ra^{\f32-2\alpha}}{|x-x_1|^{2-2\alpha}\la x_1\ra^{\beta-\f12-}}\les \frac{\lambda^{\f32-2\alpha}}{|x-x_1|^{\f12}\la x_1\ra^{\beta-\f12-}}.
		$$
		So that if $\beta>\f32$, the $L^2$ norm is bounded by $\la x\ra^{-\f12}$.  For $j=1$ we note that for the leading and lagging resolvents we have
		$$
			\partial_\lambda \big[e^{-i\lambda |x|}\mR_0(\lambda^{2\alpha})(x,x_1) \big]=\partial_\lambda \big[e^{-i\lambda (|x|-|x-x_1|)}\frac{F(\lambda |x-x_1|)}{|x-x_1|^{2-2\alpha}} \big].
		$$
		If the derivative hits the phase we bound $|x|-|x-x_1|$ by $\la x_1\ra$.
		the remaining part of the proof is similar but requires $\beta>\f52$ since one needs to account for a larger weight.

		It suffices to consider
		$$
			\int_1^\infty e^{it\lambda^{2\alpha}+i\lambda(|x|+|y|)} a_{x,y}(\lambda)\, d\lambda.
		$$
		It is clear that this is bounded by one uniformly in $x,y$.  For the time decay, as in the proof of Lemma~\ref{lem:Born hi}, we consider the cases of $\lambda \sim \lambda_0=(-\frac{|x|+|y|}{2\alpha t})^{\frac1{2\alpha-1}}$ and $\lambda \not \sim \lambda_0$.
		
		In the case when $\lambda \not \sim \lambda_0$, one integration by parts results in
		$$
			\bigg|\int_1^\infty e^{it\lambda^{2\alpha}+i\lambda(|x|+|y|)} a_{x,y}(\lambda)\, d\lambda\bigg|\les \frac{1}{|t|} \int_1^\infty \lambda^{-2-2\alpha}\, d\lambda \les \frac{1}{|t|}.
		$$
		When $\lambda \sim \lambda_0$, by Van der Corput, we have
		$$
			\bigg|\int_1^\infty e^{it\lambda^{2\alpha}+i\lambda(|x|+|y|)} a_{x,y}(\lambda)\, d\lambda\bigg|\les |t|^{-\f12} \la x\ra^{-\f12}\la y\ra^{-\f12} |\lambda_0|^{1-\alpha} \int_{\lambda \sim \lambda_0} \lambda^{-2}\, d\lambda
			\les \frac{\la x\ra^{-\f12}\la y\ra^{-\f12}}{|t|^{\f12}}\les |t|^{-1}.
		$$
		Where we used that $\lambda_0\gtrsim 1$, which in particular implies that $|t|\les |x|+|y|$.

	\end{proof}

Proposition~\ref{prop:2d hi} now follows from the Born series expansion and Lemmas~\ref{lem:Born hi} and \ref{lem:tail hi}.

\subsection{Low energy}

We now consider the low energy, when $0<\lambda<\lambda_0$ for a sufficiently small constant $\lambda_0\ll1$.
We utilize the symmetric resolvent identity,
\be\label{eqn:sym res}
	\mR_V^\pm(\lambda^{2\alpha})=\mR_0^\pm(\lambda^{2\alpha})-\mR_0^\pm(\lambda^{2\alpha})v M_\pm^{-1}(\lambda) v\mR_0^\pm(\lambda^{2\alpha}),
\ee
where $v=|V|^{\f12}$, $U=\sgn(V)$ and $M^\pm(\lambda)=U+v\mR_0^\pm(\lambda^{2\alpha})v$.  By the assumption that zero is regular, we have that $M^\pm(0)=U+v\mR_0^\pm(0)v$ is invertible on $L^2(\R^2)$.  The following proposition finishes the proof of Theorem~\ref{thm:main2}.

\begin{prop}\label{prop:low tail1}
	
	If zero is a regular point of $H$ and $|V(x)|\les \la x\ra^{-\beta}$ for some $\beta>4$, then
	$$
	\sup_{x,y\in\R^2}\bigg|\int_0^\infty e^{it\lambda^{2\alpha}} \lambda^{4\alpha-3} \chi(\lambda)  [\mR_V^+-\mR_V^-](\lambda^{2\alpha})(x,y)\, d\lambda \bigg| \les \frac{1}{|t|}.
	$$
	
\end{prop}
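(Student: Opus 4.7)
My plan is to start from the symmetric resolvent identity \eqref{eqn:sym res} and rearrange so that every contribution to $\mR_V^+-\mR_V^-$ beyond the free piece $\mR_0^+-\mR_0^-$ carries at least one explicit factor of $\mR_0^+-\mR_0^-$. This is achieved through the algebraic identity
$$
M_+^{-1}(\lambda)-M_-^{-1}(\lambda)=-M_+^{-1}(\lambda)\,v\,[\mR_0^+(\lambda^{2\alpha})-\mR_0^-(\lambda^{2\alpha})]\,v\,M_-^{-1}(\lambda),
$$
combined with telescoping of $\mR_0^\pm$ inside the quadratic correction. By \eqref{Frep}, the singled-out factor produces an oscillatory kernel of size $\lambda^{2-2\alpha}$ times bounded $F_\pm(\lambda r)$ with derivative bounds supplied by \eqref{Fbounds2}. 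Combined with the weight $\lambda^{4\alpha-3}$, the effective $\lambda$-amplitude scales like $\lambda^{2\alpha-1}$, integrable on $(0,1)$ since $2\alpha-1\geq \tfrac12>0$; the case $|t|\les 1$ follows immediately.

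Next I would establish uniform control of $M_\pm^{-1}(\lambda)$ and its first $\lambda$-derivative on the weighted $L^2$ space relevant for the symmetric resolvent identity. Proposition~\ref{prop:R0 exp} in the regime $n=2<4\alpha$ gives $\mR_0^\pm(\lambda^{2\alpha})(x,y)=C_\alpha|x-y|^{2\alpha-2}+\mathcal{O}(\lambda^{2-2\alpha})$, so $M_\pm(\lambda)=M(0)+E_\pm(\lambda)$ with $\|E_\pm(\lambda)\|\les\lambda^{2-2\alpha}$ (the Hilbert--Schmidt-type bound needed for $vE_\pm v$ follows from the decay of $v$). Regularity of zero makes $M(0)^{-1}$ bounded on $L^2$, and a Neumann series then yields $M_\pm^{-1}(\lambda)=M(0)^{-1}+\mathcal{O}(\lambda^{2-2\alpha})$ uniformly for small $\lambda$. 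Differentiating through $\partial_\lambda M_\pm^{-1}=-M_\pm^{-1}v(\partial_\lambda\mR_0^\pm)vM_\pm^{-1}$ and using $|\partial_\lambda \mR_0^\pm(\lambda^{2\alpha})(x,y)|\les\lambda^{1-2\alpha}|x-y|^{2\alpha-2}$ from \eqref{Fbounds2} produces the matching derivative bound $\|\partial_\lambda M_\pm^{-1}(\lambda)\|\les \lambda^{1-2\alpha}$, which is integrable near $0$ since $1-2\alpha>-1$.

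With these estimates in hand, each piece of the decomposition reduces to a spatial integral against $\prod|V(z_i)|$ of a $\lambda$-oscillatory integral of the form
$$
I(\vec z,t)=\int_0^\infty e^{it\lambda^{2\alpha}+i\epsilon\lambda R(\vec z)}\,a(\lambda,\vec z)\,\chi(\lambda)\,d\lambda,
$$
with $\epsilon\in\{-1,0,+1\}$, $R(\vec z)\geq 0$ a sum of path lengths, $|a|\les\lambda^{2\alpha-1}$, and $\int_0^1|\partial_\lambda a|\,d\lambda<\infty$ modulo polynomial $\vec z$-weights. Following the high-energy template of Lemmas~\ref{lem:Born hi}--\ref{lem:tail hi}, I would split the $\lambda$ range according to proximity to the critical point $\lambda_0=(R/(2\alpha|t|))^{1/(2\alpha-1)}$ of the phase. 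Away from $\lambda_0$, and in particular when $t>0$, one integration by parts exploits $|\partial_\lambda\phi|\gtrsim |t|\lambda^{2\alpha-1}$ and produces the desired factor $|t|^{-1}$. Near $\lambda_0$, van der Corput with $|\partial_\lambda^2\phi|\sim |t|\lambda_0^{2\alpha-2}$, combined with the $\la\lambda r\ra^{-1/2}$ decay of $F_\pm$ at large argument, will yield $|t|^{-1}$ times spatial factors absorbed by $|V|$ and by the Green's function singularities $|z_i-z_{i+1}|^{2\alpha-2}$.

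The hardest step will be controlling the low regularity of $M_\pm^{-1}(\lambda)$ at the threshold: it is only H\"older of order $2-2\alpha\in(0,\tfrac12]$, so only one $\lambda$-derivative is genuinely available, and that derivative already diverges as $\lambda^{1-2\alpha}$. This is precisely what van der Corput can absorb---since only BV amplitudes are needed---but it leaves no margin and forces careful placement of each $\partial_\lambda$ to avoid creating worse singularities. A secondary challenge is the weight bookkeeping: each $v$ supplies decay $\la\cdot\ra^{-\beta/2}$, the singularities $|z_i-z_{i+1}|^{2\alpha-2}$ of the Green's function must be absorbed by that decay, and the $\la\cdot\ra^{1/2+}$ weights demanded by the weighted $L^2\to L^2$ mapping of $M_\pm^{-1}$ must fit inside as well; totaling these contributions is what pins down the threshold $\beta>4$.
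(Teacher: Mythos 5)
Your proposal follows essentially the same route as the paper's proof: the symmetric resolvent identity together with the algebraic identity for $M_+^{-1}-M_-^{-1}$ to pull out a factor of $\mR_0^+-\mR_0^-$, Neumann-series inversion of $M_\pm(\lambda)$ from regularity of zero with one $\lambda$-derivative of the inverse, and then integration by parts away from the stationary point plus van der Corput near it, trading the $\la \lambda r\ra^{-1/2}$ decay against $\lambda_0^{2\alpha-1}\sim R/|t|$ to upgrade $|t|^{-1/2}$ to $|t|^{-1}$. One small slip: the pointwise bound $|\partial_\lambda \mR_0^\pm(\lambda^{2\alpha})(x,y)|\les \lambda^{1-2\alpha}|x-y|^{2\alpha-2}$ is not correct as stated, since the term where $\partial_\lambda$ hits the phase $e^{\pm i\lambda r}$ contributes $r^{2\alpha-1}|F(\lambda r)|$, which grows in $r$; this is harmless because after sandwiching by $v$ the growth is absorbed by the decay $\beta>4$ (and the paper in fact only proves, and only needs, the weaker bound $\lambda^{1-}|\partial_\lambda M_\pm^{-1}(\lambda)|$ bounded, compensating with the small-argument improvement \eqref{Fbounds2}).
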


We first prove the following lemma.

\begin{lemma}\label{lem:Minv 2}
	
	For sufficiently small $\lambda_0$, if $|V(x)|\les \la x\ra^{-\beta}$ for some $\beta>3$ and zero is regular, then the operators $M^\pm(\lambda)$ are invertible on $L^2$.  Furthermore,
	$$
		\bigg\|\sup_{0<\lambda<\lambda_0} |M_\pm^{-1}(\lambda)|+\lambda^{1-} |\partial_\lambda M_\pm^{-1}(\lambda)|\,\bigg\|_{L^2\to L^2}<\infty.
	$$
	In addition,
	$$
		\bigg\|\sup_{0<\lambda<\lambda_0} \lambda^{2\alpha-2}|[M_+^{-1}-M_-^{-1}](\lambda)|+\lambda^{2\alpha-1-} |\partial_\lambda [M_+^{-1}-M_-^{-1}](\lambda)|\,\bigg\|_{L^2\to L^2}<\infty.
	$$
	
\end{lemma}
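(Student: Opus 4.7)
The approach is perturbative. Because zero is regular, $M_\pm(0) = U + v\mR_0^\pm(0)v$ is invertible on $L^2(\R^2)$, and I will treat $M_\pm(\lambda)$ as a small Hilbert--Schmidt perturbation thereof. Write
$$
M_\pm(\lambda) = M_\pm(0) + E_\pm(\lambda), \qquad E_\pm(\lambda) := v\bigl[\mR_0^\pm(\lambda^{2\alpha}) - \mR_0^\pm(0)\bigr]v.
$$
Since $n=2$ and $\alpha\in[\f34,1)$ give $4\alpha>n$, Proposition~\ref{prop:R0 exp} supplies the pointwise error bound $|\mR_0^\pm(\lambda^{2\alpha})(x,y)-\mR_0^\pm(0)(x,y)|\lesssim\lambda^{2-2\alpha}$, uniformly in $r=|x-y|$. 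Since $\beta>3$ implies $v\in L^2(\R^2)$, the Hilbert--Schmidt norm of $E_\pm(\lambda)$ is $O(\lambda^{2-2\alpha})$, hence vanishes as $\lambda\to 0^+$. A Neumann series for $\lambda_0$ sufficiently small then yields $M_\pm^{-1}(\lambda) = (I + M_\pm(0)^{-1}E_\pm(\lambda))^{-1}M_\pm(0)^{-1}$ with uniform bound $\|M_\pm^{-1}(\lambda)\|_{L^2\to L^2}\lesssim 1$.

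For the derivative, differentiate the identity $M_\pm M_\pm^{-1}=I$ to obtain
$$
\partial_\lambda M_\pm^{-1}(\lambda) = -M_\pm^{-1}(\lambda)\bigl(v\,\partial_\lambda\mR_0^\pm(\lambda^{2\alpha})\,v\bigr)M_\pm^{-1}(\lambda),
$$
so it suffices to bound the middle factor in Hilbert--Schmidt norm. Using the representation $\mR_0^\pm(\lambda^{2\alpha})(x,y) = e^{\pm i\lambda r}F(\lambda r)/r^{2-2\alpha}$ of Proposition~\ref{prop:rep}, the derivative produces the two summands $irF(\lambda r)$ and $\partial_\lambda F(\lambda r)$. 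Splitting into the regimes $\lambda r\lesssim 1$ (where \eqref{Fbounds2} applies and $r\lesssim 1/\lambda$ absorbs the $r^{2\alpha-1}$ factor from differentiating the phase) and $\lambda r\gtrsim 1$ (where \eqref{Fbounds} applies and $r^{-1/2}\lesssim\lambda^{1/2}$) yields the pointwise bound
$$
|\partial_\lambda\mR_0^\pm(\lambda^{2\alpha})(x,y)|\lesssim\lambda^{1-2\alpha}+\lambda^{3/2-2\alpha}r^{1/2}.
$$
The Hilbert--Schmidt norm of the first summand is $\lambda^{1-2\alpha}\|v\|_{L^2}^2$ (needing $\beta>2$), while that of the second is controlled using $\int v^2(x)\la x\ra\,dx<\infty$, which holds precisely when $\beta>3$. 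Thus $\|\partial_\lambda M_\pm^{-1}\|_{L^2\to L^2}\lesssim\lambda^{1-2\alpha}$, which is $\lesssim\lambda^{-1+}$ with room to spare since $\alpha<1$.

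For the difference, apply the same scheme to $M_+^{-1}-M_-^{-1} = -M_+^{-1}\bigl(v[\mR_0^+(\lambda^{2\alpha})-\mR_0^-(\lambda^{2\alpha})]v\bigr)M_-^{-1}$. Using \eqref{Frep} together with $|F_\pm(\lambda r)|\lesssim\la\lambda r\ra^{-1/2}$, the central factor has Hilbert--Schmidt norm $\lesssim\lambda^{2-2\alpha}\|v\|_{L^2}^2$, proving the first inequality. For its $\partial_\lambda$ derivative, distribute the derivative across the three factors. The dominant contribution comes from the central factor: differentiating the $\lambda^{2-2\alpha}$ prefactor in \eqref{Frep} costs one power of $\lambda$, producing $O(\lambda^{1-2\alpha})$, while the remaining pieces involving $\partial_\lambda F_\pm$ are controlled via \eqref{Fbounds2} by exactly the case split from the previous paragraph. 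The outer two summands combine the $O(\lambda^{1-2\alpha})$ bound on $\partial_\lambda M_\pm^{-1}$ with the $O(\lambda^{2-2\alpha})$ bound on the central factor, and are of strictly smaller order.

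The main technical point is the Hilbert--Schmidt bookkeeping in the derivative estimate. Once the regime $\lambda r\gtrsim 1$ is entered, the pointwise bound on $\partial_\lambda\mR_0^\pm(\lambda^{2\alpha})$ carries an $r^{1/2}$-growing contribution from $\partial_\lambda$ hitting the oscillatory phase $e^{\pm i\lambda r}$, and the hypothesis $\beta>3$ is just strong enough to render $v(x)r^{1/2}v(y)$ Hilbert--Schmidt on $\R^2\times\R^2$. A less careful analysis would integrate $r^{4\alpha-2}v^2(x)v^2(y)$ in the short-distance regime and suggest the stronger requirement $\beta>4\alpha$, but the observation that $r^{2\alpha-1}\lesssim\lambda^{1-2\alpha}$ whenever $\lambda r\lesssim 1$ eliminates that need.
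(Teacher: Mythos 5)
Your argument is essentially the paper's: both expand $M^\pm(\lambda)=T_0+\mathcal E(\lambda)$ with $T_0=U+v\mR_0^\pm(0)v$, invert by a Neumann series for small $\lambda_0$ using the $O(\lambda^{2-2\alpha})$ error supplied by Proposition~\ref{prop:R0 exp}, and then bound $\partial_\lambda M_\pm^{-1}$ and the $+/-$ difference through the resolvent identity combined with the kernel bounds of Proposition~\ref{prop:rep}, split over $\lambda r\les 1$ and $\lambda r\gtrsim 1$, with the same accounting that the $r^{1/2}$ growth coming from the phase is what forces $\beta>3$. The one point you gloss over is that in the lemma the supremum over $\lambda$ sits inside the $L^2\to L^2$ norm and the kernels carry absolute values, so what is claimed is a uniform \emph{absolute} bound, not merely a uniform operator-norm bound for each fixed $\lambda$. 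Your estimates are all pointwise kernel bounds dominated by $\lambda$-independent Hilbert--Schmidt kernels, so the upgrade is immediate provided one also knows that $T_0^{-1}$ itself is absolutely bounded; the paper supplies this by writing $T_0^{-1}=U[I-(v\mR_0^+(0)v)T_0^{-1}]$, i.e. $U$ plus a Hilbert--Schmidt operator. Adding that remark closes the only gap; otherwise the two proofs coincide, and your bound $\lambda^{1-2\alpha}$ for $\partial_\lambda M_\pm^{-1}$ is in fact slightly sharper than the $\lambda^{-1+}$ weight the statement requires.
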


\begin{proof}
	
	From Proposition~\ref{prop:R0 exp}, we write
	$$
		M^+(\lambda)=U+v\mR_0^+(0)v+\mathcal E(\lambda), \qquad \mathcal E(\lambda)=O(\lambda^{2-2\alpha} |v|(x)|v|(y)).
	$$
	If $|V(x)|\les \la x\ra^{-\beta}$ for $\beta>2$, we have
	$$
		\sup_{0<\lambda<\lambda_0} |\mathcal E(\lambda)|\les \lambda_0^{2-2\alpha}\la x\ra^{-1-}\la y\ra^{-1-},
	$$
	which is bounded on $L^2$.
	
	Let $T_0=U+v\mR_0^+(0)v$, by a standard argument $T_0^{-1}$ is absolutely bounded.  Namely, we since $|[v\mR_0^+v](0)(x,y)|\les   \la x\ra^{-\frac{\beta}2} I_{2\alpha}(x,y) \la y\ra^{-\frac{\beta}2}$ with $I_{2\alpha}$ the fractional integral operator.  As in the proof of Lemma~\ref{lem:spec small a}, the resulting operator is bounded on $L^2$, and in this case is even Hilbert-Schmidt.  By the resolvent identity, we have $T_0^{-1}=U[I-(v\mR_0^+(0)v)T_0^{-1}]$. The first term, $U$ is clearly absolutely bounded while $v\mR_0^+(0)vT_0^{-1}$ is the composition of a Hilbert-Schmidt and bounded operator and is hence Hilbert-Schmidt and consequently absolutely bounded.
	
	So, the claim follows by a Neumann series expansion  for sufficiently small $\lambda_0$.  For the derivative, we use the resolvent identity to write
	$$
		\partial_\lambda M_+^{-1}(\lambda)= M_+^{-1}(\lambda)v\partial_\lambda \mR_0^+(\lambda^{2\alpha})v M_+^{-1}(\lambda).
	$$
	We note that, by Proposition~\ref{prop:R0 exp} we have
	\begin{multline*}
		\lambda^{1-}|\partial_\lambda \mR_0^+(\lambda^{2\alpha})(x,y)|\les \lambda^{1-}|x-y|\frac{\la \lambda |x-y|\ra^{\f32-2\alpha}}{|x-y|^{2-2\alpha}}+\lambda^{1-} \lambda^{-1}(\lambda |x-y|)^{0+}\frac{\la \lambda |x-y|\ra^{\f32-2\alpha}}{|x-y|^{2-2\alpha}}\\
		\les \frac{1}{|x-y|^{2-2\alpha-}}+|x-y|^{\f12}.
	\end{multline*}
	The last bound is seen by considering the cases of $\lambda|x-y|<1$ and $\lambda |x-y|\geq 1$ separately.  Then
	$$
		\sup_{0<\lambda<\lambda_0} \lambda^{1-}|\partial_\lambda[ v\mR_0^+(\lambda^{2\alpha})v](x,y)|
	$$
	is bounded on $L^2$ provided $\beta>3$.
	
	The second claim follows from the resolvent identity:
	\begin{multline*}
		[M_+^{-1}-M_{-}^{-1}](\lambda)=-M_{+}^{-1}(\lambda)v[\mR_0^+(\lambda^{2\alpha})-\mR_0^-(\lambda^{2\alpha})]vM_{-}^{-1}(\lambda)\\
		=-\lambda^{2-2\alpha}M_{+}^{-1}(\lambda)v[e^{i\lambda r}F_+(\lambda r)+e^{-i\lambda r} F_{-}(\lambda r)]vM_{-}^{-1}(\lambda)
	\end{multline*}
	The claim now follows as above from \eqref{Fbounds2} and the bounds on $M_{\pm}^{-1}$.
	
\end{proof}

\begin{proof}[Proof of Proposition~\ref{prop:low tail1}]

	It suffices to consider the contribution of the following operators to $\mR_V^+-\mR_V^-$ in \eqref{eqn:sym res}:
	$\mR_0^-(\lambda^{2\alpha})v M_+^{-1}(\lambda) v[\mR_0^+(\lambda^{2\alpha})-\mR_0^-(\lambda^{2\alpha})]$ and $\mR_0^-(\lambda^{2\alpha})v [M_+^{-1}-M_-^{-1}](\lambda) v\mR_0^+(\lambda^{2\alpha})$.  In both cases we consider an operator $\Gamma(\lambda)$ of the form where
	$$
	\widetilde \Gamma:=\sup_{0<\lambda<\lambda_0}( |\Gamma(\lambda)|+\lambda^{1-} |\Gamma'(\lambda)|)
	$$
	is bounded on $L^2$.  By Lemma~\ref{lem:Minv 2} both $M_+^{-1}(\lambda)$ and $\lambda^{2\alpha-2 }[M_+^{-1}-M_-^{-1}](\lambda)$ satisfy this bound.
	
	By Proposition~\ref{prop:R0 exp} and the definition of $\Gamma(\lambda)$ above, we need to control
	\be\label{eqn:low tail goal}
		\int_0^1  e^{it\lambda^{2\alpha}+i\lambda (|x|\mp |y|)} \lambda^{2\alpha-1}\chi(\lambda)  a_{x,y}(\lambda) \, d\lambda,
	\ee
	where (with $r_1=|x-z_1|$ and $r_2=|z_2-y|$)
	\be
		a_{x,y}(\lambda)=\int_{\R^4}  e^{i\lambda (r_1- |x|\pm (r_2-|y|))} \frac{F(\lambda r_1)}{r_1^{2-2\alpha}} v(z_1)\Gamma(\lambda)(z_1,z_2)v(z_2)
		\bigg(\frac{F(\lambda r_2)}{ r_2^{2-2\alpha}}+F_{\pm}(\lambda r_2) \bigg)\, dz_1 \, dz_2.
	\ee
	Note that, using $|F(\cdot)|,|F_\pm(\cdot)|\les 1$, we have
	$$
		|a_{x,y}(\lambda)|\les \bigg\| \frac{v(\cdot)}{|x-\cdot|^{2-2\alpha}}\bigg\|_{L^2} \| \widetilde \Gamma \|_{L^2\to L^2}\bigg\| v(\cdot)\bigg(1+\frac{1}{|\cdot-y|^{2-2\alpha}}\bigg)\bigg\|_{L^2}\les 1,
	$$
	uniformly in $x,y\in \R^2$, provided that $\beta>2$.
	
	On the other hand, assuming $|y|>|x|$ and 
	using $|F(\lambda r_2)|\les \lambda^{\frac32-2\alpha} r_2^{\frac32-2\alpha}$ and $|F_{\pm}(\lambda r_2)|\les \lambda^{-\f12}r_2^{-\f12}$, we have (provided $\beta>2$)
	$$
		|a_{x,y}(\lambda)|\les \lambda^{-\f12} \bigg\| \frac{v(\cdot)}{|x-\cdot|^{2-2\alpha}}\bigg\|_{L^2} \| \widetilde \Gamma \|_{L^2\to L^2}\bigg\| \frac{v(\cdot)}{|\cdot-y|^{\f12}}\bigg\|_{L^2}\les \lambda^{-\f12} \la y\ra^{-\f12},
	$$
	where we used that $\frac32-2\alpha>-\f12$ and $0<\lambda <1$.  The case of $|x|>|y|$ follows similarly with a bound of $\lambda^{-\f12}\la x\ra^{-\f12}$.	 So that, 
	$|a_{x,y}(\lambda)|\les \min(1,\lambda^{-\f12} (\la x\ra +\la y\ra)^{-\f12})$. 	
	
	Further, using \eqref{Fbounds}, assuming $|y|>|x|$ and $\beta>4$ we have
	$$
			|\partial_\lambda a_{x,y}(\lambda)|\les \lambda^{-\f32} \bigg\| \frac{v(\cdot)\la \cdot\ra}{|x-\cdot|^{2-2\alpha}}\bigg\|_{L^2} \| \widetilde \Gamma \|_{L^2\to L^2}\bigg\| \frac{v(\cdot)\la \cdot\ra}{|\cdot-y|^{\f12}}\bigg\|_{L^2}\les \lambda^{-\f32} (\la x\ra +\la y\ra)^{-\f12}.
	$$
	Here we used that $|r_1-|x||\les \la z_1\ra$, and note that the case of $|x|>|y|$ follows similarly.

	On the other hand, using \eqref{Fbounds2}, we have $|\partial_\lambda F(\lambda r)|,|\partial_\lambda F_\pm(\lambda r)|\les \lambda^{-1}(\lambda r)^{0+}$ we have 
	$$
		|\partial_\lambda a_{x,y}(\lambda)|\les \lambda^{-1+} \bigg\| \frac{v(\cdot)\la \cdot\ra}{|x-\cdot|^{2-2\alpha-}}\bigg\|_{L^2} \| \widetilde \Gamma \|_{L^2\to L^2}\bigg\| v(\cdot)\la \cdot\ra\la \cdot-y\ra^{0+}\bigg( 1+\frac{1}{|\cdot-y|^{2-2\alpha}}\bigg)\bigg\|_{L^2}\les \lambda^{-1}(\lambda \la y\ra)^{0+}.
	$$
	So that
	 $|\partial_\lambda a_{x,y}(\lambda)|\les \lambda^{-1}\min((\lambda(\la x\ra + \la y\ra))^{0+},(\lambda (\la x\ra +\la y\ra))^{-\f12})$.  We note that this is an $L^1$ function of $\lambda$ uniformly in $x,y$.

	When $\lambda \sim \lambda_0$, by Van der Corput we have
	\begin{multline*}
		|t|^{-\frac{1}{2}}\lambda_0^{1-\alpha} \int_{\lambda \sim \lambda_0} |\partial_\lambda [\lambda^{2\alpha-1} a_{x,y}(\lambda)]|\, d\lambda \les |t|^{-\f12}\lambda_0^{1-\alpha} \int_{\lambda \sim \lambda_0} \lambda^{2\alpha-\f52} (\la x\ra +\la y\ra)^{-\f12}\, d\lambda\\
		\les |t|^{-\f12} \lambda_0^{\alpha-\f12}(\la x\ra +\la y\ra)^{-\f12}\les 
		|t|^{-\f12}  \bigg(\frac{|x|\mp |y|}{|t|}\bigg)^{\f12}(\la x\ra +\la y\ra)^{-\f12}\les |t|^{-1}.
	\end{multline*}
	
	When $\lambda \not \sim \lambda_0$ we  integrate by parts to see the bound
	$$
		\bigg|\frac{d}{d\lambda}\bigg[ \frac{\lambda^{2\alpha-1}}{t\lambda^{2\alpha-1}+(|x|\mp |y|)}a_{x,y}(\lambda)\bigg]\bigg|\les |\sup_{\lambda} a_{x,y}(\lambda)|\, \bigg|\frac{d}{d\lambda} \frac{\lambda^{2\alpha-1}}{t\lambda^{2\alpha-1}+(|x|\mp |y|)}\bigg|+
		\frac{1}{|t|}|\partial_\lambda a_{x,y}(\lambda)|.
	$$
	The contribution of the first term is bounded by its supremum on the support of the integral since it changes sign finitely many times, which yield a bound of $|t|^{-1}$.  The contribution of the second term is also $|t|^{-1}$ since $\partial_\lambda a_{x,y}(\lambda)$ is in $L^1_\lambda$ uniformly in $x,y$.

\end{proof}

\section{Proof of Theorem~\ref{thm:main n}}\label{sec:nd}

As in the $n=2$ argument, employing the Stone's formula, Theorem~\ref{thm:main n} follows by proving
$$
	\sup_{L\geq 1}\sup_{x,y\in\R^n}\bigg|\int_0^\infty e^{it\lambda^{2\alpha}} \lambda^{2\alpha-1} \chi(\lambda/L) [\mR_V^+-\mR_V^-](\lambda^{2\alpha})(x,y)\, d\lambda \bigg| \les |t|^{-\frac{n}{2\alpha}}.
$$
This will again be done in two subsections by addressing high energies and low energies separately.

\subsection{High energy}

In this section we prove the following bound.
\begin{prop}\label{prop:high energy n}
	
	Fix  $\frac{n+1}{4} \leq \alpha<\frac{n}2$.  If $H$ has no embedded eigenvalues and $|V(x)|\les \la x\ra^{-\beta}$ for some $\beta>\max(\frac{n+1}{2},\frac{n}{2\alpha}+\f52)$,  then
	$$
	\sup_{L\geq 1}\sup_{x,y\in\R^n}\bigg|\int_0^\infty e^{it\lambda^{2\alpha}} \lambda^{2\alpha-1}\widetilde \chi(\lambda) \chi(\lambda/L) \mR_V^\pm(\lambda^{2\alpha})(x,y)\, d\lambda \bigg| \les |t|^{-\frac{n}{2\alpha}}.
	$$
	
\end{prop}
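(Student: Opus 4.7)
The plan is to mirror the two-dimensional high-energy analysis by iterating the resolvent identity to form a Born series
$$
\mR_V^\pm = \sum_{k=0}^{2K}(-\mR_0^\pm V)^k\mR_0^\pm + (\mR_0^\pm V)^K\mR_V^\pm(V\mR_0^\pm)^K,
$$
and handling each piece separately for $K$ chosen sufficiently large (depending on $\alpha$). As in Section~\ref{sec:2d}, no cancellation between the $+$ and $-$ limits is used, so the superscripts can be dropped. Proposition~\ref{prop:rep} and Proposition~\ref{prop:LAP} are the two essential inputs, the former controlling each $\mR_0$ and the latter providing $L^2$-weighted control of $\mR_V$ and its derivatives.

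For each finite Born term $(\mR_0 V)^k\mR_0$, substituting the representation from Proposition~\ref{prop:rep} reduces the estimate to the oscillatory integral
$$
\int_{\R^{nk}}\!\int_0^\infty e^{it\lambda^{2\alpha}+i\lambda R}\lambda^{2\alpha-1}\widetilde\chi(\lambda)\chi(\lambda/L)\prod_{j=1}^{k+1}\frac{F(\lambda r_j)}{r_j^{n-2\alpha}}\prod_{i=1}^k V(z_i)\,d\lambda\,d\vec z,
$$
with $z_0=x$, $z_{k+1}=y$, $r_j=|z_j-z_{j-1}|$, and $R=\sum r_j$. I would split the $\lambda$-integral based on proximity to the critical point $\lambda_0=(-R/(2\alpha t))^{1/(2\alpha-1)}$ (present only when $t<0$). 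Away from $\lambda_0$ the phase derivative satisfies $|\phi'(\lambda)|\gtrsim |t|\lambda^{2\alpha-1}$, and repeated integration by parts yields $|t|^{-N}$ for any $N$. When $\lambda\sim\lambda_0\gtrsim 1$, I would apply Van der Corput against $|\phi''|\sim|t|\lambda_0^{2\alpha-2}$, using the max trick from Lemma~\ref{lem:Born hi}: letting $r_{j_0}=\max_j r_j$ and bounding $|F(\lambda r_{j_0})|\les(\lambda r_{j_0})^{(n+1)/2-2\alpha}$ via \eqref{Fbounds}, which is valid because $\alpha\geq(n+1)/4$ makes the exponent nonpositive. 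Collecting gives an amplitude of size $\lambda_0^{(n-1)/2}r_{j_0}^{(1-n)/2}$ outside the potentials, and Van der Corput produces
$$
|t|^{-1/2}\lambda_0^{(n+1-2\alpha)/2}r_{j_0}^{(1-n)/2}\les |t|^{-n/2},
$$
using $\lambda_0\gtrsim 1$ (hence $r_{j_0}\sim R\gtrsim |t|$) and $(n+1-2\alpha)/2\leq 0$. Since $\alpha\geq 1$ throughout the admissible range in $n\geq 3$, $|t|^{-n/2}\leq |t|^{-n/(2\alpha)}$, and the residual spatial integral $\int\prod_{j\neq j_0}r_j^{2\alpha-n}\prod_i|V(z_i)|\,d\vec z$ is uniformly bounded in $x,y$ via the decay $|V|\les\la\cdot\ra^{-\beta}$ and the local integrability of $r^{2\alpha-n}$ (since $2\alpha<n$).

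For the tail $(\mR_0 V)^K\mR_V(V\mR_0)^K$, following Lemma~\ref{lem:tail hi} I would define
$$
a_{x,y}(\lambda)=\lambda^{2\alpha-1}\widetilde\chi(\lambda)\chi(\lambda/L)e^{-i\lambda|x|}e^{-i\lambda|y|}[(\mR_0 V)^K\mR_V(V\mR_0)^K](\lambda^{2\alpha})(x,y),
$$
and establish weighted pointwise bounds of the form
$$
|\partial_\lambda^j a_{x,y}(\lambda)|\les \lambda^{-M-j}\la x\ra^{(1-n)/2}\la y\ra^{(1-n)/2}
$$
for $j=0,1,\ldots,\lceil n/(2\alpha)\rceil$ and $M$ arbitrarily large by choosing $K$ large. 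This will combine Proposition~\ref{prop:LAP} for $\mR_V$ and its derivatives (requiring $\beta>1+2j$, hence the hypothesis $\beta>n/(2\alpha)+5/2$ to accommodate the derivatives invoked) with the weighted $L^2$ estimate
$$
\|\mR_0(x,\cdot)V(\cdot)\la\cdot\ra^{j+\frac12+}\|_{L^2}\les \lambda^{(n+1)/2-2\alpha}\la x\ra^{(1-n)/2},
$$
which uses the $F$-bound $(\lambda r)^{(n+1)/2-2\alpha}$ for $\lambda r\gtrsim 1$ together with integrability of $|x-z|^{1-n}\la z\ra^{1-2\beta+}$ (requiring $\beta>(n+1)/2+j$). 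The oscillatory integral in $\lambda$ is then treated exactly as before: integration by parts handles $\lambda\not\sim\lambda_0$, while in the stationary regime Van der Corput gives $|t|^{-1/2}\la x\ra^{(1-n)/2}\la y\ra^{(1-n)/2}$, which via $\lambda_0\gtrsim 1\Rightarrow\max(|x|,|y|)\gtrsim|t|$ becomes $|t|^{-n/2}\leq|t|^{-n/(2\alpha)}$.

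The main obstacle will be the derivative bookkeeping for $a_{x,y}$ in the tail: each $\partial_\lambda$ can fall on the phases $e^{-i\lambda|x|}e^{-i\lambda|y|}$ (producing an $\la z\ra$ growth that must be absorbed by $V$), on an $F$ factor (invoking \eqref{Fbounds2}), or on $\mR_V$ (invoking the derivative LAP of Proposition~\ref{prop:LAP}), and tracking how many such derivatives are required to convert the Van der Corput/IBP analysis into the prescribed rate $|t|^{-n/(2\alpha)}$ is precisely what forces the stated lower bound $\beta>\max((n+1)/2,n/(2\alpha)+5/2)$.
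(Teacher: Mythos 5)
Your overall architecture (Born series, Proposition~\ref{prop:rep} for the free kernels, Proposition~\ref{prop:LAP} for the tail, split into $\lambda\sim\lambda_0$ and $\lambda\not\sim\lambda_0$) matches the paper, but the quantitative core of the Born-term estimate has a genuine error. In the stationary regime you keep the factor $\la\lambda r_{j_0}\ra^{\frac{n+1}2-2\alpha}$ only for the largest distance, arrive at $|t|^{-1/2}\lambda_0^{\frac{n+1-2\alpha}{2}}r_{j_0}^{\frac{1-n}2}$, and discard the $\lambda_0$ power by claiming $\frac{n+1-2\alpha}{2}\le 0$. That inequality requires $\alpha\ge\frac{n+1}{2}$, which never holds in the admissible range $\alpha<\frac n2$; the exponent is in fact strictly positive, so the factor cannot be dropped. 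Even if you repair this by substituting $\lambda_0^{2\alpha-1}\sim R/|t|$ and $R\gtrsim|t|$, you only reach $|t|^{-n/2}$, and your closing step ``$|t|^{-n/2}\le|t|^{-n/(2\alpha)}$ since $\alpha\ge1$'' is valid only for $|t|\ge1$; for $|t|\le 1$ it goes the wrong way, so the stated rate is not obtained for all times. The paper avoids this by first integrating by parts once against $e^{it\lambda^{2\alpha}}$ (consuming the $\lambda^{2\alpha-1}$), converting \emph{every} factor $\la\lambda r_j\ra^{\frac{n+1}2-2\alpha}\les(\lambda r_j)^{\frac{n+1}2-2\alpha}$ so that a nonpositive total power of $\lambda_0$ remains, and then trading $|t|^{-3/2}$ against powers of $R$ and $\lambda_0$ through the identity $\lambda_0^{2\alpha-1}\approx R/|t|$, with a case analysis on $\frac{n}{2\alpha}\lessgtr\frac32$; this yields $|t|^{-n/(2\alpha)}$ uniformly in $t$. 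Your non-stationary claim ``repeated integration by parts yields $|t|^{-N}$ for any $N$'' has the same small-time defect (for $|t|\le1$ a bound $|t|^{-2}$ does not imply $|t|^{-n/(2\alpha)}$ since $\frac{n}{2\alpha}<2$), is limited by the finitely many derivative bounds available for $F$, and misses the endpoint $\alpha=\frac{n+1}4$, where a single integration by parts leaves a $\lambda$-integral that is not convergent uniformly in $L$ (the factors $\la\lambda r_j\ra^{\frac{n+1}2-2\alpha}$ give no gain); the paper treats this endpoint separately via $|t\phi'(\lambda)|\gtrsim(|t|\lambda^{2\alpha-1})^{1/2}R^{1/2}$ and the fact that $\frac{n}{2\alpha}\ge\frac32$ there.

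For the tail your scheme is close to the paper's, but the bookkeeping overshoots the hypotheses: you ask for weights $\la x\ra^{\frac{1-n}2}\la y\ra^{\frac{1-n}2}$ and weighted norms $\|\mR_0(x,\cdot)V\la\cdot\ra^{j+\frac12+}\|_2$, which (as you note) need $\beta>\frac{n+1}2+j$ with $j$ up to $2$; for larger $n$ this exceeds the assumed $\beta>\max(\frac{n+1}2,\frac{n}{2\alpha}+\frac52)$. The paper instead distributes a fixed weight budget $\la\cdot\ra^{\frac52-j+}$ across derivative orders and settles for the weaker bound $|\partial_\lambda^j a_{x,y}(\lambda)|\les\lambda^{-2}\la x\ra^{\frac12-\frac{n}{2\alpha}}\la y\ra^{\frac12-\frac{n}{2\alpha}}$, $j\le2$, which is exactly calibrated so that Van der Corput plus $|t|\les|x|+|y|$ gives $|t|^{-\frac{n}{2\alpha}}$ under the stated decay; your weights would again only give $|t|^{-n/2}$ in the stationary regime. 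These gaps (the false exponent sign, the missing small-time and endpoint cases, and the mismatch between your weight requirements and the stated $\beta$) need to be closed before the argument proves the proposition as stated.
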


The following lemma takes care of the contribution of the $k^{th}$ Born series term.
\begin{lemma}\label{lem:Born hi big n}
	
	Fix $k>0$ and $\frac{n+1}{4} \leq \alpha<\frac{n}2$.  If $|V(x)|\les \la x\ra^{-\beta}$ for some $\beta>\frac{n+1}{2}$,  then
	$$
	\sup_{L\geq 1}\sup_{x,y\in\R^n}\bigg|\int_0^\infty e^{it\lambda^{2\alpha}} \lambda^{2\alpha-1}\widetilde \chi(\lambda) \chi(\lambda/L) [(\mR_0V)^k\mR_0](\lambda^{2\alpha})(x,y)\, d\lambda \bigg| \les |t|^{-\frac{n}{2\alpha}}.
	$$
	
\end{lemma}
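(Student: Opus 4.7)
The plan is to adapt the 2D argument of Lemma~\ref{lem:Born hi} to $n\geq 3$ with the refined target decay rate $|t|^{-n/(2\alpha)}$. By Proposition~\ref{prop:rep}, the $k$-th Born term equals
$$
\int_{\R^{nk}}\prod_i V(z_i)\prod_{j=1}^{k+1}r_j^{2\alpha-n}\,G(\vec z)\,d\vec z,
$$
where $z_0=x$, $z_{k+1}=y$, $r_j=|z_j-z_{j-1}|$, $R=\sum_j r_j$, and
$$
G(\vec z):=\int_0^\infty e^{i(t\lambda^{2\alpha}+\lambda R)}\lambda^{2\alpha-1}\widetilde\chi(\lambda)\chi(\lambda/L)\prod_j F(\lambda r_j)\,d\lambda.
$$
Note that $\alpha\in[(n+1)/4,n/2)$ forces $n/(2\alpha)\in(1,2)$, and for $n\geq 3$ we automatically have $\alpha\geq (n+1)/4\geq 1$. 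The $\lambda$-phase $\phi(\lambda)=t\lambda^{2\alpha}+\lambda R$ has a critical point $\lambda_0=(R/(2\alpha|t|))^{1/(2\alpha-1)}$ only when $t<0$, and I split $G(\vec z)$ by whether $\lambda_0$ lies in the cutoff support.

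In the non-stationary regime ($t>0$, or $\lambda_0$ outside the support), $|\phi'(\lambda)|\gtrsim |t|\lambda^{2\alpha-1}$ on the support. Since $\alpha\geq (n+1)/4$ makes $|F(\lambda r)|\les 1$ and $|\partial_\lambda^N F(\lambda r)|\les\lambda^{-N}$ by \eqref{Fbounds}, the amplitude $a(\lambda)=\lambda^{2\alpha-1}\prod F(\lambda r_j)\widetilde\chi(\lambda)\chi(\lambda/L)$ satisfies $|a^{(N)}(\lambda)|\les \lambda^{2\alpha-1-N}$. Two integrations by parts then give $|G(\vec z)|\les |t|^{-2}\int_1^\infty \lambda^{-3}\,d\lambda\les |t|^{-2}$, which beats $|t|^{-n/(2\alpha)}$ since $n/(2\alpha)<2$.

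In the stationary regime ($t<0$ and $\lambda_0$ in the support, forcing $R\gtrsim |t|$), I apply Van der Corput using $|\phi''(\lambda_0)|\sim |t|\lambda_0^{2\alpha-2}$. Selecting $r_{j_0}:=\max_j r_j$ so that $R\sim r_{j_0}\gtrsim |t|$ and hence $\lambda_0 r_{j_0}\gtrsim |t|\gg 1$, I use the sharper bound $|F(\lambda r_{j_0})|\les(\lambda r_{j_0})^{(n+1)/2-2\alpha}$ from \eqref{Fbounds} for that factor and $|F(\lambda r_j)|\les 1$ for $j\neq j_0$, obtaining
$$
|G(\vec z)|\les |t|^{-1/2}\lambda_0^{(n+1)/2-\alpha}\,r_{j_0}^{(n+1)/2-2\alpha}.
$$
Substituting $\lambda_0\sim(r_{j_0}/|t|)^{1/(2\alpha-1)}$ and bundling with $\prod_j r_j^{2\alpha-n}$, the $r_{j_0}$-exponent collapses to $n(1-\alpha)/(2\alpha-1)\leq 0$, and invoking $r_{j_0}\gtrsim |t|$ reduces the $|t|$-exponent to $-n/2\leq -n/(2\alpha)$:
$$
|G(\vec z)|\prod_j r_j^{2\alpha-n}\les |t|^{-n/2}\prod_{j\neq j_0}r_j^{2\alpha-n}\les |t|^{-n/(2\alpha)}\prod_{j\neq j_0}r_j^{2\alpha-n}.
$$

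Finally, I would integrate the residual weight $\prod_{j\neq j_0}r_j^{2\alpha-n}\prod_i |V(z_i)|$ over $\R^{nk}$ and union-bound over the $k+1$ choices of $j_0$. Iterated fractional-integration estimates together with the polynomial decay $|V(z)|\les \la z\ra^{-\beta}$, $\beta>(n+1)/2$, yield a constant uniform in $x,y$. The \emph{main obstacle} I anticipate is the bookkeeping required when the maximizing index $j_0$ varies over $\vec z$, together with verifying uniform convergence of the residual spatial integrals across all configurations; both can be handled by partitioning $\R^{nk}$ according to the ordering of the $r_j$'s and matching the two regimes at the dyadic transition $\lambda_0\sim 1$, mirroring the strategy of Lemma~\ref{lem:Born hi}.
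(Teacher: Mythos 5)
Your overall architecture (split at the stationary point $\lambda_0$, integrate by parts twice off resonance, Van der Corput near $\lambda\sim\lambda_0$ with the maximal distance $r_{j_0}$ absorbing the loss) matches the paper, and your large-time exponent bookkeeping is correct: the $r_{j_0}$-exponent $n(1-\alpha)/(2\alpha-1)\le 0$ and the resulting $|t|^{-n/2}$ are exactly what the paper obtains. However there are two genuine gaps. First, the small-time case $0<|t|\le 1$ is not covered: since $\frac{n}{2\alpha}<2$ and $\frac{n}{2\alpha}\le\frac n2$, the bounds $|t|^{-2}$ and $|t|^{-n/2}$ do \emph{not} ``beat'' $|t|^{-n/(2\alpha)}$ when $|t|<1$ --- the inequality goes the wrong way there. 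The paper spends a substantial part of the proof on $|t|\les 1$: for $\alpha>\frac{n+1}{4}$ a single integration by parts gives $|t|^{-1}$, which suffices since $\frac{n}{2\alpha}>1$, while at the endpoint $\alpha=\frac{n+1}{4}$ the $\lambda$-integral after one integration by parts diverges and one needs the hybrid lower bound $|t\phi'(\lambda)|\gtrsim (|t|\lambda^{2\alpha-1})^{1/2}R^{1/2}$ together with two integrations by parts to reach $|t|^{-3/2}\le|t|^{-n/(2\alpha)}$ (using $\frac{n}{2\alpha}=\frac{2n}{n+1}\ge\frac32$). Your proposal has no mechanism for this regime.

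Second, the spatial integrals you leave behind need not converge. Estimating $|F(\lambda r_j)|\les 1$ for $j\ne j_0$ (and in the non-stationary regime for all $j$) leaves weights $r_j^{2\alpha-n}$, and the iterated integral $\int \la z_1\ra^{-\beta}|x-z_1|^{2\alpha-n}\cdots\,d\vec z$ requires $\beta>2\alpha$ for convergence at infinity of the endpoint variables; since $2\alpha>\frac{n+1}{2}$ whenever $\alpha>\frac{n+1}{4}$, any admissible $\beta\in(\frac{n+1}{2},2\alpha)$ makes your residual integral diverge, so the ``iterated fractional-integration estimates yield a uniform constant'' step fails as stated. The paper avoids this by always converting $\la\lambda r_j\ra^{\frac{n+1}{2}-2\alpha}r_j^{2\alpha-n}\les \lambda^{\frac{n+1}{2}-2\alpha}r_j^{-\frac{n-1}{2}}$ for \emph{every} factor, so each spatial weight is $r_j^{-(n-1)/2}$ (integrable against $\beta>\frac{n+1}{2}$ uniformly in $x,y$), and the price paid in powers of $\lambda$ (or $\lambda_0\gtrsim1$) is then controlled via $\lambda_0^{2\alpha-1}\sim R/|t|$ and the case distinction $\frac{n}{2\alpha}\lessgtr\frac32$. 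This substitution is essentially free and would repair your spatial integrals, but as written the proposal does not close either of these points.
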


\begin{proof}
This contribution of the $k^{th}$ term of the Born series to the Stone's formula representation is an integral of the form
\be\label{eqn:BS hi hi d}
	\int_{\R^{2k}}\int_0^\infty e^{it\lambda^{2\alpha}+i\lambda R}\lambda^{2\alpha-1}\widetilde \chi(\lambda) \chi(\lambda/L) \prod_{j=1}^{k+1} \frac{F(\lambda r_j)}{r_j^{n-2\alpha}} \prod_{i=1}^k V(x_k) \, d\lambda d\vec x,
\ee
where $r_j=|x_j-x_{j-1}|$ and $R=\sum r_j$.  We consider two regimes, first if $\lambda \not \sim \lambda_0=(\frac{-R}{2\alpha t})^{\f1{2\alpha-1}}$ we may integrate by parts twice without boundary terms.  Define $\chi_{\lambda_0}(\lambda)$ to be a smooth cut-off to the neighborhood $\frac12\lambda_0<\lambda <2\lambda_0$, and $\widetilde \chi_{\lambda_0}(\lambda)=1-\chi_{\lambda_0}(\lambda)$.  
Denoting $\lambda^{2\alpha}+\frac{R}{t}:=\phi(\lambda)$, we see (for $|t|\gtrsim 1$)
\begin{multline*}
	|\eqref{eqn:BS hi hi d}| \les \frac{1}{|t|^2} \int_{\R^{2k}}\int_1^\infty  
	\bigg| \partial_\lambda \bigg( \frac{1}{\phi'(\lambda)} \partial_\lambda \bigg(\frac{\lambda^{2\alpha-1}}{\phi'(\lambda)}
	\widetilde \chi(\lambda)\widetilde \chi_{\lambda_0}(\lambda) \chi(\lambda/L) \prod_{j=1}^{k+1} \frac{F(\lambda r_j)}{r_j^{n-2\alpha}}\bigg)\bigg) \prod_{i=1}^k V(x_k) \bigg| \, d\lambda d\vec x\\
	\les \frac{1}{|t|^2}\int_{\R^{2k}} \int_1^\infty \lambda^{-2\alpha-3-k(\frac{n+1}{2}-2\alpha)} \, d\lambda \prod_{j=1}^{k+1} \frac{1}{r_j^{\frac{n-1}{2}}} \prod_{i=1}^k |V(x_k)| \, d\vec x.
\end{multline*}
Here we used that $|\phi'(\lambda)|=|2\alpha \lambda^{2\alpha-1}+R/t|=|2\alpha (\lambda^{2\alpha-1}-\lambda_0^{2\alpha-1})|\gtrsim \lambda^{2\alpha-1}$, \eqref{Fbounds}, and that all derivatives are bounded by division by $\lambda$.  Since $\frac{n+1}{2}-2\alpha<0$ and $-2\alpha-3<-1$, the integral converges.	The spatial integrals are bounded provided $|V(x)|\les \la x\ra^{-\frac{n+1}{2}-}$ by standard arguments.  This suffices to ensure we get a large time bound of size $|t|^{-n/2\alpha}$ as desired.

For small times, we need to consider cases based on $\alpha$ and $n$.  First, if $\alpha>\frac{n+1}{4}$ we can integrate by parts once to see its contribution to \eqref{eqn:BS hi hi d} is bounded by
\begin{multline*}
	\frac{1}{|t|} \int_{\R^{2k}}\int_1^\infty  
	\bigg|  \partial_\lambda \bigg(\frac{\lambda^{2\alpha-1}}{\phi'(\lambda)}
	\widetilde \chi(\lambda) \widetilde \chi_{\lambda_0}(\lambda) \chi(\lambda/L) \prod_{j=1}^{k+1} \frac{F(\lambda r_j)}{r_j^{n-2\alpha}}\bigg) \prod_{i=1}^k V(x_k) \bigg| \, d\lambda d\vec x\\
	\les \frac{1}{|t|}\int_{\R^{2k}} \int_1^\infty \lambda^{-1} \, d\lambda \prod_{j=1}^{k+1} \frac{\la \lambda r_j \ra^{\frac{n+1}{2}-2\alpha }}{r_j^{n-2\alpha}} \prod_{i=1}^k |V(x_k)| \, d\vec x\\
	\les \frac{1}{|t|}\int_{\R^{2k}} \int_1^\infty \lambda^{-1-(k+1)(\frac{n+1}{2}-2\alpha)} \, d\lambda \prod_{j=1}^{k+1} \frac{1 }{r_j^{\frac{n-1}{2}}} \prod_{i=1}^k |V(x_k)| \, d\vec x.
\end{multline*}
Since $\frac{n+1}{2}-2\alpha>0$, the $\lambda$ integral converges.  The spatial integrals converge provided $|V(x)|\les \la x\ra^{-\frac{n+1}{2}-}$.  This suffices for small $|t|\les 1$.

Finally, if $|t|<1$ and $\alpha=\frac{n+1}{4}$ we integrate by parts twice and use $|t\phi'(\lambda)|=|2\alpha t\lambda^{2\alpha-1}+R| \gtrsim (|t|\lambda^{2\alpha-1})^{\f12} R^{\f12}$ to see that 
\begin{multline*}
	\frac{1}{|t|^2} \int_{\R^{2k}}\int_1^\infty  
	\bigg| \partial_\lambda \bigg( \frac{1}{\phi'(\lambda)} \partial_\lambda \bigg(\frac{\lambda^{2\alpha-1}}{\phi'(\lambda)}
	\widetilde \chi(\lambda) \chi(\lambda/L) \prod_{j=1}^{k+1} \frac{F(\lambda r_j)}{r_j^{n-2\alpha}}\bigg)\bigg) \prod_{i=1}^k V(x_k) \bigg| \, d\lambda d\vec x\\
	\les \frac{1}{|t|^{\f32}}\int_{\R^{2k}} \int_1^\infty \frac{\lambda^{-\f32}}{R^{\f12}} \, d\lambda \prod_{j=1}^{k+1} \frac{1}{r_j^{\frac{n-1}{2}}} \prod_{i=1}^k |V(x_k)| \, d\vec x\\
	\les \frac{1}{|t|^{\f32}}\int_{\R^{2k}} \prod_{j=1}^{k+1} \frac{1}{r_j^{\frac{n-1}{2}+\frac{1}{2k+2}}} \prod_{i=1}^k |V(x_k)| \, d\vec x.
\end{multline*}
Since $\frac{n}{2\alpha}=\frac{2n}{n+1}=2-\frac2{n+1}\geq \frac32$, this suffices for small $|t|$.  The spatial integrals converge when $|V(x)|\les \la x\ra^{-\frac{n+1}{2}-}$.

We now consider when $\lambda$ is in a neighborhood of the critical point.  In this case, for $\lambda_0$ to be in the support of $\widetilde \chi(\lambda)$ we must have $|t|\les R$.  We proceed by integrating by parts once, this time without combining the phases to express its contribution to \eqref{eqn:BS hi hi d} as
\begin{multline*} 
	\int_{\R^{2k}}\int_0^\infty e^{it\lambda^{2\alpha}}\lambda^{2\alpha-1}\widetilde \chi(\lambda) \chi_{\lambda_0}(\lambda) \chi(\lambda/L) e^{i\lambda R} \prod_{j=1}^{k+1} \frac{F(\lambda r_j)}{r_j^{n-2\alpha}} \prod_{i=1}^k V(x_k) \, d\lambda d\vec x\\
	=\frac{1}{2\alpha it}\int_{\R^{2k}}\int_0^\infty e^{it\lambda^{2\alpha}} \partial_\lambda \bigg(\widetilde \chi(\lambda) \chi_{\lambda_0}(\lambda) \chi(\lambda/L) e^{i\lambda R}\prod_{j=1}^{k+1} \frac{F(\lambda r_j)}{r_j^{n-2\alpha}}\bigg) \prod_{i=1}^k V(x_k) \, d\lambda d\vec x\\
	:=\frac{1}{t} \int_{\R^{2k}} \int_0^\infty e^{it\lambda^{2\alpha}+i\lambda R} a_{\vec x}(\lambda) \, d\lambda d\vec x,
\end{multline*}
with
\begin{multline*}
	a_{\vec x}(\lambda)=\frac{1}{2\alpha i}\bigg[\partial_\lambda \bigg(\widetilde \chi(\lambda) \chi_{\lambda_0}(\lambda) \chi(\lambda/L) \prod_{j=1}^{k+1} \frac{F(\lambda r_j)}{r_j^{n-2\alpha}}\bigg) \\
	+iR \bigg(\widetilde \chi(\lambda) \chi_{\lambda_0}(\lambda) \chi(\lambda/L) \prod_{j=1}^{k+1} \frac{F(\lambda r_j)}{r_j^{n-2\alpha}}\bigg)  \bigg]
	\prod_{i=1}^k V(x_k) .
\end{multline*}

On the support of $\chi_{\lambda_0}(\lambda)$, we may employ Van der Corput to bound by
\begin{multline*}
	\frac{1}{|t|^{\f32}}\int_{\R^{2k}} \lambda_0^{1-\alpha}  \int_{\lambda \sim \lambda_0}  |\partial_\lambda a_{\vec x}(\lambda)|\, d\lambda \, d\vec x\\ 
	\les \frac{1}{|t|^{\f32}} \int_{\R^{2k}} \lambda_0^{1-\alpha}  \int_{\lambda \sim \lambda_0} (R+\lambda^{-1})\lambda^{-1} \prod_{j=1}^{k+1} \frac{\la \lambda r_j\ra^{\frac{n+1}{2}-2\alpha}}{r_j^{n-2\alpha}}\, d\lambda \prod_{i=1}^k |V(x_k)|\, d\vec x\\
	\les \frac{1}{|t|^{\f32}} \int_{\R^{2k}}	(R+1)\lambda_0^{1-\alpha-(k+1)(2\alpha-\frac{n+1}{2})}\prod_{j=1}^{k+1} \frac{1}{ r_j^{\frac{n-1}{2} }} \prod_{i=1}^k |V(x_k)|\, d\vec x.
\end{multline*}
Where we used that $\lambda_0\gtrsim1$ in the last bound.
From here, we must consider cases based on the relative sizes of $n$ and $\alpha$, specifically whether $\frac{n}{2\alpha}\leq \f32$ or $\frac {n}{2\alpha}> \f32$.   We first consider when $\frac{n}{2\alpha}\leq \f32$, so that $0\leq \frac{3}{2}-\frac{n}{2\alpha}<\f12$.  Note that $\frac{n}{3}\leq \alpha$ in this case. Using that $|t|\approx \lambda_0^{2\alpha-1}/R$ we have
$$
	\frac{1}{|t|^{\f32}}\approx \frac{1}{|t|^{\frac{n}{2\alpha}}} \bigg(\frac{\lambda_0^{2\alpha-1}}{R}
	\bigg)^{\f32-\frac{n}{2\alpha}}=\frac{1}{|t|^{\frac{n}{2\alpha}}}\frac{\lambda_0^{(2\alpha-1)(\f32-\frac{n}{2\alpha})}}{R^{{\f32-\frac{n}{2\alpha}}}}
$$

Now, since $k\geq 1$, we have the total power of $\lambda_0$ is
$$
	(2\alpha-1)\bigg(\frac32-\frac{n}{2\alpha}\bigg)+1-\alpha-(k+1)\bigg(2\alpha-\frac{n+1}{2}\bigg)\leq \f12+\frac{n}{2\alpha}-2\alpha\leq 0.
$$
From this, we can see that 
$$
	\frac{1}{|t|^{\f32}} 	(R+1)\lambda_0^{1-\alpha-(k+1)(2\alpha-\frac{n+1}{2})}\prod_{j=1}^{k+1} \frac{1}{ r_j^{\frac{n-1}{2} }}
	\les\frac{1}{|t|^{\frac{n}{2\alpha}}}\frac{1}{R^{{\f32-\frac{n}{2\alpha}}}}	(R+1) \prod_{j=1}^{k+1} \frac{1}{ r_j^{\frac{n-1}{2} }}
$$
Let $r_{j_0}=\max(r_j)$ and note that $R\approx r_{j_0}$ and $0\leq \frac{3}{2}-\frac{n}{2\alpha}<\f12$, so that
$$
	\frac{R+1}{R^{{\f32-\frac{n}{2\alpha}}}}	 \frac{1}{r_{j_0}^{\frac{n-1}{2}}}\les \frac{1}{r_{j_0}^{\frac{n-3}{2}}}+\frac{1}{r_{j_0}^{\frac{n}{2}-}}.
$$
The spatial integrals converge when $|V(x)|\les \la x\ra^{-\frac{n+1}{2}-}$.

When $\frac{n}{2\alpha}>\frac{3}{2}$,  using $R\gtrsim |t|$  we have
$$
	\frac{1}{|t|^{\f32}} \les \frac{1}{|t|^{\frac{n}{2\alpha}}} R^{\frac{n}{2\alpha}-\f32}.
$$
Furthermore, since $\alpha \geq \frac{n+1}{4}\geq 1$, we have 
$$
	1-\alpha-(k+1)(2\alpha-\frac{n+1}{2})\leq 0, 
$$
so 
we may bound by (with $r_{j_0}=\max(r_j)\approx R$)
\begin{multline*}
	\frac{1}{|t|^{\f32}} \int_{\R^{2k}}	(R+1)\lambda_0^{1-\alpha-(k+1)(2\alpha-\frac{n+1}{2})}\prod_{j=1}^{k+1} \frac{1}{ r_j^{\frac{n-1}{2} }}\, d\lambda \prod_{i=1}^k |V(x_k)|\, d\vec x\\
	\les \frac{1}{|t|^{\frac{n}{2\alpha}}}\int_{\R^{2k}} (R^{\frac{n}{2\alpha}-\frac12}+R^{\frac{n}{2\alpha}-\frac32}) \prod_{j=1}^{k+1} \frac{1}{ r_j^{\frac{n-1}{2} }}\, d\lambda \prod_{i=1}^k |V(x_k)|\, d\vec x\\
	\les \frac{1}{|t|^{\frac{n}{2\alpha}}}\int_{\R^{2k}}\bigg(
	\frac{1}{r_{j_0}^{\frac{n}{2}-\frac{n}{2\alpha}}}+\frac{1}{r_{j_0}^{\frac{n+2}{2}-\frac{n}{2\alpha}}}\bigg)
	\prod_{j=1, j\neq j_0}^{k+1} \frac{1}{ r_j^{\frac{n-1}{2} }}\, d\lambda \prod_{i=1}^k |V(x_k)|\, d\vec x,
\end{multline*}
noting that $\frac{n}{2}-\frac{n}{2\alpha}\geq 0$ and $\frac{n+2}{2}-\frac{n}{2\alpha}<\frac{n}2$,
the spatial integrals are controlled as before, provided $|V(x)|\les \la x\ra^{-\beta}$ for some $\beta>\frac{n+1}{2}$.

\end{proof}

The following lemma takes care of the contribution of tail of the Born series and finishes the proof of the high energy portion of Theorem~\ref{thm:main n}.

	\begin{lemma}\label{lem:tail hi hi d}
	
	Fix $\frac{n+1}4 \leq \alpha<\frac{n}2$ and $K$ sufficiently large.  If $H$ has no embedded eigenvalues and $|V(x)|\les \la x\ra^{-\beta}$ for some $\beta>\frac{n}{2\alpha}+\f52$,  then
	$$
	\sup_{L\geq 1}\sup_{x,y\in\R^n}\bigg|\int_0^\infty e^{it\lambda^{2\alpha}} \lambda^{2\alpha-1}\widetilde \chi(\lambda) \chi(\lambda/L) [(\mR_0V)^K\mR_V(V\mR_0)^K](\lambda^{2\alpha})(x,y)\, d\lambda \bigg| \les \frac{1}{|t|^{\frac{n}{2\alpha}}}.
	$$
	
\end{lemma}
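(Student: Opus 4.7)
The proof will follow the template of Lemma~\ref{lem:tail hi} (the two-dimensional analogue), adapted to the stronger target rate $|t|^{-n/(2\alpha)}$ in dimensions $n \geq 3$. I will extract the anticipated phases from the outermost free resolvents and set
$$
a_{x,y}(\lambda) := \lambda^{2\alpha-1}\widetilde \chi(\lambda)\chi(\lambda/L)e^{-i\lambda|x|}e^{-i\lambda|y|}[(\mR_0V)^K\mR_V(V\mR_0)^K](\lambda^{2\alpha})(x,y),
$$
reducing the task (the other sign combinations being handled analogously) to estimating $\int_1^\infty e^{it\lambda^{2\alpha}+i\lambda(|x|+|y|)}a_{x,y}(\lambda)\,d\lambda$. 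The crux is to prove the pointwise bounds
$$
|\partial_\lambda^j a_{x,y}(\lambda)| \les \lambda^{-M}(\la x\ra \la y\ra)^{-\frac{n-1}{2}}, \qquad j = 0, 1, 2,
$$
with $M$ arbitrarily large by taking $K$ large. For $j=0$, I apply the limiting absorption principle (Proposition~\ref{prop:LAP}) to $\mR_V$ and the $2K-2$ sandwiched free resolvents, each contributing $\lambda^{1-2\alpha}$ between weighted $L^2$ spaces, and estimate the two outermost factors $\mR_0(x,\cdot)V(\cdot)\la\cdot\ra^{1/2+}$ in $L^2$ using the pointwise bound $|\mR_0(\lambda^{2\alpha})(x,x_1)| \les \lambda^{\frac{n+1}{2}-2\alpha}|x-x_1|^{-\frac{n-1}{2}}$, a consequence of Proposition~\ref{prop:rep} and $\frac{n+1}{2}-2\alpha \leq 0$. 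A direct weighted $L^2$ computation yields $\lambda^{\frac{n+1}{2}-2\alpha}\la x\ra^{-\frac{n-1}{2}}$ decay provided $\beta > \frac{n+1}{2}$. Collecting $\lambda$-powers produces $|a_{x,y}(\lambda)| \les \lambda^{n-2\alpha-(2K-1)(2\alpha-1)}(\la x\ra \la y\ra)^{-\frac{n-1}{2}}$, which is $\lambda^{-M}$ for $K$ large. For $j \geq 1$, each $\partial_\lambda$ hitting a residual phase $e^{-i\lambda(|x|-|x-z_1|)}$ introduces a factor bounded by $||x|-|x-z_1|| \leq |z_1|$, absorbed by the decay of $V$, while derivatives on $\mR_V$ invoke the derivative LAP of Proposition~\ref{prop:LAP}, requiring $\beta > 1 + 2j$; the compound weight requirement imposes $\beta > \frac{n}{2\alpha}+\frac{5}{2}$.

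Next I split the $\lambda$-integral based on the critical point $\lambda_0 := \left(\frac{|x|+|y|}{2\alpha|t|}\right)^{1/(2\alpha-1)}$ of the phase $\phi(\lambda) := \lambda^{2\alpha}+\lambda(|x|+|y|)/t$. When $t > 0$ or when $t < 0$ with $\lambda \not\sim \lambda_0$, we have $|\phi'(\lambda)| \gtrsim \lambda^{2\alpha-1}$; integrating by parts twice yields
$$
\bigg|\int_1^\infty e^{it\phi(\lambda)} a_{x,y}(\lambda)\, d\lambda\bigg| \les |t|^{-2} (\la x\ra \la y\ra)^{-\frac{n-1}{2}} \int_1^\infty \lambda^{-M'}\,d\lambda \les |t|^{-2},
$$
and since $\alpha \geq \frac{n+1}{4} > \frac{n}{4}$ gives $\frac{n}{2\alpha} < 2$ and $|t| \gtrsim 1$, the bound $|t|^{-2}$ dominates the desired $|t|^{-n/(2\alpha)}$. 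When $t < 0$ and $\lambda \sim \lambda_0$, the support of $\widetilde\chi$ forces $\lambda_0 \gtrsim 1$ and therefore $|x|+|y| \gtrsim |t|$; Van der Corput with $|\phi''(\lambda)| \approx |t|\lambda_0^{2\alpha-2}$ yields
$$
|t|^{-1/2}\lambda_0^{1-\alpha}\int_{\lambda \sim \lambda_0}|\partial_\lambda a_{x,y}|\,d\lambda \les |t|^{-1/2}\lambda_0^{-\alpha}(\la x\ra \la y\ra)^{-\frac{n-1}{2}} \les |t|^{-1/2}(\la x\ra \la y\ra)^{-\frac{n-1}{2}}.
$$
Combined with $\la x\ra \la y\ra \gtrsim |x|+|y| \gtrsim |t|$, this is bounded by $|t|^{-1/2}|t|^{-(n-1)/2} = |t|^{-n/2} \leq |t|^{-n/(2\alpha)}$, the final inequality holding because $\alpha \geq 1$ in dimensions $n \geq 3$.

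The main obstacle will be the careful derivation of the two-derivative pointwise estimate for $a_{x,y}$, since each $\partial_\lambda$ is distributed via the product rule across the $2K+1$ resolvents, $2K$ potentials, cutoffs, and extracted phases. Derivatives landing on $\mR_V$ or on an intermediate $\mR_0$ must be absorbed through the derivative LAP in Proposition~\ref{prop:LAP}, whose weighted bounds cost $\la\cdot\ra^{j+1/2+}$ on each side and therefore eat into $V$'s polynomial decay; derivatives landing on a residual phase similarly require paying a factor of $\la z_i\ra$. The hypothesis $\beta > \frac{n}{2\alpha}+\frac{5}{2}$ is precisely what is needed to preserve both the $(\la x\ra \la y\ra)^{-(n-1)/2}$ spatial decay in the outer weighted $L^2$ estimates and the uniform-in-$\lambda$ behavior required by the integration by parts and Van der Corput steps above.
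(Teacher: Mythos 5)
Your overall strategy is exactly the paper's: extract the phases $e^{-i\lambda|x|}e^{-i\lambda|y|}$, prove pointwise bounds on $a_{x,y}$ and its first two derivatives by combining the kernel bounds of Proposition~\ref{prop:rep} on the outermost resolvents with iterated applications of the limiting absorption principle of Proposition~\ref{prop:LAP} on the inner factors, and then split into the non-stationary regime (two integrations by parts, giving $|t|^{-2}\les |t|^{-n/(2\alpha)}$) and the stationary regime $\lambda\sim\lambda_0\gtrsim 1$ (Van der Corput plus the observation $|t|\les |x|+|y|$).

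The gap is in the claimed intermediate estimate $|\partial_\lambda^j a_{x,y}(\lambda)|\les \lambda^{-M}(\la x\ra\la y\ra)^{-\frac{n-1}{2}}$ for $j=1,2$: this spatial decay is not available under the hypothesis $\beta>\frac{n}{2\alpha}+\frac52$. To allow both derivatives to fall on the resolvent adjacent to the outer one, the derivative LAP forces you to load a weight $\la\cdot\ra^{\frac52+}$ onto the outer factor, so the relevant quantity is $\|\la\cdot\ra^{\frac52-\beta+}|x-\cdot|^{\frac{1-n}{2}}\|_{L^2}$, and with $\beta$ only slightly larger than $\frac{n}{2\alpha}+\frac52$ the surviving decay is $\la x\ra^{\frac12-\frac{n}{2\alpha}}$, not $\la x\ra^{-\frac{n-1}{2}}$; obtaining $(n-1)/2$ decay would require roughly $\beta>\frac{n+5}{2}$, which is strictly stronger whenever $\alpha>1$ (even your $j=0$ bound needs $\beta>\frac{n+1}{2}$, which the lemma's hypothesis does not imply for large $n$ with $\alpha$ near $\frac n2$). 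This is the point where the exponent $\frac12-\frac{n}{2\alpha}$ — and hence the final rate $|t|^{-n/(2\alpha)}$ rather than your $|t|^{-n/2}$ — actually comes from. The flaw is fixable without changing your structure: replace the claimed bound by $|\partial_\lambda^j a_{x,y}(\lambda)|\les \lambda^{-2}\la x\ra^{\frac12-\frac{n}{2\alpha}}\la y\ra^{\frac12-\frac{n}{2\alpha}}$ (this is what the paper proves), and in the stationary regime use $|t|\les|x|+|y|$ together with $\frac12-\frac{n}{2\alpha}<0$ to get $|t|^{-\frac12}(\la x\ra+\la y\ra)^{\frac12-\frac{n}{2\alpha}}\les|t|^{-\frac{n}{2\alpha}}$; the non-stationary and small-time bounds go through unchanged. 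You should also state explicitly the uniform bound handling $|t|\les 1$, which your $|t|\gtrsim 1$ caveat leaves implicit.
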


\begin{proof}
	
	Let
	$$
	a_{x,y}(\lambda)=\lambda^{2\alpha-1}\widetilde \chi(\lambda) \chi(\lambda/L) e^{-i\lambda |x|}e^{-i\lambda |y|} [(\mR_0V)^K\mR_V(V\mR_0)^K](\lambda^{2\alpha})(x,y).
	$$
	We prove below that for sufficiently large $K$,
	\be\label{eqn:a bds nbig}
	|\partial_\lambda^j a_{x,y}(\lambda)|\les \lambda^{-2} \la x\ra^{\f{1}2-\frac{n}{2\alpha}} \la y\ra^{\f{1}2-\frac{n}{2\alpha}}, \qquad j=0,1,2.
	\ee
	Using these bounds, it suffices to consider
	$$
	\int_1^\infty e^{it\lambda^{2\alpha}+i\lambda(|x|+|y|)} a_{x,y}(\lambda)\, d\lambda.
	$$
	It is clear that this integral is bounded by one uniformly in $x,y$.  For the time decay, as in the proof of Lemma~\ref{lem:Born hi}, we consider the cases of $\lambda \sim \lambda_0=(-\frac{|x|+|y|}{2\alpha t})^{\frac1{2\alpha-1}}$ and $\lambda \not \sim \lambda_0$.
	
	In the case when $\lambda \not \sim \lambda_0$, two integrations by parts and \eqref{eqn:a bds nbig} results in
	$$
	\bigg|\int_1^\infty e^{it\lambda^{2\alpha}+i\lambda(|x|+|y|)} a_{x,y}(\lambda)\, d\lambda\bigg|\les \frac{1}{|t|^2} \int_1^\infty \lambda^{-2}\, d\lambda \les \frac{1}{|t|^2}.
	$$
	When $\lambda \sim \lambda_0\gtrsim 1$, by Van der Corput and \eqref{eqn:a bds nbig}, we have
	\begin{multline*}
	\bigg|\int_1^\infty 	e^{it\lambda^{2\alpha}+i\lambda(|x|+|y|)} a_{x,y}(\lambda)\, d\lambda\bigg|\les |t|^{-\f12} \la x\ra^{\f{1}2-\frac{n}{2\alpha}}\la y\ra^{\f{1}2-\frac{n}{2\alpha}} |\lambda_0|^{1-\alpha} \int_{\lambda \sim \lambda_0} \lambda^{-2}\, d\lambda\\
	\les \frac{\la x\ra^{\f{1}2-\frac{n}{2\alpha}}\la y\ra^{\f{1}2-\frac{n}{2\alpha}}}{|t|^{\f12}}\les |t|^{-\frac{n}{2\alpha}}.
	\end{multline*}
	Where we used that $\lambda_0\gtrsim 1$, which in particular implies that $|t|\les |x|+|y|$.
	
	To complete the proof, we must establish the bounds in \eqref{eqn:a bds nbig}.	
	Notice that
	\begin{multline}\label{eqn:axy ugly}
		\partial_\lambda^j a_{x,y}(\lambda)=\sum
		\partial_\lambda^{j_1}[\lambda^{2\alpha-1}\widetilde \chi(\lambda) \chi(\lambda/L)]\partial_\lambda^{j_2}[e^{-i\lambda|x|}\mR_0(\lambda^{2\alpha})(x,\cdot)]V\\
		\times\partial_\lambda^{j_3}[(\mR_0V)^{K-1}\mR_V(V\mR_0)^{K-1}](\lambda^{2\alpha})
		V\partial_\lambda^{j_4}[e^{-i\lambda|y|}\mR_0(\lambda^{2\alpha})(\cdot, y)],
	\end{multline}
	where the sum is taken of $j_i\geq 0$ with $\sum j_i=j$.
	
	We note that, since $\lambda\gtrsim 1$, by Proposition~\ref{prop:rep} we have
	$$
	|\partial_\lambda^j e^{-i\lambda |x|}\mR_0^+(\lambda^{2\alpha})(x,x_1)V(x_1)|\les \frac{\la x_1\ra^{j-\beta} }{|x-x_1|^{\frac{n-1}{2}}}
	$$
	We note that $\frac{n-1}{2}\geq \frac{n}{2\alpha}-\frac12$.
	So that,   we have 
	$$
	\|\partial_\lambda^j e^{-i\lambda |x|}\mR_0^+(\lambda^{2\alpha})(x,\cdot)V(\cdot)\la \cdot\ra^{\f52-j+}\|_2\les
	\| \la \cdot\ra ^{\frac{5}{2}-\beta+}|x-\cdot|^{\frac{1-n}{2}}\|_2
	\les \la x\ra^{\frac12-\frac{n}{2\alpha}}
	$$
	provided that $|V(x)|\les \la x\ra^{-\beta}$ for some
	$\beta>\frac{n}{2\alpha}+\frac{5}{2}$.  
	The remainder of the proof mimics that of Lemma~\ref{lem:tail hi}.  By iterating the limiting absorption principle in Proposition~\ref{prop:LAP} sufficiently, the bounds follow since $1-2\alpha<0$.  
	
	The decay on $V$ is necessitated by when all derivatives act on a single resolvent.  If this resolvent is an inner resolvent, to apply Proposition~\ref{prop:LAP} we need multiplication by $V$ to map $L^{2,-\f52-}\to L^{2,\f12+}$, which necessitates $\beta>3$.  If all derivatives in \eqref{eqn:axy ugly} act on the first and second (respectively last and second to last) resolvents, we need to bound
	$$
	\|\partial_\lambda^{j_1} e^{-i\lambda |x|}\mR_0^+(\lambda^{2\alpha})(x,\cdot)V(\cdot)\la \cdot\ra^{\f52-j_1+}\|_2 \|\partial_\lambda^{2-j_1} \mR_0(\lambda^{2\alpha})\|_{L^{2,\f52-j_1+}\to L^{2,j_1-\f52-}},
	$$
	This requires $\beta>\frac{n}{2\alpha}+\frac52$ as described above.

\end{proof}

	\subsection{Low energies}
	
	We now consider the low energy, when $0<\lambda<\lambda_0$ for a sufficiently small constant $\lambda_0\ll1$.
	We utilize the symmetric resolvent identity,
	\eqref{eqn:sym res}.  The low energy claim of Theorem~\ref{thm:main n} follows from 
	\begin{prop}\label{prop:low tail2}
		
		If zero is a regular point of $H$ and $|V(x)|\les \la x\ra^{-\beta}$ for some $\beta>n+4$, then
		$$
		\sup_{x,y\in\R^n}\bigg|\int_0^\infty e^{it\lambda^{2\alpha}} \lambda^{2\alpha-1} \chi(\lambda)  [\mR_V^+-\mR_V^-](\lambda^{2\alpha})(x,y)\, d\lambda \bigg| \les |t|^{-\frac{n}{2\alpha}}.
		$$
		
	\end{prop}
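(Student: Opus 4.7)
The plan follows the three-stage argument of Proposition~\ref{prop:low tail1}, adapted to dimension $n\geq 3$.

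First, I would establish the $n$-dimensional analog of Lemma~\ref{lem:Minv 2}. Using the expansion in Proposition~\ref{prop:R0 exp}, I write $M^\pm(\lambda)=T_0+\mathcal E(\lambda)$ for $0<\lambda<\lambda_0$, where $T_0=U+v\mR_0^+(0)v$ and $\|\mathcal E(\lambda)\|_{L^2\to L^2}\to 0$ as $\lambda\to 0^+$.  The operator $v\mR_0^+(0)v$ has kernel dominated pointwise by $\la x\ra^{-\beta/2}|x-y|^{2\alpha-n}\la y\ra^{-\beta/2}$, which is Hilbert--Schmidt for $\beta>n$ and $\alpha>n/4$.  Regularity of zero then yields $T_0^{-1}$ absolutely bounded, and a Neumann series provides $M_\pm^{-1}(\lambda)$ absolutely bounded on $L^2$ for small $\lambda$.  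Differentiating the resolvent identity for $M_\pm^{-1}$ and invoking the derivative bounds of Proposition~\ref{prop:R0 exp} produces $\|\lambda^{j-}\partial_\lambda^jM_\pm^{-1}\|_{L^2\to L^2}\les 1$ for $j=1,2$, provided $\beta>n+4$.  The factorization
$$
[M_+^{-1}-M_-^{-1}](\lambda)=-\lambda^{n-2\alpha}M_+^{-1}(\lambda)\,v\bigl[e^{i\lambda r}F_+(\lambda r)+e^{-i\lambda r}F_-(\lambda r)\bigr]v\,M_-^{-1}(\lambda)
$$
then gives the difference as $O(\lambda^{n-2\alpha})$ in $L^2$-operator norm, with parallel derivative control.

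Second, I telescope the symmetric resolvent identity \eqref{eqn:sym res} into
$$
\mR_V^+-\mR_V^-=(\mR_0^+-\mR_0^-)-(\mR_0^+-\mR_0^-)vM_+^{-1}v\mR_0^+-\mR_0^-v[M_+^{-1}-M_-^{-1}]v\mR_0^+-\mR_0^-vM_-^{-1}v(\mR_0^+-\mR_0^-).
$$
The free term contributes the low-frequency portion of $e^{it(-\Delta)^\alpha}$ and satisfies $\les|t|^{-n/(2\alpha)}$ by the standard scaling and stationary-phase argument (using $\alpha\geq 1$ in our range).  In each cross term a factor $\lambda^{n-2\alpha}$ is explicit from either $\mR_0^+-\mR_0^-$ or $[M_+^{-1}-M_-^{-1}]$.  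Using Proposition~\ref{prop:rep} to expand the remaining resolvents and pulling out the leading phases $e^{-i\lambda|x|}$, $e^{\mp i\lambda|y|}$, each cross term reduces to
$$
\int_0^{\lambda_0}e^{it\lambda^{2\alpha}+i\lambda(|x|\mp|y|)}\,\lambda^{n-1}\chi(\lambda)\,a_{x,y}(\lambda)\,d\lambda,
$$
where $a_{x,y}(\lambda)$ is a spatial pairing of $F$, $F_\pm$, and $v$ against an $L^2$-bounded operator $\Gamma(\lambda)\in\{M_+^{-1}(\lambda),\,\lambda^{-(n-2\alpha)}[M_+^{-1}-M_-^{-1}](\lambda)\}$.

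Third, I estimate $a_{x,y}$ and carry out the oscillatory integral.  Cauchy--Schwarz combined with the uniform bounds $|F|,|F_\pm|\les 1$ (both valid in our range of $\alpha$) and the absolute boundedness of $\Gamma$ gives $|a_{x,y}(\lambda)|\les 1$.  Using $|F_\pm(\lambda r)|\les(\lambda r)^{-(n-1)/2}$ for $\lambda r\gtrsim 1$ from \eqref{Fbounds}, and the fact that $r_2\approx|y|$ when $z_2$ lies in the support of $v$ and $|y|$ is large, yields the refined bound $|a_{x,y}(\lambda)|\les\la\lambda(\la x\ra+\la y\ra)\ra^{-(n-1)/2}$.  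A parallel analysis using \eqref{Fbounds2} and the derivative bound on $\Gamma$ controls $|\partial_\lambda a_{x,y}|$; the condition $\beta>n+4$ is dictated by the weight $\la z\ra$ produced when the $\lambda$-derivative lands on a phase $e^{i\lambda(r_j-|x|)}$.  For the oscillatory integral, I distinguish $|t|\les 1$ (where the trivial bound suffices) from $|t|\gg 1$.  In the large-$|t|$ regime, the critical point $\lambda_*=((|x|\mp|y|)/(2\alpha|t|))^{1/(2\alpha-1)}$ is either outside $[0,\lambda_0]$ (meaning $|x|+|y|\gtrsim|t|$), in which case two integrations by parts with $|\phi'(\lambda)|\gtrsim|x|+|y|$ give $\les(|x|+|y|)^{-2}\les|t|^{-2}\les|t|^{-n/(2\alpha)}$ (using $n/(2\alpha)<2$ in our range), or inside, in which case Van der Corput near $\lambda_*$ produces $|t|^{-1/2}\lambda_*^{1-\alpha}$ times the total variation of $\lambda^{n-1}a_{x,y}$, and a case analysis on whether $\lambda_*(|x|+|y|)\gtrless 1$ (using the refined bound on $a_{x,y}$) converts this to the target $|t|^{-n/(2\alpha)}$, while IBP controls the regions away from $\lambda_*$.

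The main obstacle will be the final stationary-phase accounting: one must balance the $|t|^{-1/2}$ from Van der Corput against $\lambda_*^{1-\alpha}$ and the $(\lambda_*(|x|+|y|))^{-(n-1)/2}$ decay of $a_{x,y}$ precisely to recover the exponent $n/(2\alpha)$.  This delicate bookkeeping, together with controlling the derivatives that fall on the phases $e^{i\lambda(r_j-|x|)}$, is what drives the decay hypothesis $\beta>n+4$ on the potential.
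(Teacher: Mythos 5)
Your plan is, in substance, the paper's own route: the $n$-dimensional analogue of Lemma~\ref{lem:Minv 2} is Lemma~\ref{lem:Minv n}; the symmetric resolvent identity reduces matters to oscillatory integrals $\int_0^{\lambda_0}e^{it\lambda^{2\alpha}+i\lambda(|x|\mp|y|)}\lambda^{n-1}\chi(\lambda)a_{x,y}(\lambda)\,d\lambda$ with $\Gamma\in\{M_+^{-1},\ \lambda^{2\alpha-n}[M_+^{-1}-M_-^{-1}]\}$; and the conclusion comes from integration by parts off the critical point and Van der Corput near it, using a refined spatial decay bound on $a_{x,y}$. However, two of your stated estimates would fail as written and must be replaced by the weaker (but sufficient) versions the paper actually proves.

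First, the refined bound $|a_{x,y}(\lambda)|\les\la\lambda(\la x\ra+\la y\ra)\ra^{-(n-1)/2}$ is not available from $\beta>n+4$ alone once $n\geq6$: when $|y|\gg1$, on the region $|z_2|\gtrsim|y|$ (where $|z_2-y|$ may be $O(1)$) the only source of decay in $\la y\ra$ is $v(z_2)$ itself, which yields about $\la y\ra^{-(\beta-n)/2-}\approx\la y\ra^{-2-}$; recovering the full rate $(n-1)/2$ would require roughly $\beta\geq 2n-1$. The paper proves and uses the exponent $\f{n}{2\alpha}-\f12\leq\f32$ (see \eqref{eqn:ax low big n}), which is exactly what the stationary-phase bookkeeping needs, since $|\,|x|\mp|y|\,|^{\f{n}{2\alpha}-\f12}(\la x\ra+\la y\ra)^{\f12-\f{n}{2\alpha}}\les1$; your final accounting should be run with this smaller exponent. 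Second, in the non-stationary regime the claim $|\phi'(\lambda)|\gtrsim|x|+|y|$ is incorrect: the phase involves $R=|x|\mp|y|$, which can vanish while $|x|+|y|$ is huge, and near $\lambda=0$ the only usable lower bound is $|\phi'(\lambda)|\gtrsim\lambda^{2\alpha-1}$ (after dividing by $t$); two integrations by parts then produce $|t|^{-2}\int\lambda^{n-1-4\alpha}\,d\lambda$, which diverges at $\lambda=0$ because $n-1-4\alpha<-1$. The missing step, present in the paper, is to first discard $\{0<\lambda<|t|^{-1/(2\alpha)}\}$ by the trivial bound $\int_0^{|t|^{-1/(2\alpha)}}\lambda^{n-1}\,d\lambda\les|t|^{-n/(2\alpha)}$ and only then integrate by parts on the remainder, getting $|t|^{-2}\int_{|t|^{-1/(2\alpha)}}^{1}\lambda^{n-1-4\alpha}\,d\lambda\les|t|^{-n/(2\alpha)}$; one does not obtain $|t|^{-2}$ itself. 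Note also that this double integration by parts (and the Van der Corput step after one integration by parts) uses second $\lambda$-derivatives of $a_{x,y}$, so the $j=2$ bounds on $M_\pm^{-1}$ and on $[M_+^{-1}-M_-^{-1}]$ from your first stage are genuinely needed, and the low-frequency free term should be treated by the same oscillatory-integral analysis rather than by exact scaling, since the cutoff $\chi(\lambda)$ breaks scale invariance.
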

	
	We first establish bounds on $M^{-1}_{\pm}(\lambda)$ and its derivatives.
	
	\begin{lemma}\label{lem:Minv n}
		
		For sufficiently small $\lambda_0$, if $|V(x)|\les \la x\ra^{-\beta}$ for some $\beta>n$ and zero is regular, then the operators $M^\pm(\lambda)$ are invertible on $L^2$.  Furthermore,
		$$
		\bigg\|\sup_{0<\lambda<\lambda_0} |M_\pm^{-1}(\lambda)|+\lambda |\partial_\lambda M_\pm^{-1}(\lambda)|+\lambda^2 |\partial_\lambda^2 M_\pm^{-1}(\lambda)|\,\bigg\|_{L^2\to L^2}<\infty.
		$$
		In addition, if $\beta>n+4$,
		$$
		\bigg\|\sup_{0<\lambda<\lambda_0} \sum_{k=0}^2\lambda^{2\alpha-n+k} |\partial_\lambda^k [M_+^{-1}-M_-^{-1}](\lambda)|\,\bigg\|_{L^2\to L^2}<\infty.
		$$
		
	\end{lemma}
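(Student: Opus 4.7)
The plan is to mirror the proof of Lemma~\ref{lem:Minv 2}, substituting the $n$-dimensional expansions of Proposition~\ref{prop:R0 exp} and Proposition~\ref{prop:rep}. First, I would split
\[
M^\pm(\lambda) = T_0 + v\mathcal{E}(\lambda) v, \qquad T_0 = U + v\mR_0^+(0)v,
\]
where $\mathcal{E}$ is the error term of Proposition~\ref{prop:R0 exp}. Since zero is regular, $T_0$ is invertible on $L^2$; the kernel of $v\mR_0^+(0)v$ is pointwise bounded by $\la x\ra^{-\beta/2}|x-y|^{2\alpha-n}\la y\ra^{-\beta/2}$, so it is Hilbert--Schmidt when $\beta>n$, and the resolvent identity $T_0^{-1} = U - U(v\mR_0^+(0)v)T_0^{-1}$ shows $T_0^{-1}$ is absolutely bounded. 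A case analysis on $4\alpha$ versus $n$ in Proposition~\ref{prop:R0 exp} shows $\|v\mathcal{E}(\lambda)v\|_{L^2\to L^2}\to 0$ as $\lambda\to 0$; the most delicate piece is the fractional kernel $\lambda^{2\alpha}|x-y|^{4\alpha-n}$ when $n>4\alpha$, handled via weighted fractional integration. A Neumann series then inverts $M^\pm(\lambda)$ uniformly for $\lambda<\lambda_0$ small.

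For the derivatives of $M_\pm^{-1}$ I would differentiate $M_\pm M_\pm^{-1}=I$ to obtain
\[
\partial_\lambda M_\pm^{-1} = -M_\pm^{-1}(v\,\partial_\lambda\mR_0^\pm\, v)M_\pm^{-1},
\]
\[
\partial_\lambda^2 M_\pm^{-1} = 2M_\pm^{-1}(v\,\partial_\lambda\mR_0^\pm\, v)M_\pm^{-1}(v\,\partial_\lambda\mR_0^\pm\, v)M_\pm^{-1} - M_\pm^{-1}(v\,\partial_\lambda^2\mR_0^\pm\, v)M_\pm^{-1}.
\]
The first part of the lemma therefore reduces to operator-norm bounds $\lambda^j\|v\,\partial_\lambda^j\mR_0^\pm\, v\|_{L^2\to L^2}\les 1$ for $j=1,2$. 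These follow by differentiating the representation $\mR_0^\pm(\lambda^{2\alpha})(x,y) = e^{\pm i\lambda r}F(\lambda r)/r^{n-2\alpha}$ and distributing derivatives between the phase (each yielding a factor $r$) and the amplitude (each yielding a factor $\lambda^{-1}$, with an extra $(\lambda r)^{\min(1,n-2\alpha,2\alpha-)}$ for small $\lambda r$ by \eqref{Fbounds2}). The improved estimate is critical in the regime $\lambda r\les 1$ to cancel the naive $\lambda^{-j}$ divergence from amplitude differentiation, leaving a weighted fractional-integral kernel of the form $v(x)r^{2\alpha-n}v(y)$ controlled via Lemma~\ref{lem:spec small a}.

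For the difference, the second resolvent identity combined with \eqref{Frep} yields
\[
[M_+^{-1} - M_-^{-1}](\lambda) = -\lambda^{n-2\alpha}\, M_+^{-1}(\lambda)\, v\, \big[e^{i\lambda r}F_+(\lambda r) + e^{-i\lambda r}F_-(\lambda r)\big]\, v\, M_-^{-1}(\lambda).
\]
Since $|F_\pm|\les 1$ and $v\in L^2$, the $k=0$ estimate is immediate. For $k=1,2$, I would apply the Leibniz rule. Derivatives landing on the explicit prefactor $\lambda^{n-2\alpha}$ or on $F_\pm$ each cost at most one factor of $\lambda^{-1}$, exactly matched by the $\lambda^{2\alpha-n+k}$ normalization; derivatives landing on $M_\pm^{-1}$ are absorbed by the first half of the lemma. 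Derivatives landing on the oscillatory phases $e^{\pm i\lambda r}$ produce factors of $r\les \la x\ra+\la y\ra$, and since up to $k=2$ such factors can accumulate we obtain a kernel of size $v(x)r^2 v(y)$. This is Hilbert--Schmidt precisely when $\iint v^2(x)r^4 v^2(y)\,dx\,dy<\infty$, which requires $\beta>n+4$.

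The main obstacle is bookkeeping: Proposition~\ref{prop:R0 exp} splits into three subcases depending on the sign of $n-4\alpha$, Proposition~\ref{prop:rep} has distinct behaviors for $\lambda r\les 1$ and $\lambda r\gtrsim 1$, and the Leibniz expansion of $\partial_\lambda^2$ generates several term types. In each situation one must invoke the improved bound \eqref{Fbounds2} rather than the default one to prevent spurious negative powers of $\lambda$ from appearing, and one must balance the $r$-factors coming from phase differentiation against the available weights $v^2$, which is what produces the precise thresholds $\beta>n$ and $\beta>n+4$.
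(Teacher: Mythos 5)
Your proposal is correct and follows essentially the same route as the paper: the same splitting $M^\pm(\lambda)=T_0+v\mathcal{E}(\lambda)v$ with a Hilbert--Schmidt plus Neumann series argument, the same resolvent-identity formulas for $\partial_\lambda^j M_\pm^{-1}$ reduced to weighted kernel bounds on $v\,\partial_\lambda^j\mR_0^\pm\, v$ via Proposition~\ref{prop:rep}, and the same use of \eqref{Frep} together with the $r$-factors from the phases to produce the $\beta>n+4$ requirement for the difference. The only (harmless) discrepancy is that you describe the improved bound \eqref{Fbounds2} as \emph{critical} for the first part, whereas the $\lambda^j$ normalization already cancels the $\lambda^{-j}$ loss from differentiating $F$, so the basic bound \eqref{Fbounds} suffices there, exactly as in the paper's proof.
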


	\begin{proof}
		
		From Proposition~\ref{prop:R0 exp}, we write
		$$
		M^+(\lambda)=U+v\mR_0^+(0)v+\mathcal E(\lambda), \qquad \mathcal E(\lambda)=O(\lambda^{n-2\alpha} |v|(x)|v|(y)).
		$$
		If $|V(x)|\les \la x\ra^{-\beta}$ for $\beta>n$, we have
		$$
		\sup_{0<\lambda<\lambda_0} |\mathcal E(\lambda)|\les \lambda_0^{n-2\alpha}\la x\ra^{-\frac{n}2-}\la y\ra^{-\frac{n}2-},
		$$
		which is bounded on $L^2$.
		$T_0^{-1}$ is absolutely bounded by the same argument in Lemma~\ref{lem:Minv 2}.
		
		So, the claim follows by a Neumann series expansion  for sufficiently small $\lambda_0$.  For the derivative, we use the resolvent identity to write
		$$
		\partial_\lambda M_+^{-1}(\lambda)= M_+^{-1}(\lambda)v\partial_\lambda \mR_0^+(\lambda^{2\alpha})v M_+^{-1}(\lambda).
		$$
		We note that, by Proposition~\ref{prop:R0 exp} we have (for $k\leq 2$)
		\begin{multline*}
			\lambda^{k}|\partial_\lambda^k \mR_0^+(\lambda^{2\alpha})(x,y)|\les (1+\lambda|x-y|)^k\frac{\la \lambda |x-y|\ra^{\f{n+1}2-2\alpha}}{|x-y|^{n-2\alpha}}
			\les \frac{1}{|x-y|^{n-2\alpha}}+|x-y|^{\f{5-n}2}.
		\end{multline*}
		The last bound is seen by considering the cases of $\lambda|x-y|<1$ and $\lambda |x-y|\geq 1$ separately.  Then, for $k\leq 2$
		$$
		\sup_{0<\lambda<\lambda_0} \lambda^{k}|\partial_\lambda^k[ v\mR_0^+(\lambda^{2\alpha})v](x,y)|
		$$
		is bounded on $L^2$ provided $\beta>5$.
		
		The second claim follows from the resolvent identity:
		\begin{multline*}
			[M_+^{-1}-M_{-}^{-1}](\lambda)=-M_{+}^{-1}(\lambda)v[\mR_0^+(\lambda^{2\alpha})-\mR_0^-(\lambda^{2\alpha})]vM_{-}^{-1}(\lambda)\\
			=-\lambda^{n-2\alpha}M_{+}^{-1}(\lambda)v[e^{i\lambda r}F_+(\lambda r)+e^{-i\lambda r} F_{-}(\lambda r)]vM_{-}^{-1}(\lambda)
		\end{multline*}
		The claim now follows as above from \eqref{Fbounds2} and the bounds on $M_{\pm}^{-1}$.
		
	\end{proof}
	
	We are now ready to proof Proposition~\ref{prop:low tail2} and hence Theorem~\ref{thm:main n}.
	
	\begin{proof}[Proof of Proposition~\ref{prop:low tail2}]

	As before, using the symmetric resolvent identity \eqref{eqn:sym res} it suffices to control the contribution of $\mR_0^-(\lambda^{2\alpha})v M_+^{-1}(\lambda) v[\mR_0^+(\lambda^{2\alpha})-\mR_0^-(\lambda^{2\alpha})]$ and $\mR_0^-(\lambda^{2\alpha})v [M_+^{-1}-M_-^{-1}](\lambda) v\mR_0^+(\lambda^{2\alpha})$ to the Stone's formula.    In both cases we consider an operator $\Gamma(\lambda)$ of the form where
	$$
	\widetilde \Gamma:=\sup_{0<\lambda<\lambda_0}( |\Gamma(\lambda)|+\lambda |\partial_\lambda\Gamma(\lambda)|+\lambda^{2} |\partial_\lambda^2\Gamma(\lambda)|)
	$$
	is bounded on $L^2$.  By Lemma~\ref{lem:Minv n} both $M_+^{-1}(\lambda)$ and $\lambda^{2\alpha-n }[M_+^{-1}-M_-^{-1}](\lambda)$ satisfy this bound.
	
	By Proposition~\ref{prop:R0 exp} and the definition of $\Gamma(\lambda)$ above, we need to control
	\be\label{eqn:low tail goal hi n}
	\int_0^1  e^{it\lambda^{2\alpha}+i\lambda (|x|\mp |y|)} \lambda^{n-1}\chi(\lambda)  a_{x,y}(\lambda) \, d\lambda,
	\ee
	where (with $r_1=|x-z_1|$ and $r_2=|z_2-y|$)
	\be
	a_{x,y}(\lambda)=\chi(\lambda)\int_{\R^{2n}}  e^{i\lambda (r_1- |x|\pm (r_2-|y|))} \frac{F(\lambda r_1)}{r_1^{n-2\alpha}} [v\Gamma(\lambda)v](z_1,z_2)
	\bigg(\frac{F(\lambda r_2)}{ r_2^{n-2\alpha}}+F_{\pm}(\lambda r_2) \bigg)\, dz_1 \, dz_2.
	\ee
	Note that, using $|F(\cdot)|,|F_\pm(\cdot)|\les 1$, we have
	$$
	|a_{x,y}(\lambda)|\les \bigg\| \frac{v(\cdot)}{|x-\cdot|^{n-2\alpha}}\bigg\|_{L^2} \| \widetilde \Gamma \|_{L^2\to L^2}\bigg\| v(\cdot)\bigg(1+\frac{1}{|\cdot-y|^{n-2\alpha}}\bigg)\bigg\|_{L^2}\les 1,
	$$
	uniformly in $x,y\in \R^n$.  This suffices for the case of $|t|\les 1$.  Further, by Proposition~\ref{prop:rep}, Lemma~\ref{lem:Minv n}, and the definition of $\widetilde\Gamma$ above, we have 
	\be\label{eqn:ax big n}
		\big|\partial_\lambda^j a_{x,y}(\lambda)\big|\les \lambda^{-j}, \qquad j=0,1,2.
	\ee	
	We now turn to the large time decay, when $|t|\gg 1$ we break up the $\lambda$ integral into two pieces.  When $0<\lambda <|t|^{-\f1{2\alpha}}$ we bound by
	$$
		\int_0^{|t|^{-\f1{2\alpha}}} \lambda^{n-1}\, d\lambda \les |t|^{-\f{n}{2\alpha}}.
	$$
	We now consider the remaining portion, first when $|\lambda (|x|\mp |y|)|\les 1$.  By \eqref{eqn:ax big n} and the assumption that $|\lambda (|x|\mp |y|)|\les 1$, we have
	$$
		\big|\partial_\lambda^j \big[e^{i\lambda (|x|\mp |y|)}a_{x,y}(\lambda)\big]\big|\les \lambda^{-j}, \qquad j=0,1,2.
	$$
	We integrate by parts against $e^{it\lambda^{2\alpha}}$ twice to bound by 
	\be\label{eqn:ibp hi d low}
		\frac{1}{|t|^2}\int_{|t|^{-\frac{1}{2\alpha}}}^1 \lambda^{n-1-4\alpha }\, d\lambda\les |t|^{-\frac{n}{2\alpha}},
	\ee
	where we used that $n-1-4\alpha<-1$. 
	
	Now we consider when $|\lambda(|x|\mp|y|)|\gg 1$, where the phase has a critical point at $\lambda_0=(\frac{-(|x|\mp |y|)}{2\alpha t})^{\frac{1}{2\alpha-1}}$. We first consider when $\lambda \not \sim \lambda_0$.  Here we integrate by parts twice using  that $|\partial_\lambda^j a_{x,y}(\lambda)|\les \lambda^{-j}$, we may bound by
	$$
		\frac{1}{|t|^2}\int_{|t|^{-\frac{1}{2\alpha}}}^1 \bigg| \partial_\lambda \bigg(\frac{1}{\phi'(\lambda)} \partial_\lambda\bigg( \frac{\lambda^{n-1}}{\phi'(\lambda)} a_{x,y}(\lambda)\bigg)\bigg) \bigg|\, d\lambda.
	$$
	Since $\lambda \not \sim \lambda_0$ we have that $|\phi'(\lambda)|\gtrsim \lambda^{2\alpha-1}$ and this is dominated by
	\eqref{eqn:ibp hi d low}.
	
	When $\lambda \sim \lambda_0\les 1$, we have $|t|\gtrsim |\,|x|\mp |y|\,|$.  We integrate by parts once against $e^{it\lambda^{2\alpha}}$ to bound by (and denoting $R:=|x|\mp|y|$)
	\begin{multline*} \label{eqn:low tail hi d}
		\frac{1}{2\alpha it} \int_{\lambda \sim \lambda_0} e^{it\lambda^{2\alpha}} \partial_\lambda \bigg(\lambda^{n-2\alpha} e^{i\lambda R}a_{x,y}(\lambda) \bigg)\, d\lambda\\
		=\frac{1}{2\alpha it} \int_{\lambda \sim \lambda_0}  e^{it\phi(\lambda)} \partial_\lambda \bigg(\lambda^{n-2\alpha} a_{x,y}(\lambda) \bigg)\, d\lambda
		+\frac{iR}{2\alpha it} \int_{\lambda \sim \lambda_0}  e^{it\phi(\lambda)}  \lambda^{n-2\alpha} a_{x,y}(\lambda) \, d\lambda
	\end{multline*}
	We will use that (considering cases of $|x|<|y|$ and $|y|<|x|$)
	\be\label{eqn:ax low big n}
	|\partial_\lambda^j a_{x,y}(\lambda)|\les \lambda^{-j}(\lambda \la x\ra +\lambda\la y\ra)^{\f12-\frac{n}{2\alpha}}
	\ee
	to apply Van der Corput to bound by
	\begin{multline*}
		\frac{\lambda_0^{1-\alpha}}{|t|^{\f32}}\bigg(
		\int_{\lambda \sim \lambda_0}|\partial_\lambda^2 (\lambda^{n-2\alpha} a_{x,y}(\lambda) )|\, d\lambda
		+|R|
		\int_{\lambda \sim \lambda_0}  |\partial_\lambda (\lambda^{n-2\alpha} a_{x,y}(\lambda) |\, d\lambda
		\bigg)\\
		\les (\lambda_0 \la x\ra +\lambda_0\la y\ra)^{\f12-\frac{n}{2\alpha}}\bigg( \frac{\lambda_0^{1-\alpha}}{|t|^{\f32}} \lambda_0^{n-1-2\alpha}+\frac{|R|\lambda_0^{1-\alpha}}{|t|^{\f32}} \lambda_0^{n-2\alpha}\bigg)
		\les (\la x\ra +\la y\ra)^{\f12-\frac{n}{2\alpha}} \frac{|R|\lambda_0^{(\frac{n}{2\alpha}-\f32)(2\alpha-1)}}{|t|^{\f32}},
	\end{multline*}	
	using that $\lambda |R|\gg1$ in the last inequality.  Using that $\lambda_0^{2\alpha-1}\sim \frac{|R|}{|t|}$, we may bound by
	$$
		(\la x\ra +\la y\ra)^{\f12-\frac{n}{2\alpha}} \frac{|R|^{(\frac{n}{2\alpha}-\f12)}}{|t|^{\f{n}{2\alpha}}}
		\les |t|^{-\f{n}{2\alpha}},
	$$
	uniformly in $x,y$.
	
	The bounds in \eqref{eqn:ax low big n} follow from Proposition~\ref{prop:rep}, Lemma~\ref{lem:Minv n}, noting that $\frac{n}{2\alpha}-\f12\leq \frac{n-1}{2}$.

	\end{proof}

\section{Spectral characterization}\label{sec:spec}

We have defined zero energy to be regular if the operator $T_0=U+vG_0v$ is invertible on $L^2$ where $G_0$ is the kernel of $[(-\Delta)^{\alpha}]^{-1}$, which is a multiple of $|x-y|^{2\alpha-n}$.  Further, $S_1$ is the projection onto the kernel of $T_0$.  We note that $S_1$ is finite rank since $vG_0v$ is compact.  As usual, we wish to relate the concept of regularity at zero to distributional solutions to $[(-\Delta)^\alpha+V]\psi=0$. 

We must consider cases based on the relative sizes of $\alpha$ and $n$.  
We consider small $n$ first.

\begin{lemma}\label{lem:spec small a}
	
	In dimensions $n\geq2$, fix $\frac{n}{4}\leq \alpha<\frac{n}2$ and assume that $|V(x)|\les \la x\ra^{-\beta}$ for some $\beta>2\alpha$.  Then $\phi\in S_1L^2(\R^n)$ if and only if $\phi=Uv\psi$ for some $\psi\in L^{2,-\alpha-}(\R^n)$ with $[(-\Delta)^\alpha+V]\psi=0$ in the sense of distributions.  Furthermore, $\psi\in L^\infty(\R^n)$.
	
\end{lemma}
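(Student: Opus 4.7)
The plan is to use the identities $T_0=U+vG_0v$ and $V=Uv^2$ (together with $U^2=1$) to build mutually inverse correspondences $\phi\mapsto \psi:=-G_0v\phi$ and $\psi\mapsto\phi:=Uv\psi$ between $S_1L^2(\R^n)$ and the space of distributional zero-energy solutions lying in $L^{2,-\alpha-}(\R^n)$.

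For the direction $(\Rightarrow)$, I start from $\phi\in\ker T_0$, so that $U\phi=-vG_0v\phi$, and set $\psi:=-G_0v\phi$. Since $\beta>2\alpha>n/2$ we have $v\in L^2\cap L^\infty$, hence $v\phi\in L^1\cap L^2$; estimating the Riesz kernel $c|x-y|^{2\alpha-n}$ separately on the near region $|x-y|\le 1$ (Cauchy--Schwarz against $v\phi\in L^2$) and the tail $|x-y|>1$ (bounded kernel against $v\phi\in L^1$) yields $|\psi(x)|\les \la x\ra^{2\alpha-n}$ plus a locally bounded term, which in view of $2\alpha<n$ places $\psi\in L^{2,-\alpha-}$. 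The equation $U\phi=-vG_0v\phi$ forces $\phi$ to vanish where $V=0$, so $Uv\psi=-UvG_0v\phi=U\cdot U\phi=\phi$. Finally $(-\Delta)^\alpha\psi=-v\phi$ by definition of $G_0$, and $V\psi=Uv\cdot(v\psi)=Uv\cdot U\phi=v\phi$, so $[(-\Delta)^\alpha+V]\psi=0$ in $\mathcal{D}'$.

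For $(\Leftarrow)$, given $\psi\in L^{2,-\alpha-}$ with $[(-\Delta)^\alpha+V]\psi=0$ distributionally, I set $\phi:=Uv\psi$; membership $\phi\in L^2$ follows from $\la x\ra^\alpha v(x)\in L^\infty$. The PDE reads $(-\Delta)^\alpha\psi=-V\psi=-v\phi$, and the candidate $\psi_0:=-G_0v\phi$ solves the same equation and lies in $L^{2,-\alpha-}$ by the preceding paragraph. Therefore $u:=\psi-\psi_0\in L^{2,-\alpha-}$ satisfies $(-\Delta)^\alpha u=0$ in $\mathcal{D}'$. Taking Fourier transforms in $\mathcal{S}'$, the relation $|\xi|^{2\alpha}\widehat u=0$ shows $\widehat u$ is supported at the origin, so $u$ is a polynomial; since $\alpha<n/2$, no nonzero polynomial belongs to $L^{2,-\alpha-}$, forcing $\psi=\psi_0$. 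Back-substitution yields $U\phi=v\psi$ and $vG_0v\phi=-v\psi$, hence $T_0\phi=0$ and $\phi\in S_1L^2$. The $L^\infty$ statement follows from the same Riesz-kernel splitting: for $|x-y|\le 1$ Cauchy--Schwarz against $v\phi\in L^2$ works precisely because $\alpha>n/4$ makes $|x-\cdot|^{2\alpha-n}$ square-integrable locally, and for $|x-y|>1$ the kernel is bounded and pairs against $v\phi\in L^1$; the borderline $\alpha=n/4$ is handled by extracting extra integrability from $v\in L^p$ with some $p<2$ (using $\beta>2\alpha$) and invoking Hardy--Littlewood--Sobolev.

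The main obstacle I anticipate is the Liouville-type uniqueness step: for noninteger $\alpha$, one must justify carefully that any distributional solution of $(-\Delta)^\alpha u=0$ with $u$ only in the weighted space $L^{2,-\alpha-}$ is necessarily a polynomial. This requires interpreting $|\xi|^{2\alpha}$ as a tempered multiplier and showing that the equation $|\xi|^{2\alpha}\widehat u=0$ really does concentrate $\widehat u$ at the origin. Once this is secured, everything else reduces to algebraic manipulations involving $U$, $v$, $V$, $G_0$ and to standard weighted Riesz-potential bounds.
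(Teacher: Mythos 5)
Your algebraic skeleton (the correspondences $\psi=-G_0v\phi$, $\phi=Uv\psi$, and the use of $U^2=I$, $V=Uv^2$) is exactly the paper's, and your explicit Liouville step — showing any $u\in L^{2,-\alpha-}$ with $(-\Delta)^\alpha u=0$ is a polynomial, hence zero — is a careful spelling-out of an equivalence the paper only asserts ("$[I+G_0V]\psi=0$ is equivalent to $[(-\Delta)^\alpha+V]\psi=0$"). The problem is in the analytic estimates, where your argument has genuine gaps. The central one: you claim that $\beta>2\alpha>n/2$ gives $v\in L^2\cap L^\infty$ and hence $v\phi\in L^1\cap L^2$. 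This is false under the lemma's hypotheses: $v\les\la x\ra^{-\beta/2}$ lies in $L^2(\R^n)$ only when $\beta>n$, and here one only has $\beta>2\alpha$ with $2\alpha<n$ (e.g.\ $n=3$, $\alpha=1.4$, $\beta=2.9$). The far-region pairing "bounded kernel against $v\phi\in L^1$" therefore has no justification, and the same defect kills your proposed repair of the borderline case $\alpha=n/4$, where you want $v\in L^p$ for some $p<2$: that again forces $\beta>n$. Separately, even if your pointwise bound "$\la x\ra^{2\alpha-n}$ plus a bounded term" were established, boundedness alone does \emph{not} place a function in $L^{2,-\alpha-}$ when $2\alpha<n$ (one needs weight exponent exceeding $n/2$), so the conclusion $\psi\in L^{2,-\alpha-}$ does not follow as stated; you would have to track actual decay of the near-region piece (which one can do, using the decay of $v$, but only for $\alpha>n/4$, since at $\alpha=n/4$ the kernel $|x-y|^{-n/2}$ is not locally square integrable and a single Cauchy--Schwarz fails).

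The paper sidesteps all of this by not working pointwise for the weighted-space claim: it observes $v\phi\in L^{2,\beta/2}$ and invokes the mapping property of the fractional integral operator (Lemma~2.3 of \cite{Jen}), $I_{2\alpha}:L^{2,s}\to L^{2,-s'}$ for $s,s'>2\alpha-\frac n2$ and $s+s'>2\alpha$, choosing $s'=\alpha+$; this yields $\psi\in L^{2,-\alpha-}$ under exactly $\beta>2\alpha$, uniformly over the whole range $\frac n4\le\alpha<\frac n2$. For the $L^\infty$ claim the paper uses Cauchy--Schwarz with the kernel $G_0(x,\cdot)V(\cdot)$ when $\alpha>\frac n4$, and at the endpoint $\alpha=\frac n4$ it iterates the identity once, $\psi=(G_0V)^2\psi$, bounding $|G_0VG_0(x,y)|$ via the convolution estimate of Lemma~6.3 in \cite{EG1} so that the composed kernel is square integrable in $y$ uniformly in $x$. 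To make your write-up correct you would need to (i) replace the $L^1$-based far-field estimate by a weighted Cauchy--Schwarz exploiting $|V|\les\la x\ra^{-\beta}$ (or simply quote the $I_{2\alpha}$ weighted bound), (ii) track decay rather than mere boundedness when concluding $\psi\in L^{2,-\alpha-}$, and (iii) handle $\alpha=n/4$ by an iteration of the kind the paper uses, since no single application of Cauchy--Schwarz or HLS with the available integrability of $v$ works there.
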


\begin{proof}
	
	We first assume that $\phi\in S_1L^2(\R^n)$, then using $U^2=I$ we have
	$$
		(U+vG_0v)\phi=0, \qquad \Rightarrow \qquad \phi=-UvG_0v\phi. 
	$$	
	We define $\psi=-G_0v\phi$, which implies  $\phi=Uv\psi$.  In particular, this shows that $\psi=-G_0V\psi$ hence $[I+G_0V]\psi=0$, which is equivalent to $[(-\Delta)^\alpha+V]\psi=0$ in the sense of distributions. 
	We now show that $\psi\in L^{2,-\alpha-}$.  By definition we have
	$$
		\psi(x)=-G_0v\phi(x)=-c_{n,\alpha} \int_{\R^n} \frac{v(y)\phi(y)}{|x-y|^{n-\alpha}}\, dy.
	$$
	In particular, $G_0$ is a scalar multiple of the fractional integral operator $I_{2\alpha}$.  By Lemma~2.3 in \cite{Jen}, we have that $I_{2\alpha}:L^{2,s}\to L^{2,-s'}$ provided that $s,s'>2\alpha-\frac{n}2$ and $s+s'>2\alpha$.  Since $\phi\in L^2$, we have that $v\phi \in L^{2,\frac{\beta}2}$, since $\alpha<\frac{n}2$ we have $2\alpha-\frac{n}2<\alpha$, and hence $\frac{\beta}{2}>2\alpha-\frac{n}{2}$. This implies $G_0v\phi\in L^{2,-s'}$ for some $s'>2\alpha-\frac{n}{2}$, and since $\alpha<\frac{n}2$ we may select $s'=\alpha+$ and conclude that $\psi\in L^{2,-\alpha-}$ as desired provided that $\beta>2\alpha$.
	
	Now, assuming that $\psi\in L^{2,-\alpha-}$ satisfies $[(-\Delta)^\alpha+V]\psi=0$ in the sense of distributions, and let $\phi=Uv\psi$.  Then,
	$$
		(U+vG_0v)\phi=v\psi+vG_0V\psi=v[I+G_0V]\psi=0.
	$$
	Hence $\phi\in S_1L^2(\R^n)$.
	
	Now, to show that $\psi\in L^\infty$ we first consider when $\frac{n}{4}<\alpha<\frac{n}2$, then $0<n-2\alpha < \frac{n}{2}$, so that $G_0(x,y)$ is a locally $L^2$ function of $y$ uniformly in $x$.  Further, under the decay conditions we have
	$$
		|\psi(x)|\les \int_{\R^n}|G_0(x,y)V(y)|\, |\psi(y)| \, dy
		\les \|G_0(x,\cdot)V(\cdot)\|_2 \|\psi\|_2
	$$
	uniformly in $y$, hence $\psi\in L^\infty$ as claimed.
	
	When $\alpha=\frac{n}4$, one needs to iterate the resolvent identity once since $n-2\alpha=\frac{n}2$ in this case.  We write $\psi(x)=G_0VG_0V\psi(x)$ and use Lemma 6.3 in \cite{EG1} to see that
	$$
	|G_0VG_0(x,y)|\les \int_{\R^n} \frac{\la z\ra^{-\beta}}{|x-z|^{n-2\alpha}|z-y|^{n-2\alpha}}\, dz
	\les \la x-y \ra^{2\alpha-n}(|x-y|^{0+}+|x-y|^{0-})
	$$
	since $n+\beta-4\alpha>n-2\alpha$.  It is clear that $(G_0V)^2(x,y)$ is in $L^2$ uniformly in $x$ and the claim follows again by Cauchy-Schwartz.
	
\end{proof}

Heuristically, this allows for the possibility of zero energy resonances when $2\alpha<n\leq 4\alpha$.  When $n>4\alpha$, we expect that no threshold resonances may exist.  For decaying potentials, this is a consequence of the following.

\begin{lemma}\label{lem:spec big a}
	
	In dimensions $n>2$, fix $0< \alpha<\frac{n}4$ and assume that $|V(x)|\les \la x\ra^{-\beta}$ for some $\beta>4\alpha$.  Then $\phi\in S_1L^2(\R^n)$ if and only if $\phi=Uv\psi$ for some $\psi\in L^{2}(\R^n)$ with $[(-\Delta)^\alpha+V]\psi=0$ in the sense of distributions.  Furthermore, $\psi\in L^\infty(\R^n)$.
	
\end{lemma}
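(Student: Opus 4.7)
My plan is to follow the template of Lemma~\ref{lem:spec small a} verbatim for the equivalence between $S_1 L^2$ and distributional solutions, and to replace the single-iteration Cauchy--Schwarz argument for $\psi \in L^\infty$ by a $k$-fold iteration calibrated so that the composed kernel becomes locally $L^2$.

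For the forward direction, given $\phi \in S_1 L^2$ I would set $\psi := -G_0 v\phi$, observe that $T_0 \phi = 0$ combined with $U^2 = I$ yields $\phi = Uv\psi$, and derive $[(-\Delta)^\alpha + V]\psi = 0$ distributionally from $\psi = -G_0 V\psi$ just as in the preceding lemma. The point that differs is the target space for $\psi$: here the claim is $\psi \in L^2$ rather than $L^{2,-\alpha-}$, and this is where the hypotheses $\alpha < n/4$ and $\beta > 4\alpha$ are both used. Since $G_0$ is a constant multiple of the fractional integral $I_{2\alpha}$ and $v\phi \in L^{2,\beta/2}$, I would apply Lemma~2.3 of \cite{Jen} with $s = \beta/2$ and $s' = 0$: the condition $s, s' > 2\alpha - n/2$ holds because $\alpha < n/4$ makes $2\alpha - n/2 < 0$, and the condition $s + s' > 2\alpha$ reduces to $\beta > 4\alpha$. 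The converse direction is handled exactly as in Lemma~\ref{lem:spec small a}: if $\psi \in L^2$ solves $[(-\Delta)^\alpha + V]\psi = 0$ distributionally, then $\phi := Uv\psi$ is automatically in $L^2$ and the computation $(U + vG_0v)\phi = v\psi + vG_0V\psi = v[\psi + G_0V\psi] = 0$ puts $\phi \in S_1 L^2$.

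The main step is the boundedness $\psi \in L^\infty$, and the main obstacle is that in the regime $n > 4\alpha$ the local singularity of $G_0(x,y) = c|x-y|^{2\alpha - n}$ has order $n - 2\alpha > n/2$, so $G_0(x, \cdot)V(\cdot) \notin L^2_y$ and the one-step Cauchy--Schwarz argument used in Lemma~\ref{lem:spec small a} is unavailable. My plan is to iterate the relation $\psi = -G_0 V\psi$ a total of $k$ times, taking $k$ to be the smallest integer with $2k\alpha > n/2$, and to write $\psi = (-1)^k (G_0V)^k\psi$. The composition kernel $K_k(x,y)$ of $(G_0V)^k$ can be estimated by applying Lemma~6.3 of \cite{EG1} inductively: each factor of $G_0V$ reduces the local singularity of the preceding kernel by $2\alpha$ while preserving polynomial decay at infinity, provided the $\langle\cdot\rangle^{-\beta}$ weight from $V$ dominates the growth picked up at each stage. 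By the choice of $k$, the local singularity of $K_k(x,y)$ at $x = y$ becomes strictly milder than $|x-y|^{-n/2}$, hence is locally $L^2$; together with the decay at infinity this gives $K_k(x,\cdot) \in L^2_y$ uniformly in $x$, and a single Cauchy--Schwarz against $\psi \in L^2$ yields $\|\psi\|_\infty \lesssim \|\psi\|_2 < \infty$.

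The delicate part is the bookkeeping of decay rates through the iteration, making sure that the weighted $L^\infty$ bound of Lemma~6.3 in \cite{EG1} applies at each of the $k-1$ intermediate stages. The hypothesis $\beta > 4\alpha$ should provide enough room to run the induction, and any logarithmic loss arising at exceptional parameters $2j\alpha = n$ for some $1 \leq j < k$ can be absorbed into an arbitrarily small polynomial factor. This is the same structure seen in the $\alpha = n/4$ case of Lemma~\ref{lem:spec small a}, simply iterated further; I do not expect any new phenomena to appear, only a more tedious induction.
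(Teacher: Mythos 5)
Your proposal is correct and is essentially the paper's own argument: the equivalence is run exactly as in Lemma~\ref{lem:spec small a} with $\psi=-G_0v\phi\in L^2$ obtained from Jensen's Lemma~2.3 (your $s=\beta/2$, $s'=0$ choice is precisely the paper's use of $I_{2\alpha}:L^{2,s}\to L^2$ for $s>2\alpha$, needing $\beta>4\alpha$), and the $L^\infty$ bound comes from iterating $\psi=-G_0V\psi$ and estimating the composed kernel inductively via Lemma~6.3 of \cite{EG1}, each factor of $G_0V$ reducing the local singularity by $2\alpha$, until the kernel is in $L^2_y$ uniformly in $x$ and Cauchy--Schwarz applies. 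The only difference is cosmetic (your minimal choice of $k$ with $2k\alpha>n/2$ versus the paper's more generous $k\gtrsim n/(2\alpha)$, and your explicit mention of possible logarithmic corrections, which the paper likewise absorbs into $|x-y|^{0\pm}$ factors).
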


\begin{proof}
	
	As in the previous proof, if $\phi \in S_1L^2(\R^n)$ then $\psi=-G_0v\phi$ is a distributional solution of $H\psi=0$.  Since $n>4\alpha$, by Lemma~2.3 in \cite{Jen} the operator $I_{2\alpha}:L^{2,s}\to L^2$ provided $s>2\alpha$.  Hence, $\psi=-G_0v\phi\in L^2$ provided $\beta>4\alpha$.
	
	The claim that $\psi\in L^\infty$ follows by iterating the identity $\psi=-G_0V\psi$ to write $\psi=[-G_0V]^k\psi$ for a sufficiently large $k$ depending on $n$ and $\alpha$.  Then, using Lemma 6.3 in \cite{EG1} one has
	$$
	|G_0VG_0(x,y)|\les \int_{\R^n} \frac{\la z\ra^{-\beta}}{|x-z|^{n-2\alpha}|z-y|^{n-2\alpha}}\, dz
	\les \left\{ 
	\begin{array}{ll}
		(\frac{1}{|x-y|})^{\max(0,n-4\alpha)} & |x-y|\leq 1\\
		(\frac{1}{|x-y|})^{\min(n-2\alpha,n+\beta-4\alpha)} & |x-y|>1	
	\end{array}	\right.
	$$
	As before, $n+\beta-4\alpha>n-2\alpha$, so one lessens the local singularity by a factor of $2\alpha$ without diminishing the decay for large $|x-y|$ after applying $G_0V$.  More succinctly,
	$$
	|G_0VG_0(x,y)|\les \int_{\R^n} \frac{\la z\ra^{-\beta}}{|x-z|^{n-2\alpha}|z-y|^{n-2\alpha}}\, dz
	\les 
	\frac{\la x-y\ra^{2\alpha-n+ \max(0,n-4\alpha)} }{|x-y|^{\max(0,n-4\alpha)}}
	$$
	One can iterate by noting that $\la z_1\ra^{-\beta}\la z_1-y\ra^{-\gamma}\les \la z_1\ra ^{-(\beta+\gamma)}+\la z_1-y\ra ^{-(\beta+\gamma)}$ so that
	\begin{multline*}
		|(G_0V)^2G_0(x,y)|\les \int_{\R^n} \frac{\la z_1\ra^{-\beta}\la z_1-y\ra^{{2\alpha-n+ \max(0,n-4\alpha)}}}{|x-z_1|^{n-2\alpha} |z_1-y|^{\max(0,n-4\alpha)} } \,dz_1 \\
		\les   	\int_{\R^n} \frac{\la z_1\ra^{-\beta+2\alpha-n+ \max(0,n-4\alpha)}}{|x-z_1|^{n-2\alpha} |z_1-y|^{\max(0,n-4\alpha)} } \,dz_1+ \int_{\R^n} \frac{\la z_1-y\ra^{-\beta+2\alpha-n+ \max(0,n-4\alpha)}}{|x-z_1|^{n-2\alpha} |z_1-y|^{\max(0,n-4\alpha)} } \,dz_1\\
		\les \frac{ \la x-y\ra^{2\alpha-n+\max(0,n-6\alpha)} }{|x-y|^{\max(0,n-6\alpha)}}.
	\end{multline*}
	Where we apply Lemma 6.3 of \cite{EG1}  on each integral, using the change of variables $z_1\mapsto z_1+y$ in the second integral.
	Iterating $k$ times, we arrive at
	$$
	|(G_0V)^kG_0(x,y)|\les \frac{ \la x-y\ra^{2\alpha-n+\max(0,n-(j+1)\alpha)} }{|x-y|^{\max(0,n-(k+1)\alpha)}}.
	$$
	For $k\geq\lceil \frac{n}{2\alpha}\rceil$, one sees that  $(G_0)^kG_0(x,y)$ is an $L^2$ function of $y$ uniformly in $x$. Hence for $k>\lceil \frac{n}{2\alpha}\rceil$
	$$
	|\psi(x)|=|(G_0V)^kG_0\psi(x)|\les \int_{\R^n} |(G_0V)^kG_0(x,y)|\, |\psi(y)|\, dy\les \|(G_0V)^kG_0(x,\cdot)\|_2 \|\psi\|_2,
	$$
	proving the claim.
	
\end{proof}

\end{document}